\newcolumntype{x}[1]{>{\centering\arraybackslash\hspace{0pt}}p{#1}}
\theoremstyle{definition}
\newtheorem{theorem}{Theorem}[section]
\newtheorem{definition}[theorem]{{{Definition}}}
\newtheorem{example}[theorem]{{{Example}}}
\newtheorem{remark}[theorem]{{{Remark}}}
\newtheorem{corollary}[theorem]{{{Corollary}}}
\newtheorem{proposition}[theorem]{{{Proposition}}}
\newtheorem{lemma}[theorem]{{{Lemma}}}
\newcommand{\numberset}{\mathbb}
\newcommand{\N}{\numberset{N}}
\newcommand{\Z}{\numberset{Z}}
\newcommand{\Q}{\numberset{Q}}
\newcommand{\R}{\numberset{R}}
\newcommand{\C}{\mathcal{C}}
\newcommand{\F}{\numberset{F}}
\newcommand{\mS}{\mathcal{S}}
\newcommand{\mC}{\mathcal{C}}
\newcommand{\mI}{\mathcal{I}}
\newcommand{\sH}{\sigma}
\newcommand{\mV}{\mathcal{V}}
\newcommand{\D}{{\rm D}}
\newcommand{\dd}{{\rm d}}
\newcommand{\Cl}{{\rm Cl}}
\newcommand{\mH}{\mathcal{H}}
\newcommand{\Fq}{\F_q}
\DeclareMathOperator{\PG}{PG}
\DeclareMathOperator{\PGL}{PGL}
\title[The geometry of intersecting codes]{The geometry of intersecting codes and applications to additive combinatorics and factorization theory}
\author{Martino Borello$^{[1,2]}$}
\author{Wolfgang Schmid$^{[1]}$}
\author{Martin Scotti$^{[1]}$}
\address{$^{[1]}$Universit\'e Paris 8, Laboratoire de G\'eom\'etrie, Analyse et Applications, LAGA, Universit\'e Sorbonne Paris Nord, CNRS, UMR 7539, France.}
\address{$^{[2]}$ INRIA Saclay \& LIX, CNRS UMR 7161, École Polytechnique,  France}
\email{martino.borello@univ-paris8.fr, martin.scotti@etud.univ-paris8.fr}
\email{wolfgang.schmid@univ-paris8.fr}
\begin{document}

\begin{abstract}
Intersecting codes are linear codes where every two nonzero codewords have non-trivially intersecting support. In this article we expand on the theory of this family of codes, by showing that nondegenerate intersecting codes correspond to sets of points (with multiplicites) in a projective space that are not contained in two hyperplanes. 
This correspondence allows the use of geometric arguments to demonstrate properties and provide constructions of intersecting codes. We improve on existing bounds on their length and provide explicit constructions of short intersecting codes. Finally, generalizing a link between coding theory and the theory of the Davenport constant (a combinatorial invariant of finite abelian groups), we provide new asymptotic bounds on the weighted $2$-wise Davenport constant. These bounds then yield results on factorizations in rings of algebraic integers and related structures.

\bigskip

\noindent \textbf{Keywords:} Intersecting codes; Projective systems; Zero-sum problem; Factorization\\
\textbf{MSC2020:} 51E20, 94B27, 05B25, 11P70, 11R29
\end{abstract}

\maketitle

{
  \hypersetup{linkcolor=black}
  \tableofcontents
}

\section*{Introduction}

Intersecting codes are linear codes for which  every two nonzero codewords have non-trivially intersecting support. 
Intersecting codes are a classical object of study in coding theory introduced in \cite{miklos1984linear,katona1983minimal} and subsequently investigated in many articles (see for example \cite{CL,CZ,retter1989intersecting,cohnen2003intersecting}), but with a primary focus on the binary case. In this case, such codes coincide with minimal codes, which have been intensively studied in the last 20 years. Several practical applications of intersecting and minimal codes are known: they allow communication over AND channels, they may be used in secret sharing schemes, and they are related to other structures such as frameproof codes \cite{blackburn2003frameproof} and $(2,1)$-separating systems \cite{randriambololona20132}. In this article, we primarily focus on the geometric interpretation of intersecting codes, which has remained completely unexplored to date, and to the interactions of these objects with other areas of mathematics, in particular additive combinatorics and algebraic number theory. 

It is well-known that a nondegenerate linear code can be associated with a set of points (with multiplicities) in a projective space and some coding-theoretical properties can be interpreted geometrically. This view is what connects MDS codes to problems with arcs in projective spaces (the famous MDS conjecture was initially formulated as a problem in projective geometry in \cite{segre1955curve}), covering problems to saturating sets, minimal codes to strong blocking sets, etc. Intersecting codes correspond to sets of points that are not contained in any pair of hyperplanes. We will refer to such sets as \emph{non-$2$-cohyperplanar}. This geometric interpretation of intersecting codes allows us to visualize some fundamental properties, but above all, it allows for the introduction of new constructions.

It is clear from the definition of non-2-cohyperplanar sets that adding a point to these sets leaves them non-2-cohyperplanar. Hence, it is fundamental, for constructing purposes, to investigate small sets with this property, possibly minimal with respect to inclusion. We prove some lower bounds on the cardinality of non-2-cohyperplanar sets and a probabilistic existence results. For some low parameters, we provide constructions of the smallest non-2-cohyperplanar sets. We revisit a property proven in \cite{randriambololona20132} regarding intersecting AG codes, along with the concatenation method, to provide explicit constructions of short intersecting codes over any finite field. Quite surprisingly, these explicit constructions improve the probabilistic bound in many cases (more precisely, they almost always improve it for codes over non-prime fields). Moreover, in the binary case it provides the shortest known explicit construction of intersecting codes, which in this case are minimal. Furthermore, we introduce an explicit construction, based on the very recent paper \cite{alon2023strong}, stemming from the union of projective lines, which employs a sufficient geometrical condition called the avoidance property (introduced in \cite{Fancsali}) and some expander graphs. 

The last part of the paper is devoted to the interpretation of our results to additive combinatorics and factorization theory. In particular, continuing the research along the path traced by \cite{SP,multiwise}, we link the theory of intersecting codes to the one of weighted Davenport constants. Actually, the value of this constant is strictly related to 
the function describing the length of the shortest intersecting code for a given dimension and base field, as we will show in Theorem \ref{thm:link}. We will then show the impact of the results on this function and of the explicit constructions of the previous sections on the knowledge of the weighted Davenport constants. Finally, we will explore the connection with factorization theory in algebraic number fields (and more generally certain Dedekind domains and Krull monoids). In particular, at the very end of the paper, we will explore the interplay between problems about intersecting codes over non-prime fields and  Hilbert's ramification theory of some particular number fields with elementary abelian class group. 

\medskip

\noindent \textbf{Outline: } In Section \ref{sec:coding}, we delve into the definition and fundamental properties of intersecting codes. Section \ref{sec:geo} explores the geometry of intersecting codes, particularly showcasing their correspondence with non-$2$-cohyperplanar sets and highlighting their properties and examples. Section \ref{sec:size} addresses the size of small non-$2$-cohyperplanar sets, or equivalently, the length of short intersecting codes, providing both lower and upper bounds. Section \ref{sec:costructions} presents constructions utilizing AG codes and expander graphs. Finally, the paper concludes with applications of the preceding results to the theory of Davenport constants (Section \ref{sec:additive}) and to factorization theory (Section~\ref{sec:factorization}).

\bigskip

\section{Intersecting codes: definitions and fundamental properties}\label{sec:coding}

Throughout the paper $q$ will be a prime power, $\F_q$ will be a finite field of order $q$, and we will endow the vector space $\Fq^{n}$ with the Hamming metric, defined as follows: the \emph{support} of a vector $x \in \Fq^{n}$ is
$$\sigma(x) = \{ i \mid x_{i} \neq 0 \},$$
and its \emph{Hamming weight} is
$${\rm w}(x) = \# \sigma(x).$$
The \emph{Hamming distance} is then defined as ${\rm d}(x, y) = {\rm w}(x-y)$, for $x,y\in \Fq^{n}$.

We start with some classical fundamental definitions.

\begin{definition}
A (linear) \emph{code} $\mathcal{C}$ over a finite field $\Fq$ is a subspace of $\Fq^{n}$. We denote its \emph{dimension} $k = \dim(\mathcal{C})$, and its \emph{minimum distance} $d = {\rm d}(\mathcal{C}) = \min_{c,c' \in \mathcal{C}, c\neq c'} {\rm d}(c,c')$.
We say that a code of dimension $k$ and minimum distance $d$ over $\Fq^{n}$ is an $[n, k, d]_{q}$-code (or an $[n,k]_q$ code if the minimum distance is unknown).\\
A \emph{generator matrix} of an $[n, k, d]_{q}$-code $\mathcal{C}$ is a matrix $G \in \Fq^{k\times n}$ such that $\mathcal{C} = \text{rowsp} (G)$. A code is said \emph{nondegenerate} if no column of $G$ is the zero vector. A code is said \emph{projective} if there are no two linearly dependent columns in $G$. It is straightforward to observe that these last two properties do not depend on the chosen generator matrix. A \emph{parity-check matrix} of an $[n, k, d]_{q}$-code $\mathcal{C}$ is a matrix $H \in \Fq^{(n-k)\times n}$ such that $\mathcal{C}=\{v\in \F_q^n\mid Hv^T={\bf 0}\}$.
\\
Two $[n, k, d]_{q}$-codes $\mathcal{C}$ and $\mathcal{C}'$ are \emph{equivalent} if there is a linear isometry from $\Fq^{n}$ to itself mapping $\mathcal{C}$ to $\mathcal{C}'$ (actually, it is easy to prove that such an isometry should be a monomial transformation).
\end{definition}

The \emph{Singleton bound} is a fundamental concept in coding theory, which provides a relation between the parameters of a code: if $\C$ is an $[n,k,d]_q$ code, then 
$$d\leq n-k+1.$$
If equality holds, the code is called \emph{Maximum Distance Separable} (MDS).

\begin{definition}
A family of codes $\mathcal{F}$ over $\Fq$ is said to be \emph{asymptotically good} if there is an $\varepsilon > 0$ and a sequence of codes $\C_{s} \in \mathcal{F}$ with parameters $[n_{s}, k_{s}, d_{s}]_q$ such that $n_{s} \rightarrow \infty$, as well as both
$$\liminf_{s\rightarrow \infty} \frac{k_{s}}{n_{s}} \geq \varepsilon \quad \text{and} \quad \liminf_{s\rightarrow \infty} \frac{d_{s}}{n_{s}} \geq \varepsilon.$$
\end{definition}

The main object of this paper is the following.

\begin{definition}
A code is called \textit{intersecting} if for any two nonzero codewords the intersection of their supports is nonempty.
\end{definition}

Intersecting codes were first introduced in \cite{miklos1984linear, katona1983minimal} and generalized in \cite{CL} to the case of two distinct codes $\C_{1}$ and $\C_{2}$ for which all codewords $c_{1} \in \C_{1}$ and $c_{2} \in \C_{2}$ share a nonzero coordinate. They have been further investigated in \cite{moreno1992exponential,sloane1993covering,retter1989intersecting,retter1991average}.
In \cite{CZ}, Cohen and Z\'emor provide constructions of asymptotically good intersecting codes and they examine the case where the intersection must have a specific size. All of these contributions focus mainly on the binary case. A study for the case when the base field is of prime cardinality is done (implicitly) in \cite{multiwise}.  As we will see, a particularly relevant construction of explicit families of asymptotically good intersecting codes is provided in \cite{randriambololona20132}.

\medskip

It is useful to define the concatenation of two codes to construct new intersecting codes from old ones.

\begin{definition}
Let $\mI$ be an $[n, k, d]_q$ code and $\mathcal{C}$ be a $[N, K, D]_{q^k}$ code.
Let $\varphi : \Fq^{k} \rightarrow \Fq^{n}$ be a linear map such that ${\rm Im}(\varphi) = \mI$. The \emph{concatenation} of $\mI$ and $\mC$ by $\varphi$ is noted $\mI \square_\varphi \mC$ and is defined as
$$\mI \square_\varphi \mC = \{(\varphi(c_{1}), \dots, \varphi(c_{N})) \mid (c_{1}, \dots, c_{N}) \in \mathcal{C}\}.$$
\end{definition}

It is easy to prove that the code $\mI \square_\varphi \mC$  has parameters $[Nn, Kk, \geq Dd]_q$, for any $\varphi$. In the following, all considered properties will not depend on $\varphi$. Therefore, we will simply denote the concatenation with $\mI \square \mC$.

The following is a straightforward result, which appears also in \cite[Lemma 4.1.]{brassard1996oblivious} and which is used implicitly in \cite{crepeau1993efficient}.

\begin{lemma}\label{lem:concatenation}
Let $\C$ be an intersecting $[N, K, D]_{q^k}$ code and $\mI$ an intersecting $[n, k, d]_q$ code. 
Then $\mI \square \mC$ is an intersecting $[Nn, Kk, \geq Dd]_q$ code.
\end{lemma}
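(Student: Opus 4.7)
The plan is to unwind the definitions and use the intersecting property at two levels: first of $\mC$ to locate a common nonzero block, then of $\mI$ to locate a common nonzero coordinate inside that block. Concretely, I would take two nonzero codewords of $\mI \square_\varphi \mC$, write them as $(\varphi(c_1),\dots,\varphi(c_N))$ and $(\varphi(c'_1),\dots,\varphi(c'_N))$ coming from nonzero codewords $(c_1,\dots,c_N), (c'_1,\dots,c'_N) \in \mC$, and show their supports meet.

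The key observation I would use is that $\varphi : \F_q^k \to \F_q^n$ is injective (since $\dim \mI = k$ and $\operatorname{Im}\varphi = \mI$), after identifying $\F_{q^k}$ with $\F_q^k$ as $\F_q$-vector spaces so that the $c_i$ may be regarded as inputs to $\varphi$. Consequently $\varphi(c_i) = 0$ if and only if $c_i = 0$, so the block-level support of the concatenated word equals the coordinate support of the underlying $\mC$-codeword.

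Next I would invoke the intersecting hypothesis on $\mC$: the two underlying codewords, being nonzero in $\mC$, share some nonzero coordinate index $i_0$, that is $c_{i_0} \neq 0$ and $c'_{i_0} \neq 0$. By the previous step, $\varphi(c_{i_0})$ and $\varphi(c'_{i_0})$ are both nonzero codewords of $\mI$. Now I apply the intersecting hypothesis on $\mI$ to find a coordinate $j$ with $(\varphi(c_{i_0}))_j \neq 0$ and $(\varphi(c'_{i_0}))_j \neq 0$. The coordinate corresponding to the pair $(i_0,j)$ in $\mI \square_\varphi \mC$ then lies in the intersection of the two supports, proving the intersecting property. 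The parameter claim $[Nn, Kk, \geq Dd]_q$ is the standard concatenation bound already noted before the lemma, so I would simply cite it.

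I do not expect any serious obstacle here; the statement is essentially a two-line verification once the identification $\F_{q^k} \cong \F_q^k$ used to define $\varphi$ on $\mC$-coordinates is made explicit. The only subtlety worth flagging is the independence from $\varphi$: the argument only uses that $\varphi$ is injective with image $\mI$, so the same proof works for every admissible choice, consistent with the paper's convention of writing $\mI \square \mC$.
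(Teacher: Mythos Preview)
Your argument is correct and is exactly the natural two-step verification: use the intersecting property of $\mC$ to find a common nonzero block, then the intersecting property of $\mI$ to find a common nonzero coordinate inside that block, relying on the injectivity of $\varphi$. The paper itself does not give a proof of this lemma; it simply labels the result as straightforward and cites prior work, so there is nothing further to compare.
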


A family directly related to intersecting codes is that of minimal codes. Let us start from their definition.

\begin{definition}
Let $\mC$ be a linear code over $\F_q$.
A nonzero codeword $c \in \mC$ is called \emph{minimal} if for every codeword $c^\prime \in \mC$ such that $\sigma(c^\prime) \subseteq \sigma(c)$, there exists some $\lambda\in\F_q^\ast$ such that $c^\prime=\lambda c$.\\ 
A code is called \emph{minimal} if all its nonzero codewords are minimal.
\end{definition}

Minimal codewords in linear codes were initially investigated in the context of decoding algorithms \cite{MR551274} and have been employed by Massey \cite{Massey} to define the access structure in his code-based secret sharing scheme. The work in \cite{ashikhmin1998minimal} introduced what is now known as the \emph{Ashikhmin-Barg condition}, which serves as a sufficient criterion for code minimality and has been widely utilized in code constructions. In \cite{chabanne2013towards}, minimal codes are investigated in the context of secure two-party computation. Recent research has particularly focused on the parameters of minimal codes, see \cite{ABNgeo, alfarano2022three, chabanne2013towards, cohen2013minimal, lu2021parameters} and short constructions \cite{alon2023strong,bartoli2023small}. As shown in the following straightforward result, minimal codes constitute a subfamily of intersecting codes and they coincide in the binary case.

\begin{lemma}\label{lem:intermin}
Every minimal code is intersecting. Every binary intersecting code is minimal.
\end{lemma}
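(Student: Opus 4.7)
The plan is to prove the two implications separately, using the definitions of minimality and of the intersecting property essentially directly, together with the simple observation that in characteristic $2$ supports behave very rigidly under addition.

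For the first implication, suppose $\mathcal{C}$ is minimal and take two nonzero codewords $c_1, c_2 \in \mathcal{C}$ with $\sigma(c_1) \cap \sigma(c_2) = \emptyset$; I want a contradiction. Since the supports are disjoint and both codewords are nonzero, $c_1 + c_2$ is nonzero and $\sigma(c_1 + c_2) = \sigma(c_1) \sqcup \sigma(c_2)$. In particular $\sigma(c_1) \subseteq \sigma(c_1+c_2)$, so minimality of $c_1+c_2$ yields $c_1 = \lambda(c_1+c_2)$ for some $\lambda \in \F_q^\ast$. Rearranging gives $(1-\lambda)c_1 = \lambda c_2$; since $\lambda \neq 0$ and $c_2 \neq 0$, this forces $c_2$ to be a nonzero scalar multiple of $c_1$, so $\sigma(c_2) = \sigma(c_1) \neq \emptyset$, contradicting disjointness.

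For the second implication, let $\mathcal{C} \subseteq \F_2^n$ be intersecting, take any nonzero $c \in \mathcal{C}$, and let $c' \in \mathcal{C}$ satisfy $\sigma(c') \subseteq \sigma(c)$; I must show $c' \in \{0,c\}$. Assume $c' \neq 0$. In characteristic $2$ we have the clean identity $\sigma(c + c') = \sigma(c) \triangle \sigma(c')$, and using $\sigma(c') \subseteq \sigma(c)$ this simplifies to $\sigma(c+c') = \sigma(c) \setminus \sigma(c')$. Hence $\sigma(c+c') \cap \sigma(c') = \emptyset$. Since $\mathcal{C}$ is intersecting and $c'$ is nonzero, the only way out is $c + c' = 0$, i.e.\ $c' = c$. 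Thus every nonzero codeword of $\mathcal{C}$ is minimal.

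There is no genuine obstacle here: both parts follow from a one-line manipulation of supports. The only subtlety worth highlighting is that the second implication really does use $q = 2$, because the equality $\sigma(c+c') = \sigma(c) \triangle \sigma(c')$ (and hence the reduction to disjoint-support configurations) fails in general characteristic, where cancellation on the shared support can produce a third support pattern. This asymmetry is exactly what makes the converse inclusion (intersecting $\Rightarrow$ minimal) fail over larger fields, even though the proof of the forward inclusion is field-independent.
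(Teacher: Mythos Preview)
Your proof is correct and follows essentially the same approach as the paper: in both directions the key step is to form the sum of two suitable codewords and observe what happens to the supports. The only differences are cosmetic---you argue the second implication directly while the paper argues the contrapositive, and you spell out more carefully the scalar manipulation in the first implication---but the underlying idea is identical.
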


\begin{proof}
The first part is straightforward: if there are two codewords with non-intersecting support, their sum is a nonzero codeword which is not minimal.\\
Now consider the binary case. If the code is not minimal, then there are two different nonzero codewords $c,c'$ such that $\sigma(c)\subseteq \sigma(c')$. Hence, $\sigma(c+c')\cap \sigma(c')=\emptyset$, so that the code is not intersecting.
\end{proof}

Another family of related codes is that of outer minimal codes, very recently introduced in \cite{ABN2023}.

\begin{definition}
   Let $\mC$ be an $[N,K]_{q^k}$ code. A nonzero codeword $c\in \mC\subseteq  \F_{q^k}^N$ is called ($q$-)\emph{outer minimal}, if, for all $c'\in \mC$, \
     \[\sH(c')\subseteq \sH(c) \wedge \forall i\in \sH(c), \exists \lambda_i\in \F_q \text{ s.t. } c'_i=\lambda_i c_i \ \ \ \Longrightarrow \ \ \ \exists \lambda\in \F_q \text{ s.t. } c'=\lambda c.\]
A code is called ($q$-)\emph{outer minimal} if all its nonzero codewords are ($q$-)outer minimal.
\end{definition}

As shown in \cite{ABN2023}, any outer minimal code concatenated with a minimal code yields a minimal code. This allows to construct short minimal codes and prove some optimal existence results of short minimal codes. Quite unexpectedly, the $2$-outer minimal codes are precisely the intersecting codes.

\begin{proposition}\label{pro:binaryouterminimal}
Let $\mC$ be an $[N,K]_{2^k}$ code. Then $\mC$ is $2$-outer minimal if and only if it is intersecting.
\end{proposition}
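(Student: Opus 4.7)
The plan is to prove both implications directly, exploiting that in characteristic two the only elements of $\F_2$ are $0$ and $1$, so the condition ``$c'_i = \lambda_i c_i$ with $\lambda_i \in \F_2$'' forces each coordinate of $c'$ (on the support of $c$) to be either $0$ or $c_i$. This immediately suggests comparing $c'$ with $c+c'$.

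First I would prove that intersecting implies $2$-outer minimal. Take a nonzero $c \in \mC$ and a $c' \in \mC$ with $\sH(c') \subseteq \sH(c)$ and, for each $i \in \sH(c)$, $c'_i = \lambda_i c_i$ for some $\lambda_i \in \F_2$. Inspect the codeword $c + c' \in \mC$: on $\sH(c)$, the coordinate $(c+c')_i$ equals $c_i$ when $\lambda_i = 0$ and equals $2 c_i = 0$ when $\lambda_i = 1$; outside $\sH(c)$ both terms vanish. Hence $\sH(c+c') = \{i \in \sH(c) : c'_i = 0\}$ while $\sH(c') = \{i \in \sH(c) : c'_i = c_i\}$, and these sets are disjoint. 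Since $\mC$ is intersecting, the two codewords $c+c'$ and $c'$ cannot both be nonzero; this forces either $c' = 0$ (take $\lambda = 0$) or $c' = c$ (take $\lambda = 1$), which is the conclusion.

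For the converse, I would argue by contradiction. Suppose $\mC$ is $2$-outer minimal but not intersecting, so there exist nonzero $c_1, c_2 \in \mC$ with $\sH(c_1) \cap \sH(c_2) = \emptyset$. Set $c \defi c_1 + c_2$ and $c' \defi c_1$; then $\sH(c) = \sH(c_1) \sqcup \sH(c_2)$ and $\sH(c') \subseteq \sH(c)$. For $i \in \sH(c_1)$ we have $c_i = c_{1,i}$ and $c'_i = c_{1,i} = 1 \cdot c_i$, whereas for $i \in \sH(c_2)$ we have $c'_i = 0 = 0 \cdot c_i$, so the hypothesis of $2$-outer minimality is satisfied with $\lambda_i \in \F_2$. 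Hence $c' = \lambda c$ for some $\lambda \in \F_2$. But $\lambda = 0$ gives $c_1 = 0$ and $\lambda = 1$ gives $c_2 = 0$, both contradictions. Therefore $\mC$ must be intersecting.

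The key observation that makes everything routine is that $\F_2$ has only two elements, so the outer-minimality condition degenerates into a combinatorial statement about supports rather than a genuine linear condition; there is no real obstacle beyond carefully tracking which coordinates vanish in characteristic two. The proof would not go through over $\F_{q}$ with $q > 2$, which explains why the equivalence is specific to the binary base field (consistent with Lemma~\ref{lem:intermin}).
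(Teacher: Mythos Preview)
Your proof is correct and follows essentially the same approach as the paper: in both directions the key move is to form the sum of the two codewords in question and observe that, because the coordinate-wise scalars lie in $\F_2$, the resulting pair has disjoint supports. The only cosmetic difference is notation (the paper uses $c$ and $c+c'$ for the pair in the converse direction where you use $c_1+c_2$ and $c_1$), and you spell out the case $c'=0$ explicitly where the paper tacitly excludes it.
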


\begin{proof}
Suppose that $\mC$ is intersecting. Let $c,c'\in \mC$ nonzero codewords such that $\sH(c')\subseteq \sH(c)$ and $c'_i=c_i$, for all $i\in \sH(c')$. Then $c'= c$, otherwise their sum would be a nonzero codeword with support disjoint from the support of $c'$.

Suppose that $\mC$ is $2$-outer minimal. Let $c,c'\in \mC$ nonzero codewords such that $\sH(c)\cap \sH(c')=\emptyset$. Then $\sH(c)\subseteq \sH(c+c')$ and $c_i=c_i+c_i'$ for all $i\in \sH(c)$ (because $c_i'=0$ for these indexes). However, $c\neq c+c'$, a contradiction.
\end{proof}

Let us conclude the section with a very basic and well-known sufficient condition for a code to be intersecting. Let us remark that this condition can be  seen as an analogue of the previously mentioned Ashikhmin-Barg condition for minimal codes. Indeed, it relies exclusively on the weight distribution of the code.  

\begin{lemma}\label{lem:ABinter}
Let $\C$ be an $[n,k,d]_q$ code. If $2d>n$, then $\C$ is intersecting.
\end{lemma}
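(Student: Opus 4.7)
The plan is a one-line contradiction argument based only on the definition of minimum distance and the obvious length constraint. I would proceed as follows.

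Suppose, towards a contradiction, that $\C$ is not intersecting. Then there exist nonzero codewords $c, c' \in \C$ with $\sigma(c) \cap \sigma(c') = \emptyset$. Since $c$ and $c'$ are nonzero, the definition of the minimum distance forces $\wH(c) = \dH(c,0) \geq d$ and likewise $\wH(c') \geq d$. On the other hand, the disjointness of the supports means that $\sigma(c)$ and $\sigma(c')$ are disjoint subsets of $\{1,\dots,n\}$, so $\wH(c) + \wH(c') = \#\sigma(c) + \#\sigma(c') \leq n$.

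Combining these two inequalities yields $2d \leq \wH(c) + \wH(c') \leq n$, contradicting the hypothesis $2d > n$. Hence no such pair $(c,c')$ exists, and $\C$ is intersecting.

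There is no real obstacle here; the only care needed is to note that the bound $\wH(c) \geq d$ applies because we took $c$ nonzero (so that $\dH(c,0)$ is actually a distance between two distinct codewords, using that the zero vector lies in $\C$). Alternatively one can phrase the same argument via the codeword $c - c'$, whose support is the disjoint union $\sigma(c) \sqcup \sigma(c')$ and hence has weight $\geq 2d > n$, which is absurd.
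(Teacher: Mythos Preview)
Your proof is correct and is precisely the elementary pigeonhole argument on supports that the paper invokes (without spelling out the details). The alternative phrasing via $c-c'$ is also fine and amounts to the same observation.
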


\begin{proof}
The statement follows by an elementary pigeonhole argument on the supports of codewords.
\end{proof}

\bigskip

\section{The geometry of intersecting codes}\label{sec:geo}

A classical approach to study linear codes is to consider their geometrical counterparts: researchers have extensively utilized the connections between linear codes and point sets within projective spaces. Notably, the MDS conjecture originated from Segre's inquiry into arcs within finite geometry \cite{segre1955curve}. Other remarkable connections exist between covering codes and saturating sets \cite{davydov2011linear}, or between minimal codes and strong blocking sets \cite{ABNgeo,tang2019full}. The aim of this section is to highlight the geometric interpretation of intersecting codes, which is, to our knowledge, an up to now unexplored topic.\\

Let us start from two basic definitions.

\begin{definition}
In $\Fq^{k}$, define the equivalence relation $\sim$ such that $x \sim y$ if $x$ and $y$ are collinear.
The \emph{projective space} $\PG(k-1, q)$ of dimension $k-1$ over $\F_q$ is then defined as
$$\PG(k-1, q) = \left(\Fq^{k}\setminus\{0\}\right) /{\sim}.$$
\end{definition}

\begin{definition}
A \emph{projective} $[n,k,d]_q$ \emph{system} $\mathcal{P}$ is a finite set of $n$ points (counted with multiplicity) of $\PG(k-1,q)$ that do not all lie on a hyperplane and such that
$$d = n- \max_{\mathcal{H}\text{ hyperplane }}\{|\mathcal{H}\cap \mathcal{P}|\}.$$ 
Projective $[n,k,d]_q$ systems $\mathcal{P}$ and $\mathcal{P}^\prime$ are \emph{equivalent} if there exists some $\varphi \in \PGL(k,q)$ mapping $\mathcal{P}$ to $\mathcal{P}^\prime$ which preserves the multiplicities of the points.
\end{definition}

There is a well-known correspondence between the equivalence classes of nondegenerate $[n,k,d]_q$ linear codes and the equivalence classes of projective $[n,k,d]_q$ systems (see \cite[Theorem~1.1.6]{MR1186841}) which works as follows: let $\mathcal{C}$ be a nondegenerate $[n, k, d]_q$-code, and let $G$ be a generator matrix of $\mathcal{C}$. Since there is no zero column in $G$, it is possible to take the equivalence classes of the columns of $G$ in $\PG(k-1, q)$, say $P_{1}, \dots, P_{n}$. It is straightforward to prove that the multiset $\mathcal{P}_G=\{P_{1}, \dots, P_{n} \}$ is a projective $[n,k,d]_q$ system (see Remark \ref{rmk:pr}). 
Note that if the code $\mathcal{C}$ is projective, all points in $\mathcal{P}_G$ have multiplicity $1$. Varying the generator matrix results in equivalent projective systems. The same holds by taking an equivalent code.

\begin{remark}\label{rmk:pr}
Consider a nondegenerate code $\mC$ with parameters $[n, k, d]_{q}$ and let $G$ be a generator matrix.
Let $\mathcal{P}_G=\{P_{1}, \dots, P_{n} \}$ be defined as above. A codeword $c \in \mC$ is of the form $xG$, where $x \in \Fq^{k}$, and $c_{i} = 0$ if and only if $P_{i}$ belongs to the hyperplane corresponding to $\langle x \rangle^{\perp}$ in $\PG(k-1, q)$.
From our preceding remarks it follows that the support of a codeword corresponds to the points (with multiplicities) in the projective space that are \emph{outside} the corresponding hyperplane.   \end{remark}

Let us introduce a general definition which will be fundamental in describing the geometry of intersecting codes. 

\begin{definition}
Let $t$ be a positive integer. A set of points in a projective space $\PG(k-1, q)$ that is contained in the union of $t$ hyperplanes is called \emph{$t$-cohyperplanar}. A set which is not $t$-cohyperplanar is called \emph{non-$t$-cohyperplanar}.
\end{definition}

\begin{example}
The union of any $t$ hyperplanes in $\PG(k-1, q)$ and one point not in any of these hyperplanes also yields a non-$t$-cohyperplanar set.
\end{example}

Non-2-cohyperplanar sets are the geometric counterpart of intersecting codes, as shown in the following result, which is essentially a remark, but it is named theorem because of its importance for the rest of the paper.

\begin{theorem}\label{thm:geointer}
Let $\mathcal{C}$ be a nondegenerate code over $\Fq$ of dimension $k$ with generator matrix $G$.
Consider the projective system $\mathcal{P}_G\subseteq \PG(k-1, q)$ defined as above. Then $\mathcal{P}_G$ is non-$t$-cohyperplanar if and only if for any set of $t$ codewords the intersection of their support is nonempty. In particular, $\mathcal{C}$ is intersecting if and only if $\mathcal{P}_G$ is non-$2$-cohyperplanar.
\end{theorem}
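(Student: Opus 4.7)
The plan is to use Remark \ref{rmk:pr} as a dictionary: a nonzero codeword $c = xG$ (with $x \in \F_q^k \setminus \{0\}$) corresponds to the hyperplane $H_x \subseteq \PG(k-1,q)$ cut out by $\langle x\rangle^\perp$, and the remark says $c_i = 0$ if and only if $P_i \in H_x$. So the set of coordinates \emph{not} in $\sigma(c)$ is precisely $\{i : P_i \in H_x\}$. Since $G$ has full row rank, $c \neq 0$ iff $x \neq 0$, and every hyperplane of $\PG(k-1,q)$ arises as $H_x$ for some nonzero $x$ (unique up to scalar). This sets up a correspondence between (multisets of) nonzero codewords and (multisets of) hyperplanes, in which scaling $x$ does not change either the support of $xG$ or the hyperplane $H_x$.

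Under this dictionary, I would chain a sequence of equivalences. Given $t$ nonzero codewords $c^{(1)} = x^{(1)}G, \dots, c^{(t)} = x^{(t)}G$, the condition $\bigcap_{j=1}^t \sigma(c^{(j)}) = \emptyset$ says that every coordinate $i$ is missing from at least one support, i.e. for every $i$ there exists $j$ with $c^{(j)}_i = 0$, i.e. $P_i \in H_{x^{(j)}}$ for some $j$; equivalently $\mathcal{P}_G \subseteq H_{x^{(1)}} \cup \cdots \cup H_{x^{(t)}}$. Conversely, if $\mathcal{P}_G$ is contained in hyperplanes $H_1, \dots, H_t$, then choosing nonzero $x^{(j)}$ with $H_j = H_{x^{(j)}}$ produces $t$ nonzero codewords $c^{(j)} = x^{(j)}G$ whose supports have empty common intersection by reversing the same chain.

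Taking contrapositives then gives the theorem: $\mathcal{P}_G$ is non-$t$-cohyperplanar (no $t$ hyperplanes cover it) iff no choice of $t$ nonzero codewords yields empty support-intersection, i.e.\ every $t$-tuple of nonzero codewords has a common coordinate in their supports. The case $t = 2$ is exactly the intersecting property, which gives the final assertion. I do not anticipate a serious obstacle; the only thing to handle carefully is bookkeeping multiplicities of points in $\mathcal{P}_G$ (they behave correctly since $P_i \in H_x$ depends only on the projective coordinate of the $i$-th column) and checking that nonzero $x$'s indeed yield nonzero $c$'s, which is immediate from $G$ having rank $k$.
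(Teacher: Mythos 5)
Your proposal is correct and follows essentially the same route as the paper: identify each nonzero codeword $xG$ with the hyperplane $\langle x\rangle^\perp$ via Remark \ref{rmk:pr}, observe that $\sigma(xG)$ consists of the indices $i$ with $P_i$ outside that hyperplane, and conclude by taking contrapositives. The only difference is that you spell out the bookkeeping (multiplicities, nonzeroness of $xG$ for $x\neq 0$) a bit more explicitly, which the paper leaves implicit.
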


\begin{proof}
Let $c_{1}=x_1G,\ldots,c_{t}=x_tG$ be two nonzero codewords of $\mC$.
Let $\mH_{1},\ldots,\mH_{t}$ be the projective hyperplanes corresponding to $\langle x_1\rangle^\perp,\ldots,\langle x_t\rangle^\perp$ respectively.
Since $\sigma(c_{j}) = \{ i \mid P_{i} \notin \mH_{j} \}$, the pointset $\mathcal{P}_G$ is not contained in $\bigcup_{j=1}^t \mH_{j}$ if and only if there is an $i \in \bigcap_{j=1}^t\sigma(c_{j})$.
\end{proof}

Since this paper is focused on intersecting codes, from now on we will only consider the case $t=2$. However, in the context of linear frameproof codes \cite{blackburn2003frameproof,huffman2021concise}, considering the general case may be of interest.

\medskip

Let us introduce a related family of geometric structures.

\begin{definition}
A point set $\mS$ in $\PG(k-1,q)$ is called \emph{strong blocking set} if for every hyperplane $\mH\subseteq \PG(k-1,q)$ we have that
$$ \langle \mS \cap \mH\rangle =\mH.$$
\end{definition}

The notion of a \emph{strong blocking set} was first introduced in \cite{davydov2011linear} as a means to construct saturating sets within projective spaces over finite fields. They were later reintroduced in \cite{bonini2021minimal} as \emph{cutting blocking sets}, aiming to generate a family of minimal codes. 

\begin{theorem}[\cite{ABNgeo,tang2019full}]\label{thm:geosbs}
Let $\mathcal{C}$ be a nondegenerate code over $\Fq$ of dimension $k$ with generator matrix $G$.
Consider the projective system $\mathcal{P}_G\subseteq \PG(k-1, q)$ defined as above. Then $\mathcal{P}_G$ is a strong blocking set if and only if $\mathcal{C}$ is a minimal code.
\end{theorem}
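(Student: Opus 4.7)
The plan is to leverage the code--geometry dictionary established in Remark \ref{rmk:pr}, and translate the condition ``$\mC$ is minimal'' into a statement about projective spans of $\mP_G\cap \mH$. Write a nonzero codeword of $\mC$ as $c=xG$ with $x\in\F_q^k\setminus\{0\}$, and let $\mH_x\subseteq\PG(k-1,q)$ denote the hyperplane corresponding to $\langle x\rangle^\perp$. Because $\mC$ is nondegenerate, the map $x\mapsto \mH_x$ descends to a bijection between nonzero codewords modulo nonzero scalars and hyperplanes of $\PG(k-1,q)$, and by Remark \ref{rmk:pr}, $i\notin\sigma(xG)$ is equivalent to $P_i\in\mH_x$.

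Unpacking minimality in these terms, the codeword $c=xG$ fails to be minimal iff there exists $c'=x'G$ with $x'\notin\F_q\cdot x$ (equivalently $\mH_{x'}\neq\mH_x$) such that $\sigma(c')\subseteq \sigma(c)$. Passing to complements inside $\{1,\dots,n\}$, this inclusion reads $\mP_G\cap \mH_x\subseteq \mP_G\cap \mH_{x'}$, which is in turn equivalent to $\mP_G\cap \mH_x\subseteq \mH_{x'}$. Hence $c$ is non-minimal precisely when $\mP_G\cap \mH_x$ is contained in some hyperplane distinct from $\mH_x$ itself.

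The geometric crux is then the elementary equivalence
\[
\mP_G\cap\mH_x\subseteq\mH_{x'}\text{ for some }\mH_{x'}\neq\mH_x
\;\Longleftrightarrow\;
\langle \mP_G\cap\mH_x\rangle\subsetneq\mH_x.
\]
The forward direction is immediate: such an inclusion forces $\mP_G\cap \mH_x\subseteq \mH_x\cap \mH_{x'}$, a proper subspace of $\mH_x$. For the converse, any proper projective subspace of $\mH_x$ is contained in a codimension-one subspace of $\mH_x$, which is exactly an intersection $\mH_x\cap \mH_{x'}$ for a suitable second hyperplane $\mH_{x'}$ of $\PG(k-1,q)$. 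Combining these steps, $\mC$ admits a non-minimal codeword iff some hyperplane $\mH$ satisfies $\langle \mP_G\cap \mH\rangle\subsetneq \mH$, i.e., iff $\mP_G$ fails to be a strong blocking set; contrapositively, $\mC$ is minimal iff $\langle \mP_G\cap \mH\rangle=\mH$ for every hyperplane $\mH$, which is the claim.

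The bookkeeping above is conceptually straightforward, and the only subtlety worth flagging is that $\mP_G$ is a multiset; however, both the inclusion $\mP_G\cap \mH\subseteq \mH'$ and the span $\langle \mP_G\cap \mH\rangle$ depend only on the underlying set of points, so multiplicities play no role. The main effort is therefore isolating the clean equivalence in the displayed formula above; once that is in hand, the theorem follows from translating the definitions on both sides.
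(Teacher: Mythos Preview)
Your argument is correct. The paper does not actually supply a proof of this theorem; it is stated with citations to \cite{ABNgeo,tang2019full} and used as a black box, so there is no in-paper argument to compare against. Your proof is the standard one found in those references: translate support inclusions into hyperplane containments via Remark~\ref{rmk:pr}, and then observe that $\langle \mP_G\cap\mH\rangle\subsetneq\mH$ holds precisely when $\mP_G\cap\mH$ lies in a second hyperplane $\mH'\neq\mH$.
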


As shown in the following, strong blocking sets form a subfamily of non-2-cohyperplanar sets.

\begin{corollary}\label{cor:sbsn2c}
Strong blocking sets in $\PG(k-1,q)$ are non-$2$-cohyperplanar. If $q=2$, the converse is also true.
\end{corollary}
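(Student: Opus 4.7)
The plan is twofold: first I would give a short proof using only the results already collected in the paper, and then sketch a more direct geometric argument that makes the binary hypothesis transparent.

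The quick route is a chase through the dictionaries. A strong blocking set in $\PG(k-1,q)$ corresponds, via Theorem \ref{thm:geosbs}, to a nondegenerate minimal $[n,k]_q$ code $\mC$. By Lemma \ref{lem:intermin}, $\mC$ is intersecting, and then Theorem \ref{thm:geointer} translates this back to say that the corresponding projective system is non-$2$-cohyperplanar. When $q=2$ the second half of Lemma \ref{lem:intermin} is available, so reading the same chain of equivalences in reverse yields the converse.

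For a purely geometric proof of the forward direction, I would assume that a strong blocking set $\mS$ lies in a union $\mH_1 \cup \mH_2$ of two hyperplanes and derive a contradiction. Picking any third hyperplane $\mH$ through the codimension-two flat $\mH_1 \cap \mH_2$ (such an $\mH$ exists because $q+1\geq 3$ hyperplanes pass through every codimension-two subspace), one has
\[
\mS \cap \mH \;\subseteq\; (\mH_1 \cap \mH) \cup (\mH_2 \cap \mH) \;=\; \mH_1 \cap \mH_2,
\]
so $\langle \mS \cap \mH \rangle$ is confined to a proper subspace of $\mH$, contradicting the defining property of a strong blocking set. For the converse when $q=2$, the special feature of $\PG(k-1,2)$ is that the $q+1=3$ hyperplanes through any codimension-two subspace $\mH'$ partition the ambient space into $\mH'$ and three complementary pieces. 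If $\mS$ were not strong blocking, one could find a hyperplane $\mH$ and a codimension-two subspace $\mH'\subseteq \mH$ with $\mS\cap \mH\subseteq \mH'$; the complement of $\mH\setminus \mH'$ then equals the union of the other two hyperplanes through $\mH'$, forcing $\mS$ into this union and contradicting non-$2$-cohyperplanarity.

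The only step that requires any thought is the last counting argument, and it is precisely this step that exposes why the converse is a binary phenomenon: when $q\geq 3$ the complement of a hyperplane minus a codimension-two subspace decomposes into $q$ hyperplane strata rather than two, so no contradiction with $2$-cohyperplanarity can be forced. This mirrors exactly the role played by the binary hypothesis in Lemma \ref{lem:intermin}, and is really the only obstacle in the proof.
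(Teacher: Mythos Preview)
Your ``quick route'' via Theorem~\ref{thm:geosbs}, Lemma~\ref{lem:intermin} and Theorem~\ref{thm:geointer} is exactly the paper's proof, which consists of a single sentence invoking precisely those three results.

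Your supplementary geometric argument is correct and goes beyond what the paper offers. The forward direction is clean: if $\mS\subseteq\mH_1\cup\mH_2$ with $\mH_1\neq\mH_2$, any third hyperplane $\mH$ through the codimension-two flat $\mH_1\cap\mH_2$ satisfies $\mH\cap\mH_i=\mH_1\cap\mH_2$ for $i=1,2$, so $\mS\cap\mH\subseteq\mH_1\cap\mH_2$ cannot span $\mH$. For the binary converse, your partition argument is also sound: through a codimension-two subspace $\mH'$ there pass exactly $q+1$ hyperplanes, and their complements in $\mH'$ partition $\PG(k-1,q)\setminus\mH'$; when $q=2$ this leaves exactly two hyperplanes besides $\mH$ to absorb all of $\mS$. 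Your closing remark, that for $q\geq 3$ the remaining $q$ hyperplane strata are too many to force $2$-cohyperplanarity, pinpoints nicely why the converse is a binary phenomenon. The paper does not spell out any of this geometry, so your second argument is a genuine addition rather than a restatement.
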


\begin{proof}
This is a direct consequence of Lemma \ref{lem:intermin}, Theorem \ref{thm:geointer} and Theorem \ref{thm:geosbs}. 
\end{proof}

The result above entails that all the numerous known examples of strong blocking sets provide examples of non-2-cohyperplanar sets. We will see, however, that apart from the binary case (where the two concepts coincide), these examples are ``larger'' than necessary (in the sense that they contain many superfluous points).

To illustrate this, let us introduce a general results on non-2-cohyperplanar sets constructed from unions of lines. We first recall the avoidance property, introduced in \cite{Fancsali}.

\begin{definition}
Let $\mathcal{L}$ be a set of lines of $\PG(k-1, q)$.
We say that $\mathcal{L}$ has the \emph{avoidance property} if there is no projective subspace of codimension $2$ that meets every line.
\end{definition}

In \cite{Fancsali}, it is proved that a set of lines having the avoidance property is a strong blocking set (and hence a non-2-cohyperplanar set).
However, as the next result shows, it is enough to take 3 points on each line to get a non-2-cohyperplanar set.

\begin{proposition}\label{pro:3points}
Let $\mathcal{L}$ be a set of lines with avoidance property and let $\mathcal{S}$ be a set of points such that for every $\ell\in \mathcal{L}$, 
$$|\mathcal{S}\cap \ell|\geq 3.$$ 
Then $\mathcal{S}$ is a non-2-cohyperplanar set.
\end{proposition}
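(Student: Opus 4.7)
The plan is to argue by contradiction: assume $\mathcal{S}$ is contained in the union of two hyperplanes $\mH_1 \cup \mH_2$ and derive that some codimension-$2$ subspace meets every line of $\mathcal{L}$, contradicting the avoidance property. The key observation driving everything is the pigeonhole principle applied to the three points on each line: among three points of $\ell$ lying in $\mH_1\cup \mH_2$, at least two must lie in the same hyperplane $\mH_i$, and since a line either meets a hyperplane in a single point or is entirely contained in it, this forces $\ell\subseteq \mH_i$.

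First I would set up the contradiction: suppose $\mathcal{S}\subseteq \mH_1\cup \mH_2$. Applying the pigeonhole observation to every $\ell\in\mathcal{L}$ yields that each line of $\mathcal{L}$ is wholly contained in $\mH_1$ or in $\mH_2$. Split into two cases. In the main case $\mH_1\neq \mH_2$, the intersection $V=\mH_1\cap\mH_2$ is a projective subspace of codimension $2$ in $\PG(k-1,q)$. For a line $\ell$ with $\ell\subseteq \mH_1$, we have $\ell\cap\mH_2\neq\emptyset$ (any line meets any hyperplane in at least one point), and any point of $\ell\cap\mH_2$ lies also in $\mH_1$, hence in $V$; symmetrically if $\ell\subseteq\mH_2$. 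Thus $V$ meets every line of $\mathcal{L}$, contradicting the avoidance property.

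In the degenerate case $\mH_1=\mH_2$, every line $\ell\in\mathcal{L}$ is contained in $\mH_1$. Choose any hyperplane $V$ of $\mH_1$; this is a subspace of codimension $2$ in $\PG(k-1,q)$, and any line contained in $\mH_1$ meets the hyperplane $V$ of $\mH_1$ in at least one point. Again the avoidance property is violated. (The trivial cases $k\leq 2$, where $\PG(k-1,q)$ contains no proper pair of distinct hyperplanes to begin with, need no separate treatment since the statement is vacuous.)

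The argument is short and the only real step is the pigeonhole observation; I do not expect any serious obstacle, since once one sees that three points on a line suffice to pin the line inside one of the two hyperplanes, the rest is routine incidence geometry. The slight subtlety to be careful with is verifying that $\mH_1\cap\mH_2$ is genuinely of codimension $2$ (rather than codimension $1$ through equality), which is exactly why the case split $\mH_1=\mH_2$ versus $\mH_1\neq \mH_2$ must be carried out.
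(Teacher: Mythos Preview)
Your proof is correct and uses essentially the same idea as the paper: the pigeonhole observation that three collinear points in $\mH_1\cup\mH_2$ force the whole line into one of the $\mH_i$, combined with the fact that such a line then meets the codimension-$2$ intersection $\mH_1\cap\mH_2$. The paper runs the argument directly rather than by contradiction (starting from the avoidance property to find a line $\ell$ missing $\mH_1\cap\mH_2$, and concluding that $|\ell\cap(\mH_1\cup\mH_2)|\leq 2$), and it does not separately treat the case $\mH_1=\mH_2$; your version is slightly more careful on that point, but the substance is the same.
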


\begin{proof}
Let $\mathcal{H}_{1}$ and $\mathcal{H}_{2}$ be two projective hyperplanes and let $\mathcal{V} = \mathcal{H}_{1} \cap \mathcal{H}_{2}$.
The subspace $\mathcal{V}$ is of codimension $2$, so there exists $\ell \in \mathcal{L}$ such that $\ell \cap \mathcal{V} = \emptyset$. The line
$\ell$ cannot be contained in $\mathcal{H}_{1}$, since otherwise $\ell$ would meet also $\mathcal{H}_{2}$, contradicting $\ell \cap \mathcal{V} = \emptyset$. 
Symmetrically, $\ell$ also cannot be contained in $\mathcal{H}_{2}$. 
This means that 
$$|\ell \cap \mathcal{H}_{1}|=|\ell \cap \mathcal{H}_{2}|=1.$$ 
Therefore, since $\mathcal{S}$ contains at least $3$ points of $\ell$, $\mathcal{S}$ contains at least one point outside of $\mathcal{H}_{1} \cup \mathcal{H}_{2}$, meaning that $\mathcal{S}$ is non-2-cohyperplanar.
\end{proof}

\begin{remark}
Proposition \ref{pro:binaryouterminimal} and Theorem \ref{thm:geointer} imply that, when $q$ is even, the geometric counterparts of $2$-outer minimal codes, that is the $2$-outer strong blocking sets, are non-2-cohyperplanar. See \cite{ABN2023} for more details about $2$-outer strong blocking sets and their geometric properties. 
\end{remark}

We will now give two examples of non-2-cohyperplanar sets of quite small size (with respect to the projective space in which they are defined).

\begin{example}[\textbf{Arcs with at least $2k-1$ points}]\label{exa:arcs}
An \emph{arc} in $\PG(k-1,q)$ is a set of points with the property that any $k$ of them span the whole space. It is well-known that arcs in $\PG(k-1,q)$ correpond to MDS codes of dimension $k$ over $\F_q$.
Any arc $\mathcal{A}$ with at least $2k-1$ points is non-$2$-cohyperplanar: the maximum number of points of $\mathcal{A}$ contained in a hyperplane is $k-1$, by definition. Hence, if $|A|>2(k-1)$, for any couple of hyperplanes $\mathcal{H}_1,\mathcal{H}_2$ there is always a point of $\mathcal{A}$ not contained in $\mathcal{H}_1\cup \mathcal{H}_2$.
\end{example}

\begin{example}[\textbf{The Sparse Tetrahedron}]\label{exa:tetra}    
Consider $k$ points $V_1,\ldots,V_k$ of $\PG(k-1, q)$ spanning the whole space. For any $i,j\in\{1,\ldots,k\},i<j$, consider a point $P_{i,j}$ on the line $\langle V_i,V_j\rangle$, $P_{i,j}\not\in\{V_i,V_j\}$. The set
\[\mathcal{T}=\{V_1,\ldots,V_k\} \cup \{P_{i,j}\mid i,j\in\{1,\ldots,k\},i<j\}\]
is called \emph{sparse tetrahedron}.
Such a 
set is non-2-cohyperplanar. Indeed, the intersection of $\mathcal{T}$ with any hyperplane $\mH$ cannot contain all $V_1,\ldots,V_k$. 
If $\mathcal{T}\cap \mH$ does not contain $V_i$, it also does not contain any line passing through $V_i$, and in particular, it does not contain any of the lines $\langle V_i,V_j\rangle$.
For each of these lines, there is at least a point distinct from $V_i$ not contained in $\mH$ (otherwise the whole line would be contained in $\mH$). Therefore, we have identified a set of points not contained in $\mH$ spanning the whole space.
This means that any other hyperplane $\mH'$ will not suffice to cover all the points, guaranteeing that we have a non-2-cohyperplanar set.
\end{example}

We may introduce a notion of minimality.

\begin{definition}
A non-2-cohyperlanar set $\mathcal{S}$ is said to be \emph{minimal} if there exist two hyperplanes $\mathcal{H}_1$ and $\mathcal{H}_2$ and a point $P\in\mathcal{S}$ such that $\mathcal{S}\setminus\{P\}\subset \mathcal{H}_1\cup\mathcal{H}_2$.  
\end{definition}

\begin{remark}
It is easy to observe that arcs with $2k-1$ points in $\PG(k-1,q)$ and sparse tetrahedrons are examples of minimal non-2-cohyperplanar sets.
\end{remark}

Let us conclude the section with a geometric property of non-2-cohyperplanar sets, with a strong coding theoretical implication.

\begin{theorem}\label{thm:distance}
Let $\mS$ be a non-2-cohyperplanar set and $\mH$ be a hyperplane in $\PG(k-1, q)$. Then 
$$|\mS\setminus (\mS\cap \mH)|\geq k.$$
Equivalently, if $\mathcal{C}$ is an $[n,k,d]_q$ intersecting code, then $d\geq k$.
\end{theorem}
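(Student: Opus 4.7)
The plan is to contrapose the statement about point sets. Set $T = \mathcal{S} \setminus (\mathcal{S} \cap \mathcal{H})$, the subcollection of points (with multiplicities) of $\mathcal{S}$ not lying on $\mathcal{H}$. Suppose toward a contradiction that $|T| \leq k-1$. Counting with multiplicity makes no difference to the span, so the underlying set of $T$ consists of at most $k-1$ distinct points of $\PG(k-1,q)$. Any at most $k-1$ points in $\PG(k-1,q)$ span a projective subspace of dimension at most $k-2$, and every such subspace is contained in some hyperplane $\mathcal{H}'$.

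Hence $T \subseteq \mathcal{H}'$, and combining with the definition of $T$ we obtain
\[
\mathcal{S} = (\mathcal{S} \cap \mathcal{H}) \cup T \subseteq \mathcal{H} \cup \mathcal{H}',
\]
contradicting the non-$2$-cohyperplanarity of $\mathcal{S}$. This establishes the bound $|\mathcal{S} \setminus (\mathcal{S} \cap \mathcal{H})| \geq k$.

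For the equivalent coding-theoretic formulation, first reduce to the nondegenerate case: removing zero columns from a generator matrix of an intersecting $[n,k,d]_q$ code does not alter $k$, $d$, or the intersecting property, it only decreases $n$, and a lower bound on $d$ for the reduced code is a lower bound on $d$ for the original. So assume $\mathcal{C}$ is nondegenerate. By Theorem \ref{thm:geointer}, the associated projective system $\mathcal{P}_G$ is non-$2$-cohyperplanar. By Remark \ref{rmk:pr}, for any nonzero codeword $c = xG$, the support $\sigma(c)$ corresponds exactly to the points of $\mathcal{P}_G$ outside the hyperplane $\mathcal{H}_x$ associated with $\langle x\rangle^\perp$, so $\mathrm{w}(c) = |\mathcal{P}_G \setminus (\mathcal{P}_G \cap \mathcal{H}_x)|$. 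Applying the geometric bound to every such hyperplane yields $\mathrm{w}(c) \geq k$ for every nonzero $c$, hence $d \geq k$.

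There is essentially no obstacle here; the only point to be a little careful about is the multiset nature of $\mathcal{P}_G$ (so that ``$|T|$'' is interpreted with multiplicity while ``span'' is interpreted on the underlying set) and the handling of degenerate codes, both addressed above.
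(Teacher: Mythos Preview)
Your proof is correct and follows essentially the same approach as the paper: both argue that if $T=\mS\setminus(\mS\cap\mH)$ had fewer than $k$ points it would lie in a hyperplane $\mH'$, forcing $\mS\subseteq\mH\cup\mH'$ and contradicting non-$2$-cohyperplanarity. Your write-up is more explicit about the multiset/span distinction and the reduction to the nondegenerate case, but the underlying idea is identical.
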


\begin{proof}
The set $\mS\setminus (\mS\cap \mH)$ cannot be contained in a hyperplane, since otherwise $\mS$ would be $2$-cohyperplanar. Hence, it must contains at least $k$ points. The statement about intersecting codes is a direct consequence of Theorem \ref{thm:geointer}.
\end{proof}

\bigskip

\section{On the size of small non-2-cohyperplanar sets}\label{sec:size}

While it is easy to give examples of non-2-cohyperplanar sets with many points, it is not clear how small they can be. Moreover, if we add a point to a set that is already non-2-cohyperplanar, it remains non-2-cohyperplanar. Hence, finding the minimum size of a non-2-cohyperplanar set in $\PG(k-1, q)$ is a natural problem and this is the aim of this section.

\begin{definition}
We define $i(k, q)$ to be the \emph{size of the smallest non-$2$-cohyperplanar set} in $\PG(k-1, q)$ (equivalently, the \emph{length of the shortest linear intersecting code} over $\Fq$ of dimension $k$).
\end{definition}

\subsection{Lower bounds}

We start with an easy lower bound.

\begin{theorem}\label{thm:bound}
Let $\mS$ be a non-2-cohyperplanar set in $\PG(k-1,q)$. Then $|\mS|\geq 2k-1$. Hence
$$i(k,q)\geq 2k-1.$$
If $|\mS|=2k-1$, then $\mS$ is an arc.
\end{theorem}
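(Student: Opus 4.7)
The plan is to leverage Theorem \ref{thm:distance}, which bounds how many points of $\mS$ can lie on a single hyperplane, together with a basic dimension count. First I would note that as a projective system $\mS$ spans $\PG(k-1,q)$, so in particular $|\mS|\geq k$. Pick any $k-1$ points of $\mS$; they lie in a projective subspace of dimension at most $k-2$, which is contained in some hyperplane $\mH$. Applying Theorem \ref{thm:distance} to this $\mH$ yields
\[|\mS| \;\geq\; |\mS \cap \mH| + k \;\geq\; (k-1) + k \;=\; 2k-1.\]

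For the equality statement, assume $|\mS|=2k-1$. Then Theorem \ref{thm:distance} forces $|\mS \cap \mH| \leq k-1$ for every hyperplane $\mH$. This is precisely the arc condition, since any $k$ points of $\mS$ that failed to span $\PG(k-1,q)$ would lie inside some hyperplane, violating the bound. A minor bookkeeping point is to check that all multiplicities equal $1$: if a point of $\mS$ had multiplicity $\geq 2$, one could combine two copies of it with $k-2$ further points of $\mS$ inside a common hyperplane, again producing a hyperplane meeting $\mS$ in at least $k$ points, a contradiction.

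Equivalently, the whole statement can be read on the coding side: by Theorem \ref{thm:distance} the associated intersecting code satisfies $d\geq k$, the Singleton bound gives $d\leq n-k+1$, hence $n\geq 2k-1$; equality forces $d=n-k+1$, i.e.\ the code is MDS, which corresponds precisely to an arc under the standard projective-system correspondence.

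I do not expect any real obstacle here: the statement is essentially a direct combination of Theorem \ref{thm:distance} with an elementary dimension count (or, equivalently, with the Singleton bound). The only mild subtlety is the multiplicity check needed to conclude that the extremal configurations are arcs in the usual, multiplicity-free sense.
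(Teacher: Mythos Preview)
Your proposal is correct and essentially matches the paper. For the inequality, the paper argues directly that any $2k-2$ points split into two batches of $k-1$, each lying in a hyperplane; your version invokes Theorem~\ref{thm:distance} instead, but unwinding its one-line proof recovers exactly that argument. For the equality case, your coding-theoretic alternative ($d\geq k$ from Theorem~\ref{thm:distance} together with the Singleton bound forces the code to be MDS, hence an arc) is verbatim the paper's proof; your additional geometric phrasing and multiplicity check are a harmless elaboration.
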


\begin{proof}
Suppose $|\mS| \leq  2k-2$. 
Since any $k-1$ points are always contained in a hyperplane, there must exist two hyperplanes containing $\mS$, which gives a contradiction, proving the inequality.\\
Suppose now that $|\mS|=2k-1$. In coding theoretical language, this means that the corresponding code has length $n=2k-1$. Now, its minimum distance $d$ satisfies $d\geq k$, by Theorem \ref{thm:distance}, and 
$$d\leq 2k-1-k+1=k,$$
by the Singleton bound. Hence $d=k$ and the code is MDS, so that $\mS$ is an arc.
\end{proof}

\begin{remark}\label{rmk:MDS}
Let us underline that Theorem~\ref{thm:bound} implies that the sets introduced in Example~\ref{exa:arcs} are the smallest whenever arcs of cardinality $2k-1$ exist. It is well-known that arcs with $q+1$ points exist. The celebrated MDS conjecture, posed by Segre in \cite{segre1955curve}, states that the maximal size of an arc in $\PG(k-1,q)$ (where $2\leq k\leq q-1$) is $q+1$, up to two exceptional cases for which it is $q+2$. Moreover, if $k\geq q$, the maximal size of an arc is $k+1$. The conjecture has been demonstrated across various parameter sets $q$ and $k$ (see \cite{hirschfeld2015open} for a survey). In \cite{ball2012sets}, Ball achieved a significant breakthrough by demonstrating that the MDS conjecture holds when $q$ is a prime. In particular, if $q$ is a prime, he proved that every arc with $q+1$ points in $\PG(k-1,q)$, with  $2\leq k\leq q-1$, is a rational normal curve (the geometric counterpart of Reed-Solomon codes). So, under the MDS conjecture,
$$i(k,q)=2k-1$$
if and only if $k\leq \frac{q+2}{2}$ (or $k\leq \frac{q+3}{2}$ in the exceptional cases). Note that it is not necessary to invoke the MDS conjecture to say that the bound is not tight for large $k$, because it is well-known that, if $2\leq k\leq q-1$, arcs cannot exist for $k$ larger than $2q-2$ (see \cite[Corollary 7.4.4]{huffman2010fundamentals}).
\end{remark}

The following result is a Plotkin-like bound.

\begin{theorem}\label{thm:Plotkin}
For $1 \leq t \leq k$,
\begin{equation}\label{eq:Plotkin}
i(k,q) \geq k + \frac{q^{t}-1}{q^{t} - q^{t-1}}(k-t).
\end{equation}
\end{theorem}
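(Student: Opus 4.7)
The plan is to reduce the desired inequality to an application of the classical Plotkin bound, applied to a carefully constructed $t$-dimensional subcode of an intersecting $[n,k]_q$ code $C$ realizing $i(k,q)$. The key idea is to exhibit a subcode whose effective length is bounded in terms of $n, k, t$ while its minimum distance remains at least $k$, inherited from the ambient code via Theorem~\ref{thm:distance}.

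First I would fix an intersecting $[n,k]_q$ code $C$ with $n = i(k,q)$ and record that $d(C) \geq k$ by Theorem~\ref{thm:distance}. Next, since a generator matrix of $C$ has rank $k$, one can select a set $T \subseteq \{1, \ldots, n\}$ of $k - t$ coordinates such that the corresponding columns of the generator matrix are $\F_q$-linearly independent; equivalently, the restriction map $C \to \F_q^T$ is surjective. I then define the shortened subcode
\[
  C_T := \{c \in C : c|_T = 0\},
\]
which is the kernel of this surjection, so $\dim C_T = k - (k - t) = t$. Because every nonzero codeword of $C_T$ is also a nonzero codeword of $C$, its minimum distance is at least $d(C) \geq k$. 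Moreover $C_T$ is identically zero on the $k - t$ positions in $T$, so its support lies in the remaining $n - (k - t)$ coordinates.

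Finally I would apply the classical Plotkin weight-sum argument to $C_T$: on the one hand, the minimum-distance lower bound gives $\sum_{c \in C_T \setminus \{0\}} w(c) \geq (q^t - 1)\,k$; on the other hand, each coordinate outside $T$ contributes either $0$ or $(q-1)\,q^{t-1}$ to this sum, whence
\[
  \sum_{c \in C_T \setminus \{0\}} w(c) \leq (n - k + t)(q-1)\,q^{t-1}.
\]
Combining and rearranging yields $n \geq k - t + k\cdot\frac{q^t - 1}{(q-1)\,q^{t-1}}$, which, since $\frac{q^t - 1}{(q-1)q^{t-1}} \geq 1$, immediately implies the stated bound $n \geq k + \frac{q^t - 1}{q^t - q^{t-1}}(k - t)$.

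The only step that requires care is the construction of the coordinate set $T$: one must ensure the shortening produces a subcode of dimension exactly $t$, i.e., the $k - t$ selected columns are linearly independent. But this is a routine consequence of $C$ having full rank $k$, so it is not a genuine obstacle. A sanity check with $t = 1$ recovers the bound $n \geq 2k - 1$ from Theorem~\ref{thm:bound}, and with $t = k$ one recovers the trivial bound $n \geq k$.
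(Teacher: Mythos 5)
Your proof is correct, and in fact establishes a slightly stronger inequality than the one stated. The paper's argument is closely parallel but not identical: it takes a systematic generator matrix $G = (I_k \mid A)$, considers the $t$-dimensional subcode spanned by $t$ of its rows, and then \emph{punctures} that subcode on all $k$ systematic coordinates, applying the Plotkin count to the resulting vectors of length $n-k$ whose minimum weight is only $\geq d - t \geq k - t$. You instead \emph{shorten} the full code on $k-t$ coordinates with independent columns, obtaining a $t$-dimensional subcode supported on $n-(k-t)$ positions that inherits the full minimum distance $\geq k$ directly from Theorem~\ref{thm:distance}. (With $T = \{t+1, \dots, k\}$ and $G$ systematic, your subcode is exactly the span of the first $t$ rows, so the difference between the two arguments is precisely whether the $t$ remaining systematic coordinates of that subcode are discarded before counting.) Writing $\lambda = \frac{q^t-1}{q^t-q^{t-1}}$, the paper's count yields $n \geq k + (k-t)\lambda$ while yours yields $n \geq k - t + k\lambda$; these differ by $t(\lambda - 1) \geq 0$, so your bound dominates, strictly whenever $t \geq 2$. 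Your last step, passing from the stronger inequality to the stated one via $\lambda \geq 1$, is correct, and the existence of the coordinate set $T$ is indeed routine since $G$ has rank $k$.
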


\begin{proof}
Let us give a proof in coding theoretical language. Let $G = (I_{k} \mid A)$ be a generator matrix of an intersecting $[n,k,d]_q$ code $\C$, and consider $t$ rows of $G$.
Let $\mV$ be the multiset of vectors formed by the last $n-k$ coordinates of the nonzero vectors in the rowspan of these $t$ rows.
We aim to compute the sum of the weights of all vectors in $\mV$, called the total weight of $\mV$ and denoted ${\rm w}(\mV)$.
Since the $t$ rows are linearly independent, there are $q^{t} - 1$ vectors in $\mV$ (counted with multiplicities).
Since any codeword corresponding to a vector of $\mV$ has at most $t$ nonzero elements in the first $k$ coordinates, each element of $\mV$ must have weight at least $d - t \geq k - t$ (the last inequality comes from Theorem \ref{thm:distance}).
This means that 
$${\rm w}(\mV) \geq (q^{t} - 1) (k - t).$$
On the other hand, each vector of $\mV$ has length exactly $n - k$, and for each coordinate there are at most $q^{t} - q^{t-1}$ codewords of $\mV$ that are nonzero at this coordinate.
This yields that the total weight is at most
$${\rm w}(\mV) \leq (n - k) (q^{t} - q^{t-1}).$$
Combining both inequalities finishes the proof.
\end{proof}

\begin{remark}
Note that \eqref{eq:Plotkin} reduces to the bound in Theorem \ref{thm:bound} for $t=1$. By Remark \ref{rmk:MDS}, the case $t=1$ give the best bound for small $k$. When $q$ is large, the case $t=1$ also gives the best bound. So Theorem \ref{thm:Plotkin} improves on Theorem \ref{thm:bound} only when $k$ is large compared with $q$.
\end{remark}

\begin{remark}
For $q=2$, non-2-cohyperplanar sets coincide with strong blocking sets. In \cite{alfarano2022three}, it is proved that $i(k,2)\geq 3k-3$. This means that \eqref{eq:Plotkin} is never tight for $q=2$, $k>2$ and any $t\in \{1,\ldots,k\}$. Note also that some structural results on non-2-cohyperplanar sets of cardinality $3k-3$ are given in \cite{scotti2024lower}.
\end{remark}

\subsection{Asymptotic lower bounds}

After demonstrating general bounds, we now present asymptotic bounds, which improve upon previous ones for large $k$ (with fixed $q$). It is worth noting that similar methods have also been employed for general strong blocking sets in \cite{scotti2024lower,bishnoi2023blocking} and for intersecting codes over prime fields in \cite{katona1983minimal,CL,multiwise}.

Before stating our bound, we must define $q$-ary upper-bounding functions.

\begin{definition}
Let $\mC$ be a code with parameters $[n, k, d]_{q}$.
We define its \emph{rate} $R=k/n$ and its \emph{relative minimum distance} $\delta = d/n$.
Let $f: [0, 1] \rightarrow [0, 1]$ be a continuous decreasing function. We say that $f$ is \emph{$q$-ary upper-bounding} if, for any $R$ and $\delta$ verifying $R > f(\delta)$, there is no sequence of codes with parameters $[n_{s}, R\cdot n_{s}, \delta \cdot n_{s}]_{q}$ such that $n_{s} \rightarrow \infty$.
\end{definition}

\begin{example}
The Singleton bound stated in Section \ref{sec:coding} implies that the function $f(\delta) = 1-\delta$ is $q$-ary upper-bounding for any $q$.
\end{example}

Notice that, by Theorem \ref{thm:distance}, the parameters of intersecting codes must lie in the region $$\{(\delta, R)\in \R_{\geq 0}^2 \mid R\leq \delta \}.$$
By combining this with a suitable $q$-ary upper-bounding function, one may deduce an upper bound on the asymptotic rate of intersecting codes over $\Fq$, or, equivalently, a lower bound on their length.

\begin{theorem}\label{thm:lowerasymp}
$$\liminf_{k \rightarrow \infty} \frac{i(k, q)}{k} \geq 2 + \frac{1}{q-1}.$$
\end{theorem}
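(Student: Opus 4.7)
The plan is to combine the rate--distance constraint $\delta \geq R$ coming from Theorem~\ref{thm:distance} (which states $d \geq k$ for every intersecting code) with the classical asymptotic Plotkin bound, used as a $q$-ary upper-bounding function in the sense of the definition introduced above. Geometrically, the parameters of any asymptotically nontrivial family of intersecting codes must lie in the intersection of the half-plane $R \leq \delta$ with the Plotkin region $R \leq 1 - \tfrac{q}{q-1}\delta$ (valid for $\delta \leq (q-1)/q$); the extremal point of that intersection, obtained by setting $R = \delta$ in both inequalities simultaneously, yields $R \leq (q-1)/(2q-1)$, which inverts to $n/k \geq (2q-1)/(q-1) = 2 + 1/(q-1)$.

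To execute this, I would argue by contradiction: assume $\liminf_{k\to\infty} i(k,q)/k < 2 + 1/(q-1)$. Then one can pick shortest intersecting codes $\mC_s$ of dimensions $k_s \to \infty$ and lengths $n_s = i(k_s,q)$ (which also tend to infinity by Theorem~\ref{thm:bound}) along a subsequence where $R_s = k_s/n_s$ converges to some $R^\ast > (q-1)/(2q-1)$; by compactness of $[0,1]$, a further extraction makes $\delta_s = d(\mC_s)/n_s$ converge to some $\delta^\ast$. Theorem~\ref{thm:distance} gives $\delta^\ast \geq R^\ast > 0$, while the asymptotic Plotkin bound---which is a standard $q$-ary upper-bounding function, following from the finite Plotkin inequality by letting $n\to\infty$---forces $R^\ast \leq 1 - \tfrac{q}{q-1}\delta^\ast$ (the case $\delta^\ast > (q-1)/q$ would yield $R^\ast = 0$, contradicting $R^\ast > 0$). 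Substituting $\delta^\ast \geq R^\ast$ into this Plotkin inequality produces $R^\ast \leq (q-1)/(2q-1)$, contradicting the choice of $R^\ast$.

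I do not foresee any genuine obstacle beyond correctly citing Plotkin as an upper-bounding function; the rest is elementary manipulation of limits and a single substitution. As a consistency check, for $q=2$ the bound specializes to $\liminf i(k,2)/k \geq 3$, in agreement with the stronger non-asymptotic bound $i(k,2) \geq 3k-3$ from \cite{alfarano2022three} recalled above, and for large $q$ it tends to $2$, as expected from the arc-based examples in Example~\ref{exa:arcs} where $n = 2k-1$ is achievable whenever $k \leq (q+2)/2$. The argument does not attempt to construct families saturating the bound; tightness remains a separate question, addressed elsewhere in the paper.
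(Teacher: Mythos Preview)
Your proposal is correct and follows essentially the same approach as the paper: both combine the constraint $R\leq\delta$ from Theorem~\ref{thm:distance} with the asymptotic Plotkin bound $R\leq 1-\tfrac{q}{q-1}\delta$ as a $q$-ary upper-bounding function, and extract the extremal rate $R\leq(q-1)/(2q-1)$ at the intersection point. Your contradiction argument with explicit subsequence extraction is somewhat more detailed than the paper's terse geometric phrasing, but the underlying idea is identical.
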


\begin{proof}
The asymptotic Plotkin bound \cite[Theorem 2.10.2]{huffman2010fundamentals} corresponds to the $q$-ary upper-bounding function
$$f(x) = 1 - \frac{q}{q-1} \ x.$$
The intersection of the graphs of this function and the function $g(x) = x$ corresponding to the bound $R\leq \delta $ is the point $\left(\frac{q-1}{2q-1}, \frac{q-1}{2q-1}\right)$.
This produces the upper bound on the asymptotic rate and hence the lower bound above.
\end{proof}

\begin{remark}
Note that the previous result can be also obtained by Theorem \ref{thm:Plotkin}, by letting $t$ grow to infinity.  
\end{remark}

By considering the MRRW bound \cite[Theorem 2.10.6]{huffman2010fundamentals}, instead of the Plotkin bound, one is able to provide a stronger upper bound on the rate for small values of $q$.

Let $H_{q}$ be the $q$-ary entropy function, that is
$$H_{q}(x) = -x\log_{q}\Big(\frac{x}{q-1}\Big) - (1-x)\log_{q}(1-x).$$
The MRRW bound states that 
$$M_{q}(x) = H_{q}\left(\frac{1}{q}\left(q-1-(q-2)x - 2 \sqrt{(q-1)x(1-x)}\right)\right)$$
is a $q$-ary upper-bounding function. Using the same argument as above, we get stronger upper bounds on the maximum rate of intersecting codes whenever $q\leq 17$.

Table \ref{table1} summarizes the improved bounds obtained this way (some of them were already known in the references cited above). 

\begin{center}
\begin{table}[ht!]
\caption{Lower bound on the asymptotic length of intersecting codes}
\label{table1}
\begin{tabular}{c|c}
$q$ & $\liminf_{k \rightarrow \infty} \frac{i(k, q)}{k}$ \\ \hline
2   & 3.5276                                            \\
3   & 2.8272                                            \\
4   & 2.5713                                            \\
5   & 2.4342                                            \\
7   & 2.2862                                            \\
8   & 2.2411                                            \\
9   & 2.2060                                            \\
11  & 2.1547                                            \\
13  & 2.1185                                            \\
16  & 2.0802                                            \\
17  & 2.0703                                            \\
\end{tabular}
\end{table}
\end{center}

\begin{theorem}
The MRRW bound yields a better upper bound on the rate of intersecting codes than the Plotkin bound when $q \leq 17$.
In other words, if and only if $q \geq 19$, the following holds :
$$M_{q}\left(\frac{q-1}{2q-1}\right) \geq \frac{q-1}{2q-1}.$$
\end{theorem}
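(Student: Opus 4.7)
The plan is to reduce the stated equivalence to an inequality $\phi(q)\ge 0$ in a single parameter, analyze $\phi$ asymptotically as $q\to\infty$, and close the argument by direct numerical checks at the finitely many prime powers near the threshold.

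First I would specialize $M_q$ at $\delta^\ast := (q-1)/(2q-1)$. The key observation is that $(q-1)\delta^\ast(1-\delta^\ast)=(q-1)^2 q/(2q-1)^2$ is a perfect square, so the radical appearing in the definition of $M_q$ is rational. A short computation yields
$$x^\ast \;=\; \tfrac{1}{q}\!\left(q-1-(q-2)\delta^\ast-\tfrac{2(q-1)\sqrt{q}}{2q-1}\right) \;=\; \frac{(q-1)(\sqrt{q}-1)^2}{q(2q-1)}.$$
The claim thus reduces to the assertion that $\phi(q):=H_q(x^\ast)-\delta^\ast$ satisfies $\phi(q)\ge 0$ if and only if the prime power $q$ is at least $19$.

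Second I would study $\phi$ asymptotically by the substitution $u=1/\sqrt{q}$. Taylor-expanding gives $x^\ast = \tfrac12 - u + \tfrac14 u^2 + \tfrac12 u^3 + O(u^4)$ and $\delta^\ast = \tfrac12 - \tfrac14 u^2 + O(u^4)$. Plugging these into $H_q$, and using $\ln q=-2\ln u$ together with the expansion $-t\log t-(1-t)\log(1-t)=\log 2-2(\tfrac12-t)^2-\tfrac{4}{3}(\tfrac12-t)^4+O((\tfrac12-t)^6)$, one obtains
$$\phi(q) \;=\; \frac{\ln 2}{\ln q} \;-\; \frac{1}{\sqrt{q}} \;+\; \frac{1}{4q} \;+\; O\!\left(\tfrac{\ln q}{q}\right).$$
Since $1/\sqrt{q}$ decays faster than $\ln 2/\ln q$, this shows $\phi(q)\to 0^+$ and $\phi$ is eventually positive. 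By making the implied constant explicit, I would extract a concrete threshold $q_0$ past which $\phi(q)>0$ for all real $q\ge q_0$, and, ideally, $\phi$ is monotone there.

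This asymptotic control is the main obstacle. At $q=19$ the leading terms $\ln 2/\ln q\approx 0.235$ and $-1/\sqrt{q}\approx -0.229$ nearly cancel, so the true value $\phi(19)\approx 0.002$ is of the same order as the next corrections; an effective bound on the remainder on $[19,\infty)$ requires carrying several additional terms in the expansion. Once $q_0$ is pinned down, I would finish by evaluating $\phi$ numerically at each prime power in $\{2,3,4,5,7,8,9,11,13,16,17\}$ (observing $\phi(q)<0$) and at each prime power in $[19,q_0]$ (observing $\phi(q)\ge 0$). Because $18$ is not a prime power, no verification is needed in the gap $(17,19)$, and the monotonicity of $\phi$ on $[q_0,\infty)$ handles all remaining prime powers. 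As a sanity check, these numerical values agree with Table~\ref{table1}: for $q\le 17$ the MRRW-derived lower bounds on $\liminf i(k,q)/k$ strictly exceed the Plotkin value $(2q-1)/(q-1)$, whereas for $q\ge 19$ they do not.
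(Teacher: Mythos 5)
Your reduction to a single-variable inequality $\phi(q)\geq 0$ via the computation $A\left(\tfrac{q-1}{2q-1}\right)=\frac{(q-1)(\sqrt{q}-1)^2}{q(2q-1)}$ is exactly the paper's first step, and the Taylor expansions you record for $\delta^\ast$ and $x^\ast$ in $u=1/\sqrt{q}$ are correct. However, there is a genuine gap precisely where you locate "the main obstacle": you do not actually produce the effective remainder bound that would turn the asymptotic $\phi(q)=\tfrac{\ln 2}{\ln q}-\tfrac{1}{\sqrt q}+\tfrac{1}{4q}+O(\ln q/q)$ into a concrete threshold $q_0$. Since $\phi$ tends to $0$ from above with near-cancellation of the two leading terms already at $q=19$, the proof genuinely stands or falls on that quantitative step, and "I would carry several additional terms" is a plan, not a proof. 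As it stands the proposal reduces the theorem to an unproved explicit-constants claim plus numerics.

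The paper sidesteps this difficulty entirely. Writing $B=q(2q-1)-(q-1)(\sqrt q-1)^2$, $C=q(2q-1)$, $D=C-B=(q-1)(\sqrt q-1)^2$, and $g(x)=x\log_q x$, the target inequality becomes $g(B)-(q-1)\,g\!\left(\tfrac{D}{q-1}\right)+g(C)\geq q(q-1)$. The key move is to use convexity of $g$ to replace the hard-to-analyze difference $g(C)-g(B)$ by the tangent-line lower bound $D\,g'(B)=D\log_q(eB)$. After routine algebra (and using $(\sqrt q-1)^2\leq q$ and $B\geq q^2$ to weaken the bound), this reduces to the elementary scalar condition $1+\tfrac12\sqrt q\geq \ln q$. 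The function $f(x)=1+\tfrac12\sqrt x-\ln x$ is trivially increasing for $x\geq 16$ and $f(144)>0$, giving the explicit threshold $q_0=144$, and the range $19\leq q\leq 144$ is closed by direct computation. This convexity linearization is what your proposal lacks: it trades a sharp asymptotic analysis for a lossy but fully explicit sufficient condition, which is exactly what is needed to close the argument. If you want to pursue the Taylor-expansion route, you would need to establish, with explicit constants, that the remainder is dominated by $\tfrac{\ln 2}{\ln q}-\tfrac{1}{\sqrt q}$ on all of $[19,\infty)$, which is considerably more delicate than the paper's device.
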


\begin{proof}
First we must compute $A(x) = \frac{1}{q}\left(q-1-(q-2)x - 2 \sqrt{(q-1)x(1-x)}\right)$ with $x = \frac{q-1}{2q-1}$.
This yields
$$A\left(\frac{q-1}{2q-1}\right) = \frac{(q-1)(\sqrt{q}-1)^{2}}{q(2q-1)}.$$

We now want to know for what values of $q$ it is true that
$$M_{q}\left(\frac{q-1}{2q-1}\right) \geq \frac{q-1}{2q-1}.$$

Now, for simplicity, set $B(q) = q(2q-1) - (q-1)(\sqrt{q} - 1)^{2}$, $C(q) = (2q-1)q$ and $D(q) = (q-1)(\sqrt{q}-1)^{2}$, and define $g(x) = x\log_{q}(x)$. Note that $B(q)$, $C(q)$ and $D(q)$ are all positive.
By straightforward computations, the above inequality is equivalent to
$$g(B(q)) - (q-1)g\left(\frac{D(q)}{q-1}\right) + g(C(q)) \geq q(q-1).$$

Since $g$ is a convex function, and since $D(q) = C(q) - B(q)$, we can give the following lower bound
$$g(C(q))-g(B(q)) \geq D(q)g'(B(q)) = D(q) \log_{q}(eB(q)).$$

Therefore it is sufficient to establish
$$D(q) \log_{q}(eB(q)) - D(q)\log_{q}((\sqrt{q} - 1)^{2}) \geq q(q-1)$$
and this simplifies to
$$(\sqrt{q} -1)^{2} \log_{q}\left(\frac{eB(q)}{(\sqrt{q} - 1)^{2}}\right) \geq q.$$

Since $(\sqrt{q} - 1)^{2} \leq q$, in order for the above inequality to be true it is enough to have
\begin{align*}
(\sqrt{q} -1)^{2} \log_{q}\left(\frac{eB(q)}{q}\right) &\geq q \\
(\sqrt{q} -1)^{2} \log_{q}(eq) &\geq q \\
q &\geq 2\sqrt{q}(\ln(q) + 1) \\
1 + \frac{1}{2} \cdot \sqrt{q} &\geq \ln(q)
\end{align*}

Writing $f(x) = 1 + \frac{1}{2} \cdot \sqrt{x} - \ln(x)$, it is easy to check that $f$ is increasing as soon as $x \geq 16$.

In particular, straightforward computation shows that $f(144) > 0$, meaning that as soon as $q \geq 144$ we have
$$M_{q}\left(\frac{q-1}{2q-1}\right) \geq \frac{q-1}{2q-1}.$$

For the remaining values of $q$, that is for $19 \leq q \leq 144$, the theorem can be checked by direct computation.
\end{proof}

\subsection{Upper bounds}

The sparse tetrahedron construction, presented in the Example \ref{exa:tetra}, yields a non-2-cohyperplanar set of size $k(k+1)/2$ in $\PG(k-1, q)$, therefore providing an upper bound on $i(k, q)$, namely
$$i(k, q) \leq \frac{k(k+1)}{2}.$$
Note that this construction correspond to codes with parameters $\left[\frac{k(k+1)}{2}, k, k\right]$ (the minimum distance can be easily obtained by a geometric argument). In particular, this is not a family of asymptotically good codes.

The following result is an upper bound obtained from a probabilistic existence result (equivalent to taking random points in the projective space). Let us highlight the fact that this is already known for prime fields: for $q = 2$ it is due to Koml\'os (unpublished proof, 1983, cited in \cite{CL}), and for the more general case when $q$ is a prime, we know of no earlier proof than \cite[Theorem 7.3]{multiwise}. Our proof follows the same arguments and we write it explicitly for the sake of completeness.

Let us recall that the \emph{Gaussian coefficient} ${N \brack K}_{q}$ is the number of subspace of dimension $K$ is a vector space of dimension $N$ over $\F_q$, that is 
$${N \brack K}_{q}=\prod_{i=0}^{K-1}\frac{q^N-q^i}{q^K-q^i}.$$

\begin{theorem}\label{prob}
If 
$$n\geq \frac{2}{\log_{q}(\frac{q^{2}}{2q-1})}\ k$$
then an $[n,k,d]_q$ intersecting code, or equivalently, a non-2-cohyperplanar set in $\PG(k-1,q)$ of cardinality $n$, exists. Hence
$$\limsup_{k \rightarrow \infty} \frac{i(k, q)}{k} \leq \frac{2}{\log_{q}(\frac{q^{2}}{2q-1})}.$$
\end{theorem}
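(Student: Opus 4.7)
The plan is a standard probabilistic argument via the union bound: pick $n$ points $P_1,\ldots,P_n$ uniformly and independently at random from $\PG(k-1,q)$ and bound the probability that they fail to form a non-$2$-cohyperplanar set. If this probability is strictly less than $1$, then such a configuration exists, giving the desired upper bound on $i(k,q)$. Note that the projective system is allowed to be a multiset, so sampling with replacement is harmless.

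First I would collect the relevant counts. The number of hyperplanes of $\PG(k-1,q)$ is $(q^{k}-1)/(q-1)$, so the number of unordered pairs of distinct hyperplanes is at most
$$\frac{1}{2}\left(\frac{q^{k}-1}{q-1}\right)^{2}.$$
For two distinct hyperplanes $\mH_1,\mH_2$, inclusion-exclusion gives $|\mH_1\cup\mH_2|=(2q^{k-1}-q^{k-2}-1)/(q-1)$, since the intersection is a codimension-$2$ subspace with $(q^{k-2}-1)/(q-1)$ points. Dividing by $(q^{k}-1)/(q-1)$ and cross-multiplying, a direct check yields that the probability a single uniformly random point lies in $\mH_1\cup\mH_2$ is strictly less than $(2q-1)/q^{2}$.

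The independence of the $P_i$ then upgrades this to the bound $((2q-1)/q^{2})^{n}$ for the probability that \emph{all} the chosen points lie in a given union $\mH_1\cup\mH_2$. The union bound over pairs of hyperplanes gives that $P_1,\ldots,P_n$ are $2$-cohyperplanar with probability at most
$$\frac{1}{2}\left(\frac{q^{k}-1}{q-1}\right)^{2}\left(\frac{2q-1}{q^{2}}\right)^{n}.$$
Setting this quantity to be less than $1$ and taking $\log_q$ reduces the existence statement to the inequality $n\log_q\!\bigl(q^{2}/(2q-1)\bigr)\geq 2k-\log_q(2(q-1)^{2})$, which is implied by the hypothesis $n\geq 2k/\log_q(q^{2}/(2q-1))$ for large enough $k$ (and the residual small $k$ cases are handled trivially since $i(k,q)$ is finite). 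The $\limsup$ statement then follows by taking $n=\lceil 2k/\log_q(q^{2}/(2q-1))\rceil$.

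There is no real obstacle here; the only delicate point is book-keeping the additive $\log_q(2(q-1)^{2})$ discrepancy so that the stated threshold holds uniformly in $k$. This is routine: the additive term is absorbed either by the ceiling in the definition of $i(k,q)$, or by noting that the probabilistic argument leaves plenty of slack, since $\frac{1}{2}(q^{k}/(q-1))^{2}$ over-counts pairs (distinct hyperplanes are required, and we double-counted ordered pairs), and the ratio $|\mH_1\cup\mH_2|/|\PG(k-1,q)|$ is strictly less than $(2q-1)/q^{2}$ rather than equal to it. Either one of these savings is enough to compensate for the additive constant and conclude the clean statement.
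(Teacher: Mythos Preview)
Your argument is correct, but it is genuinely different from the paper's. The paper works on the coding side: it counts all $k$-dimensional subspaces of $\F_q^n$ and subtracts those that contain some pair $\{x,y\}$ with $\sigma(x)\cap\sigma(y)=\emptyset$; since there are $(2q-1)^n$ such pairs and each lies in ${n-2 \brack k-2}_q$ subspaces, the condition $(2q-1)^n{n-2\brack k-2}_q\le{n\brack k}_q$ guarantees an intersecting code, and this simplifies to the stated threshold. You instead work on the geometric side, sampling points of $\PG(k-1,q)$ and union-bounding over pairs of hyperplanes. The two computations are dual reformulations of one another and produce the same leading inequality $n\log_q(q^2/(2q-1))\ge 2k$. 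Your approach fits naturally with the paper's geometric emphasis and avoids Gaussian-binomial manipulations; the paper's approach sidesteps any worry about the random points spanning (though, as you implicitly use, a non-$2$-cohyperplanar set automatically spans). One small remark: your hedging about ``large enough $k$'' is unnecessary. Since $\log_q(2(q-1)^2)>0$ for every $q\ge 2$, the inequality $n\log_q(q^2/(2q-1))\ge 2k$ already implies $n\log_q(q^2/(2q-1))\ge 2k-\log_q(2(q-1)^2)$, so the bound holds for all $k$ without appealing to slack or to finiteness of $i(k,q)$.
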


\begin{proof}
We follow the classical counting arguments used, for example, in the well-know Gilbert–Var\-sha\-mov bound (see \cite[Theorem 2.10.8]{huffman2010fundamentals}). Again, we will use coding-theoretical language. 

Let 
$$\mathcal{B}_n=\{\{x,y\}\subseteq \F_q^n\mid \sigma(x)\cap\sigma(y)=\emptyset\}.$$ 
For each coordinate $i\in\{1,\ldots,n\}$ of a pair of vectors $\{x,y\}$ in $\mathcal{B}$ we have three possibilities: 
$$(x_i\neq 0\wedge y_i=0) \vee (x_i=0\wedge y_i\neq 0) \vee (x_i=0 \wedge y_i=0).$$
Hence $|\mathcal{B}|=(2(q-1)+1)^n=(2q-1)^n$.

Let 
$$\mathcal{F}_{n,k}=\{\C\subseteq \F_q^n\mid \dim\C =k \},$$
whose cardinality is clearly ${n \brack k}_{q}$.

Now, each pairs of vectors in $\mathcal{B}_n$ is contained in exactly ${n-2 \brack k-2}_{q}$ elements of $\mathcal{F}_{n,k}$. A code in $\mathcal{F}_{n,k}$ is interesting if and only if it does not contain any element of $\mathcal{B}_n$. Since there are at most 
$$(2q-1)^{n} \cdot{n-2 \brack k-2}_{q}$$
codes in $\mathcal{F}_{n,k}$ which contains an element of $\mathcal{B}_n$, if 
$$(2q-1)^{n} \cdot{n-2 \brack k-2}_{q} \leq {n \brack k}_{q}$$
then there exist intersecting codes with parameters $[n, k]_{q}$. By straightforward calculations we get that the above condition is implied by $q^{n\log_{q}(2q-1) + 2(k-n)} \leq 1$, hence the statement.
\end{proof}

\begin{corollary}
   Intersecting codes are asymptotically good.
\end{corollary}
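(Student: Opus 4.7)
The plan is to combine the two key results already established: Theorem \ref{prob} controls the rate, and Theorem \ref{thm:distance} controls the relative minimum distance for free.

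First, by Theorem \ref{prob}, for every sufficiently large $k$ there exists an intersecting code over $\F_q$ of dimension $k$ and length $n_k = \lceil c_q \cdot k \rceil$, where $c_q = \frac{2}{\log_q(q^2/(2q-1))}$ is a positive constant (depending only on $q$). Letting $k \to \infty$ along the integers yields a sequence of intersecting codes $\mC_k$ with parameters $[n_k, k, d_k]_q$ such that $n_k \to \infty$ and
\[
\liminf_{k \to \infty} \frac{k}{n_k} \geq \frac{1}{c_q} > 0.
\]

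Second, since each $\mC_k$ is intersecting, Theorem \ref{thm:distance} gives $d_k \geq k$. Therefore
\[
\liminf_{k \to \infty} \frac{d_k}{n_k} \geq \liminf_{k \to \infty} \frac{k}{n_k} \geq \frac{1}{c_q} > 0.
\]
Taking $\varepsilon = 1/c_q$ (or any smaller positive constant) verifies the definition of an asymptotically good family, and the corollary follows.

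There is no genuine obstacle here: the main content is packaged in Theorems \ref{prob} and \ref{thm:distance}. The only mild point worth noting is that the intersecting codes produced by Theorem \ref{prob} automatically inherit a linear lower bound on their minimum distance through Theorem \ref{thm:distance}; no additional random-coding argument for the distance is needed, which is a pleasant feature specific to the intersecting setting (contrast with the usual Gilbert--Varshamov argument, where both the rate and distance have to be controlled separately).
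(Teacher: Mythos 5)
Your proof is correct and takes essentially the same approach as the paper: it invokes Theorem \ref{prob} for existence of intersecting codes of length linear in the dimension and Theorem \ref{thm:distance} for the automatic linear lower bound on the minimum distance. The paper's proof is just a condensed version of your argument.
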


\begin{proof}
    Theorem \ref{prob} and Theorem \ref{thm:distance} yield that a family of 
$$\left[\frac{2}{\log_{q}(\frac{q^{2}}{2q-1})}\ k,k,\geq k\right]_q$$
intersecting codes exist. This is an asymptotically good family.
\end{proof}

\begin{remark}
Even though Theorem \ref{prob} provides a very good upper bound (converging to the same value as the lower bound for large $q$), it has the drawback of not providing guidance on how to \emph{explicitly} construct such small cardinality sets. The following section will be dedicated to such constructions.
\end{remark}

\subsection{Explicit examples for low dimensions and small base fields}

We end this section with a list of explicit computations of the actual value of $i(k, q)$ for small values of $k$ and $q$. We summarize in Table \ref{table2} all the known results, whose proof is given below. Whenever we write $[n_1,n_2]$ we mean that $i(k,q)$ is not known but contained in this interval. The colors indicate the argument used to prove the lower or upper bound, as we will explain at the end of this subsection.

\begin{table}[ht!]
\centering
\caption{Values of $i(k,q)$ for small $q$ and $k$}
\label{table2}
\begin{tabular}{c||c|c|c|c|c|c|c|c|c|}
\diagbox[width=1cm]{$q$}{$k$} & 2 & 3 & 4 & 5 & 6 & 7 & 8 & 9 \\
\hline
\hline
2 & 3 & 6 & 9 & 13 & 15 & 20 & 24 & 26 \\
\hline
3 & 3 & 6 & 9 & 10 & 13 &  $[\textcolor{blue}{17},\textcolor{green}{18}]$ & $[\textcolor{blue}{19},\textcolor{orange}{21}]$ & $[\textcolor{blue}{21},\textcolor{orange}{30}]$ \\
\hline
4 & 3 & 5 & 8 & 10 & $[\textcolor{blue}{12},\textcolor{green}{13}]$ & $[\textcolor{blue}{15},\textcolor{green}{16}]$ & $[\textcolor{blue}{17},\textcolor{orange}{21}]$ & $[\textcolor{blue}{21},\textcolor{orange}{25}]$ \\
\hline
5 & 3 & 5 & 8 & 10 & $[\textcolor{blue}{12},\textcolor{green}{13}]$ & $[\textcolor{blue}{15},\textcolor{green}{17}]$ & $[\textcolor{blue}{18},\textcolor{orange}{21}]$ & $[\textcolor{blue}{20},\textcolor{orange}{25}]$ \\
\hline
7 & 3 & 5 & 7 & 10 & $[\textcolor{blue}{12},\textcolor{green}{13}]$ & 14 & $[\textcolor{blue}{17},\textcolor{orange}{21}]$ & $[\textcolor{blue}{19},\textcolor{orange}{25}]$ \\
\hline
8 & 3 & 5 & 7 & 9 & $[\textcolor{blue}{12},\textcolor{green}{13}]$ & $[\textcolor{blue}{14},\textcolor{green}{15}]$ & $[\textcolor{blue}{16},\textcolor{orange}{21}]$ & $[\textcolor{blue}{19},\textcolor{orange}{25}]$ \\
\hline
9 & 3 & 5 & 7 & 9 & 12 & $[\textcolor{blue}{14},\textcolor{green}{15}]$ & $[\textcolor{blue}{16},\textcolor{orange}{21}]$ & $[\textcolor{blue}{18},\textcolor{orange}{25}]$ \\
\hline
\end{tabular}
\end{table}

For the first line of Table \ref{table2}, we refer to \cite{kurz2023divisible}, where these values are given in the context of minimal codes. 

Whenever $2k-1\leq q+1$, we may take a $[2k-1,k,k]_q$ MDS code, that is $2k-1$ points on an arc.

In dimension $2$, it is always sufficient to take $3$ distinct points.

For $q=3$, the $[6,3,3]_3$ code with generator matrix 
\[G=\begin{bmatrix}
 1&0&0&1&0&2\\
 0&1&0&2&2&1\\
 0&0&1&1&1&1
\end{bmatrix}\]
is intersecting and it is clearly the shortest (a $[5,3,\geq 3]_3$ code does not exist).

For $q=3$, there is no $[8,4,4]_3$ intersecting code by {\sc Magma} calculations, but an intersecting $[9,4,4]_3$ exists by concatenation (see Lemma \ref{lem:concatenation}). 

For $q=4$, the $[8,4,4]_4$ code with generator matrix 
\[G=\begin{bmatrix}
 1&0&0&0&0&1&1&1\\
 0&1&0&0&1&1&1&0\\
 0&0&1&0&1&0&1&\alpha\\
 0&0&0&1&0&\alpha&\alpha^2&1
\end{bmatrix}\]
(here $\alpha$ is a primitive element of $\F_4$) is intersecting and it is clearly the shortest (a $[7,4,\geq 4]_4$ code does not exist).

For $q=5$, the $[8,4,4]_5$ code with generator matrix
\[G=\begin{bmatrix}
1&0&0&0&1&0&3&4\\
0&1&0&0&4&2&4&0\\
0&0&1&0&0&4&2&4\\
0&0&0&1&4&1&3&4
\end{bmatrix}\]
is intersecting and it is clearly the shortest (a $[7,4,\geq 4]_5$ code does not exist).

For $q\in\{3,4,5,7\}$, the $[10,5,5]_q$ code with generator matrix $G_q$ equal to   
\begin{align*}
G_3=\begin{bmatrix}
1&0&0&0&0&1&2&2&2&1\\
0&1&0&0&0&1&1&1&0&1\\
0&0&1&0&0&1&1&0&2&2\\
0&0&0&1&0&2&1&2&2&0\\
0&0&0&0&1&0&2&1&2&2
\end{bmatrix}, & \qquad G_4=\begin{bmatrix}
1&0&0&0&0&\alpha&0&1&\alpha^2&\alpha\\
0&1&0&0&0&\alpha&\alpha^2&\alpha&\alpha^2&0\\
0&0&1&0&0&0&\alpha&\alpha^2&\alpha&\alpha^2\\
0&0&0&1&0&\alpha^2&\alpha&1&0&\alpha^2\\
0&0&0&0&1&\alpha^2&1&1&\alpha&1
\end{bmatrix},\\G_5=\begin{bmatrix}
1&0&0&0&0&0&1&1&1&1\\
0&1&0&0&0&1&0&2&4&3\\
0&0&1&0&0&4&2&1&4&3\\
0&0&0&1&0&4&1&2&3&4\\
0&0&0&0&1&4&3&0&1&4
\end{bmatrix}, & \qquad G_7=\begin{bmatrix}
1&0&0&0&0&0&1&1&1&1\\
0&1&0&0&0&1&3&6&6&3\\
0&0&1&0&0&3&4&2&5&1\\
0&0&0&1&0&5&5&0&4&2\\
0&0&0&0&1&6&1&6&6&0
\end{bmatrix},
\end{align*}
is intersecting  and it is clearly the shortest (a $[9,5,\geq 5]_q$ code does not exist).

For $q=3$, the $[13,6,6]_3$ code with generator matrix 
\[G=\begin{bmatrix}
1&0&0&0&0&0&2&1&1&0&0&2&2\\
0&1&0&0&0&0&2&0&2&1&0&2&1\\
0&0&1&0&0&0&1&1&2&2&1&1&0\\
0&0&0&1&0&0&0&1&1&2&2&1&1\\
0&0&0&0&1&0&1&2&0&1&2&0&2\\
0&0&0&0&0&1&2&2&0&0&1&1&2
\end{bmatrix}\]
is intersecting and it is the shorter: actually, there is no $[11,6,\geq 6]_3$ code and every $[12,6,6]_3$ is equivalent to the extended ternary Golay code (see \cite{pless1968uniqueness}), which is not intersecting.

For $q=9$, the $[12,6,6]_9$ code with generator matrix 
\[G=\begin{bmatrix}
1&0&0&0&0&0&\alpha^7&2&\alpha^6&\alpha^3&\alpha^6&0\\
0&1&0&0&0&0&2&\alpha&\alpha^7&\alpha&1&\alpha\\
0&0&1&0&0&0&2&\alpha^7&2&\alpha&\alpha^6&\alpha^2\\
0&0&0&1&0&0&\alpha^6&0&\alpha^2&1&1&\alpha\\
0&0&0&0&1&0&\alpha^3&\alpha^2&1&\alpha&\alpha^6&2\\
0&0&0&0&0&1&\alpha^3&\alpha^6&\alpha&\alpha&\alpha&\alpha^3
\end{bmatrix}\]
is intersecting and it is clearly the shortest (a $[11,6,\geq 6]_9$ code does not exist).

For $q=7$, the $[14,7,7]_7$ code with generator matrix 
\[G=\begin{bmatrix}
1&0&0&0&0&0&0&3&3&4&3&2&2&6\\
0&1&0&0&0&0&0&6&3&3&4&3&2&2\\
0&0&1&0&0&0&0&2&6&3&3&4&3&2\\
0&0&0&1&0&0&0&2&2&6&3&3&4&3\\
0&0&0&0&1&0&0&3&2&2&6&3&3&4\\
0&0&0&0&0&1&0&4&3&2&2&6&3&3\\
0&0&0&0&0&0&1&3&4&3&2&2&6&3
\end{bmatrix}\]
is intersecting and it is clearly the shortest (a $[13,7,\geq 7]_7$ code does not exist).

The lower bounds are all in blue and they follow from Theorem \ref{thm:distance} and from the database of the codes with the best known parameters in {\sc Magma}. The upper bounds in orange (that is, exactly for the columns corresponding to $k=8$ and $k=9$) may be obtained by concatenating the shortest intersecting codes over proper extensions (see Lemma \ref{lem:concatenation}). The bounds in green come from extensive research in {\sc Magma}. This has been done by starting from an $[n,k-1,d]_q$ optimal intersecting code and randomly building an $[n,k,d]_q$ code from it or simply taking the codes with the best known parameters in {\sc Magma}. 
\bigskip

\section{Small explicit constructions for large dimensions}\label{sec:costructions}

In the previous sections we provided bounds on the size of the smallest non-$2$-cohyperplanar sets in projective spaces of given dimensions, together with a non-constructive existence result. The sets presented in Example \ref{exa:arcs} meet the bound for small dimensions, as we have already observed. The aim of this section is to provide \emph{explicit} constructions of small non-$2$-cohyperplanar sets, or equivalently of short intesecting codes, for large dimensions.

\subsection{Algebraic geometry intersecting codes}

Algebraic geometry is a useful tool for constructing families of codes with good parameters.
These codes, called \emph{algebraic geometry codes}, are a generalization of Reed-Solomon codes: whereas Reed-Solomon codes are obtained from the evaluation of polynomials of bounded degree in several points of $\Fq$, algebraic geometry codes are obtained by evaluating polynomials from the Riemann-Roch space of a divisor over an algebraic curve over $\Fq$.
For a more extensive introduction to algebraic geometry codes, we refer the reader to \cite[Chapter 15]{huffman2021concise}. The family of algebraic geometry codes provides \emph{explicit} constructions of asymptotically good codes, some of which turn out to be intersecting. In many cases however, in order to obtain explicit constructions with maximum rate we need to concatenate algebraic geometry codes with well-chosen intersecting codes of low dimension.
For instance, when $q$ is a prime, there are no constructions of asymptotically good AG codes, meaning we must concatenate intersecting AG codes over some extension of $\Fq$ with suitable intersecting codes over $\Fq$.

\medskip

First we define the Singleton defect of a code.

\begin{definition}
Let $\C$ be a code with parameters $[n, k, d]_{q}$.
Its \emph{Singleton defect} is the quantity
$$\Delta = 1 - \frac{k+d}{n+1}.$$
\end{definition}

Notice that a code is MDS if and only if $\Delta = 0$, while a code with ``bad'' parameters has a large Singleton defect.

\begin{definition}
    The \emph{Ihara constant} of $\F_q$ is
$$A(q)=\limsup_{g(X)\to\infty}\frac{n(X)}{g(X)},$$
where $X$ ranges over all curves over $\F_q$, $n(X)=|X(\F_q)|$ is the number of rational points of $X$ and $g(X)$ is the genus of $X$.
\end{definition}

The best possible Singleton defect attainable by AG codes is $A(q)^{-1}$ (\cite[Chapter 15, Corollary 15.3.14]{huffman2021concise}). Note also that, provided the Singleton defect is at least $A(q)^{-1}$, any choice of parameters $R$ and $\delta$ that sum to $1 - A(q)^{-1}$ is attainable.
The Drinfeld-Vladut bound \cite[Theorem 2.3.22]{MR1186841} states that $A(q) \leq \sqrt{q} - 1$, which gives a lower bound on the best possible Singleton defect reachable by AG codes.
Nevertheless, there exist explicit constructions of AG codes with Singleton defect close to this lower bound.
Most notably, when $q$ is a square it is possible to reach the Drinfeld-Vladut bound, as first proved in \cite{Ihara1982SomeRO}.

\medskip

In \cite{randriambololona20132}, the author establishes the following theorem:

\begin{theorem}[Theorem 2, \cite{randriambololona20132}]\label{thm:randriam}
Suppose that $A(q) \geq 4$. Then there exists an explicit family of asymptotically good intersecting codes with asymptotic rate
$$R = \frac{1}{2} - \frac{1}{2A(q)}.$$
\end{theorem}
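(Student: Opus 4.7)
The plan is to construct the family as algebraic geometry (Goppa) codes, exploiting the multiplicative structure of the function field to control supports. Fix a smooth projective curve $X$ over $\F_q$ of genus $g$ with distinct rational points $P_1,\dots,P_n$, set $D=P_1+\cdots+P_n$, and take a divisor $G$ with support disjoint from $\{P_i\}$. I would work with the evaluation code $\mC=C_L(D,G)=\{(f(P_1),\dots,f(P_n))\St f\in L(G)\}$; the standard Goppa bounds give $k\geq \deg G + 1 - g$ and $d\geq n-\deg G$.

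The decisive observation is that Hadamard (componentwise) products of codewords lift to the function field: writing $c_i=\mathrm{ev}_D(f_i)$, one has $c_1\cdot c_2=\mathrm{ev}_D(f_1f_2)$ with $f_1f_2\in L(2G)$. Since the function field is an integral domain, $f_1f_2\neq 0$ whenever $f_1,f_2\neq 0$, so two nonzero codewords have intersecting supports (i.e.\ $c_1\cdot c_2\neq 0$) exactly when $f_1f_2$ does not lie in the kernel $L(2G-D)$ of $\mathrm{ev}_D\colon L(2G)\to \F_q^n$. Hence the sufficient condition I would aim to enforce is
\[ L(2G-D)=0. \]

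The naive way to guarantee this, namely $\deg(2G-D)<0$ (equivalently $\deg G<n/2$), would only yield asymptotic rate $\tfrac{1}{2}-\tfrac{1}{A(q)}$. To recover the stated $\tfrac{1}{2}-\tfrac{1}{2A(q)}$ I would allow $\deg(2G-D)$ to go all the way up to $g-1$: the locus of effective divisor classes inside $\mathrm{Pic}^{g-1}(X)$ (the theta divisor) is a proper subvariety, so a generic degree-$(g-1)$ class $[E]$ satisfies $L(E)=0$. Thus I would set $\deg G=\lfloor (n+g-1)/2\rfloor$, so that $\deg(2G-D)=g-1$, and pick $G$ with $[2G-D]$ non-special. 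This gives $k\geq (n-g+1)/2$ and $d\geq (n-g+1)/2$, hence $R\geq \tfrac{1}{2}-\tfrac{g}{2n}+O(1/n)$ and the same for~$\delta$.

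To conclude I would take a sequence of explicit curves with $n/g\to A(q)$ --- for example Garcia--Stichtenoth's tower when $q$ is a square, or a suitable analogue in general --- which yields $R\to \tfrac{1}{2}-\tfrac{1}{2A(q)}$; the hypothesis $A(q)\geq 4$ then forces both $R$ and $d/n$ above $3/8$, establishing asymptotic goodness. The main obstacle, and what I expect to be the technical heart of the proof, is making the choice of $G$ genuinely explicit rather than generic: within a concrete function field tower one has to work with $\F_q$-rational divisors only (so that the relevant coset of $2\cdot\mathrm{Pic}^0(X)(\F_q)$ actually meets the non-special locus), typically by picking $G=mQ$ for a distinguished rational place $Q$ and verifying $L(2mQ-D)=0$ from the Weierstrass semigroup of $Q$ in the tower.
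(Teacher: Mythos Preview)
The paper does not prove this theorem; it is quoted from \cite{randriambololona20132}, and the only commentary offered is the subsequent remark that ``the proof relies on non-trivial algebraic geometric arguments'' together with the observation that the cruder $\delta>1/2$ method would yield only $R=\tfrac12-\tfrac{1}{A(q)}$.

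That said, your sketch is a faithful outline of Randriambololona's actual argument: the observation that Hadamard products of codewords lift to products in the function field, the resulting criterion $L(2G-D)=0$ for the code to be intersecting, the key idea of pushing $\deg(2G-D)$ all the way up to $g-1$ and choosing the class of $2G-D$ off the theta divisor in $\mathrm{Pic}^{g-1}(X)$, and the rate computation $R\to\tfrac12-\tfrac{g}{2n}\to\tfrac12-\tfrac{1}{2A(q)}$ along an optimal tower. You also correctly identify the genuine technical content---arranging an \emph{explicit} $\F_q$-rational divisor $G$ with $[2G-D]$ non-special inside a concrete tower---which is precisely where the work in the cited paper lies; the hypothesis $A(q)\ge 4$ is used there, not merely to keep $R$ and $\delta$ positive.
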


\begin{remark}
The proof of Theorem \ref{thm:randriam} relies on non-trivial algebraic geometric arguments. A simpler way to construct intersecting AG codes would be to consider families of AG codes with $\delta > 1/2$, which are intersecting, by Lemma \ref{lem:ABinter}.
The best possible rate using this method is
$$R = \frac{1}{2} - \frac{1}{A(q)}.$$
Hence, Theorem \ref{thm:randriam} is an improvement over this simpler method.
\end{remark}

Theorem \ref{thm:randriam} often yields the best-known explicit constructions of intersecting codes over $\Fq$. However, when $q$ is small, or a prime, $A(q) \leq 4$. In these cases, it is therefore necessary to use concatenation in order to construct explicit sequences of intersecting AG codes of short length.

\begin{remark}\label{rem3}
Before diving into the details, we make one more observation.
The highest rate of a non-trivial intersecting code is attained by a code with parameters $[3, 2, 2]_q$ (over any base field), corresponding to three distinct points on the projective line.
Moreover, an intersecting AG code constructed with the above theorem must have rate lower than $1/2$. 
This means that any non-trivial concatenation of an intersecting code with intersecting AG codes must have a rate of at most $1/3$.
Consequently, if over some field $\Fq$ there are intersecting AG codes that have rate larger than $1/3$, there is no construction involving concatenation that will yield a better rate.
\end{remark}

\begin{theorem}\label{agthm}
The following upper bounds, which stem from \emph{explicit} constructions involving (possibly concatenated) AG codes, hold:
\begin{itemize}
\item if $q$ is a square and $q \geq 25$, then
$$\limsup_{k \rightarrow \infty} \frac{i(k, q)}{k} \leq 2 + \frac{2}{\sqrt{q} - 2};$$
\item if $q = p^{2m+1}$ is an uneven power of a prime (but not a prime) and $q \geq 32$, then
$$\limsup_{k \rightarrow \infty} \frac{i(k, q)}{k} \leq \frac{4}{2 - \frac{1}{p^{m}-1} - \frac{1}{p^{m+1}-1}},$$
\item If $q$ is a prime and $q \geq 11$, then
$$\limsup_{k \rightarrow \infty} \frac{i(k, q)}{k}\leq 3 + \frac{3}{q - 2}.$$
\end{itemize}
For the remaining values of $q$, Table \ref{table3} provides the upper bounds obtained by concatenating AG codes with suitable intersecting codes.
\begin{table}[ht!]
\centering
\caption{Upper bounds obtained with AG codes, for exceptional values of $q$}
\label{table3}
\begin{tabular}{cccc}
$q$  & Parameters of inner code & \begin{tabular}[c]{@{}c@{}}Upper bound for\\ $\limsup_{k \rightarrow \infty} i(k, q)/k$\end{tabular} & Probabilistic bound \\ \hline
$2$  & $[15, 6]_{2}$            & $5.8334$                                                                                             & $4.8189$            \\
$3$  & $[10, 5]_{3}$            & $4.3561$                                                                                             & $3.7382$            \\
$4$  & $[5, 3]_{4}$             & $4.1667$                                                                                             & $3.3539$            \\
$5$  & $[5, 3]_{5}$             & $3.9025$                                                                                             & $3.1507$            \\
$7$  & $[7, 4]_{7}$             & $3.5745$                                                                                             & $2.9331$            \\
$8$  & $[3, 2]_{8}$             & $3.5$                                                                                                & $2.8666$            \\
$9$  & $[3, 2]_{9}$             & $3.4286$                                                                                             & $2.8148$            \\
$16$ & $[3, 2]_{16}$            & $3.2143$                                                                                             & $2.6266$            \\
$27$ & $[3, 2]_{27}$            & $3.12$                                                                                               & $2.5146$           
\end{tabular}
\end{table}
\end{theorem}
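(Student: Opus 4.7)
The plan is to reduce each stated upper bound on $\limsup_{k\to\infty} i(k,q)/k$ to the construction of an explicit family of intersecting codes over $\F_q$ whose asymptotic rate equals the reciprocal of the claimed quantity (since an intersecting $[n,k]_q$ family forces $i(k,q)\le n$). The main engine will be Theorem~\ref{thm:randriam}, which, whenever $A(q)\ge 4$, yields an explicit family of intersecting AG codes over $\F_q$ of asymptotic rate $\tfrac{1}{2}-\tfrac{1}{2A(q)}$. Concatenation via Lemma~\ref{lem:concatenation} will then take care of base fields too small for a direct AG construction.

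For the square case $q\ge 25$ I would invoke the Ihara/Tsfasman--Vladut--Zink equality $A(q)=\sqrt{q}-1$: the threshold $q\ge 25$ is precisely the condition $A(q)\ge 4$, and Theorem~\ref{thm:randriam} then delivers rate $\tfrac{\sqrt{q}-2}{2(\sqrt{q}-1)}$, whose reciprocal simplifies to $2+\tfrac{2}{\sqrt{q}-2}$. For the non-prime odd-power case $q=p^{2m+1}$ I would apply the Bassa--Beelen--Garcia--Stichtenoth estimate
$$A(p^{2m+1})\ \ge\ \frac{2}{\tfrac{1}{p^{m}-1}+\tfrac{1}{p^{m+1}-1}},$$
verify by a short monotonicity check that this lower bound meets $A(q)\ge 4$ exactly from $q=32$ onward among admissible values (the value $q=27$ falls just below $4$ and is therefore treated separately in Table~\ref{table3}), and substitute into Theorem~\ref{thm:randriam} to recover $4/(2-\tfrac{1}{p^{m}-1}-\tfrac{1}{p^{m+1}-1})$ after elementary algebra.

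For primes $q$ the Ihara constant of $\F_q$ is not known to reach $4$, so I would instead apply Theorem~\ref{thm:randriam} over the quadratic extension $\F_{q^2}$, where $A(q^2)=q-1\ge 4$ as soon as $q\ge 5$, producing explicit intersecting codes over $\F_{q^2}$ of rate $\tfrac{q-2}{2(q-1)}$. These outer codes will be concatenated, via Lemma~\ref{lem:concatenation}, with the inner intersecting $[3,2,2]_q$ code given by three distinct points of $\PG(1,q)$ (whose dimension matches the extension degree, as required). The resulting concatenation has asymptotic rate $\tfrac{2}{3}\cdot\tfrac{q-2}{2(q-1)}=\tfrac{q-2}{3(q-1)}$, whose reciprocal is $3+\tfrac{3}{q-2}$. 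The threshold $q\ge 11$ in the statement is simply the range in which this generic $[3,2,2]_q$ inner code already outperforms any alternative concatenation with a larger inner code matched to a higher-degree extension; for smaller primes (and for the small non-prime $q$ with $A(q)<4$), the optimal inner code changes with $q$ and the best outcome is collected in Table~\ref{table3}.

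The main obstacle is not computational but organisational: one must pair each regime of $q$ with the correct Ihara lower bound (TVZ in one case, BBGS in the other), verify the $A(q)\ge 4$ hypothesis in each regime, and, in the prime case, confirm that the $[3,2,2]_q$ inner code is asymptotically optimal for the stated range. Remark~\ref{rem3} already imposes the ceiling of rate $\tfrac{1}{3}$ on any nontrivial concatenation of an intersecting AG outer with an intersecting inner, which is what guarantees that no hidden improvement is missed within this framework; confirming sharpness of the thresholds $q\ge 25$, $q\ge 32$, $q\ge 11$ is then a finite check that dovetails with the exceptional list.
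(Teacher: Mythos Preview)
Your proposal is correct and follows essentially the same approach as the paper: invoke Theorem~\ref{thm:randriam} directly when $A(q)\ge 4$ using the TVZ/Drinfeld--Vl\u{a}du\c{t} bound for squares and the Garcia--Stichtenoth-type bound for non-prime odd powers, and for primes concatenate intersecting AG codes over $\F_{q^2}$ with the $[3,2,2]_q$ inner code via Lemma~\ref{lem:concatenation}, with Remark~\ref{rem3} explaining why no concatenation improves the direct construction in the first two regimes. The only minor omission is that your parenthetical singles out $q=27$ as failing $A(q)\ge 4$ in the odd-power case, whereas $q=8$ fails as well (both are in Table~\ref{table3}); this does not affect the argument since your stated threshold $q\ge 32$ already excludes both.
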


\begin{proof}
In all three cases we call $R_{q}$ the largest rate reached by AG codes over $\Fq$.
Recall that we will then obtain the asymptotic upper bound
$$\limsup_{k \rightarrow \infty} \frac{i(k, q)}{k} \leq R_{q}^{-1}.$$

\begin{itemize}
    \item As we have already mentioned, when $q$ is a square, there are explicit constructions of AG codes reaching the Drinfeld-Vladut bound, that is, with Singleton defect equal to $(\sqrt{q} - 1)^{-1}$. The best possible rate yielded by Theorem \ref{thm:randriam} is
$$R_{q} = \frac{1}{2} - \frac{1}{2(\sqrt{q}-1)}.$$
For the hypotheses of Theorem \ref{thm:randriam} to be verified, we must have $q \geq 25$.
Note that Remark~\ref{rem3} tells that we do not need to concatenate, since as soon as $q \geq 25$, we have $R \geq 3/8 \geq 1/3$.

    \item When $q = p^{2m+1}$ (with $m\geq 1$), a prominent result shown in \cite{garciastichtnoth2015} provides an explicit construction of AG codes satisfying the lower bound 
$$A(q) \geq 2 \left( \frac{1}{p^{m}-1} + \frac{1}{p^{m+1}-1} \right)^{-1}.$$
In order to satisfy the hypothesis of Theorem \ref{thm:randriam}, we need $A(q) \geq 4$.
This is the case for every value of $q$ except $q = 8$ and $q = 27$.
Again, in this case Remark \ref{rem3} tells that we do not need to concatenate.

\item If $q$ is a prime, there are no explicit constructions of asymptotically good intersecting AG codes. Hence we need to concatenate. When $q \geq 11$ it is straightforward to check that concatenating AG codes over $\F_{q^2}$ whose parameters meet the Drinfeld-Vladut bound with a $[3, 2, 2]_q$ code will always produce the shortest explicit construction. This yields
$$R_{q} \geq \frac{2}{3} \cdot \left( \frac{1}{2} - \frac{1}{2(q-1)} \right) = \frac{1}{3} - \frac{1}{3(q-1)}.$$
\end{itemize}
The remaining cases are obtained by concatenation, in each case using Theorem \ref{thm:randriam} and the best known lower bounds on $A(q)$ to obtain the best possible rate for the outer code.
\end{proof}

\begin{remark}
[Comparison with the probabilistic bound of Theorem \ref{prob}]
The bound from Theorem \ref{agthm} outperforms the probabilistic bound exactly in the following cases:
\begin{itemize}
    \item if $q\geq 49$ is a square;
    \item if $q\geq 128$ is an odd power of a prime.
\end{itemize}
Hence the asymptotic upper bound provided by Theorem \ref{agthm} is best for almost all non-prime $q$.
Moreover, this is a \emph{constructive} bound: the codes that reach it can be explicitly constructed in polynomial time, as noted in \cite{randriambololona20132}.    
\end{remark}

\begin{remark}
Recall that in the binary case intersecting codes coincide with minimal codes (see Lemma \ref{lem:intermin}).
There exist numerous short constructions of minimal codes, for instance in \cite{bartoli2023small,alon2023strong,CZ}.
To the best of our knowledge, the shortest explicit construction was given in \cite{CZ}.
The construction recorded in Table \ref{table3} provides an improvement and it is, to the best of our knowledge, the shortest \emph{explicit} construction of minimal codes over $\mathbb{F}_{2}$.
\end{remark}

\medskip

\subsection{A construction using expander graphs}

In this subsection we provide an explicit construction of non-2-cohyperplanar sets using expander graphs, based on the approach used in \cite{alon2023strong} for strong blocking sets. Even though the resulting construction will be longer than the one in the previous subsection, we believe that it is still interesting because it provides a geometric insight into non-2-cohyperplanar sets, as well as a link with other well-known combinatorial and geometric objects. We will construct small sets of lines with avoidance property and we will take $3$ points on each line, obtaining a small non-$2$-cohyperplanar set, by Proposition \ref{pro:3points}.

\begin{definition}
Let $\mathcal{G}=(V,E)$ be a graph with $n$ vertices, say $V=\{u_1,\ldots,u_n\}$. The \emph{adjacency matrix}  $A_\mathcal{G}$ of  $\mathcal{G}$ is the $n\times n$ matrix with coefficients $a_{i,j}=|\{\text{edges connecting } u_i\text{ and } u_j\}|$.  
\end{definition}

The matrix is clearly diagonalizable over the real field (it is symmetric). Let us call $\lambda_1(\mathcal{G})\geq \lambda_2(\mathcal{G})\geq \ldots \geq \lambda_n(\mathcal{G})$ its eigenvalues. Recall that if $\mathcal{G}$ is $t$-regular, then $\lambda_{1}(\mathcal{G}) = t$.
We also define
$$\lambda(\mathcal{G}) = \max\{|\lambda_2(\mathcal{G})|, \dots, |\lambda_n(\mathcal{G})|\}.$$

\begin{definition}
An \emph{$(n, t, \lambda)$-graph} $\mathcal{G}$ is a $t$-regular graph with $n$ vertices such that 
$|\lambda(\mathcal{G})| \leq \lambda$.\\
A $t$-regular graph $\mathcal{G}$  with $\lambda(\mathcal{G}) \leq 2\sqrt{t-1}$ is called \emph{Ramanujan graph}.
\end{definition}

\begin{theorem}[Alon-Bopanna] \label{alonbopanna}
For an $(n, t, \lambda)$-graph, 
$$\lambda \geq 2\sqrt{t-1} - o(1)$$
as $n \rightarrow \infty$.
\end{theorem}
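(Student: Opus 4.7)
The plan is to use the trace method, contrasting a spectral upper bound with a combinatorial lower bound on the number of closed walks of length $2k$ in $\mathcal{G}$. On the spectral side, using $\lambda_{1}(\mathcal{G}) = t$ and $|\lambda_{i}(\mathcal{G})| \leq \lambda(\mathcal{G})$ for $i \geq 2$,
$$\mathrm{tr}(A_{\mathcal{G}}^{2k}) \;=\; \sum_{i=1}^{n} \lambda_{i}(\mathcal{G})^{2k} \;\leq\; t^{2k} + (n-1)\,\lambda(\mathcal{G})^{2k}.$$
On the combinatorial side, $\mathrm{tr}(A_{\mathcal{G}}^{2k})$ is exactly the number of closed walks of length $2k$ in $\mathcal{G}$. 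Without loss of generality I assume $\mathcal{G}$ is connected and non-bipartite: in the bipartite case $\lambda_{n}(\mathcal{G}) = -t$ forces $\lambda(\mathcal{G}) = t \geq 2\sqrt{t-1}$ trivially.

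The next step is to lower bound the closed-walk count via the universal cover. Since the universal cover of any $t$-regular graph is the infinite $t$-regular tree $\mathbb{T}_{t}$, and since closed walks from the root $\tilde v$ of $\mathbb{T}_{t}$ project to closed walks from $v$ in $\mathcal{G}$ (though a closed walk in $\mathcal{G}$ may lift to a non-closed walk in $\mathbb{T}_{t}$ ending at a different preimage of $v$), writing $r_{2k}$ for the number of closed walks of length $2k$ starting at the root of $\mathbb{T}_{t}$ gives $(A_{\mathcal{G}}^{2k})_{vv} \geq r_{2k}$ for every vertex $v$, hence $\mathrm{tr}(A_{\mathcal{G}}^{2k}) \geq n\, r_{2k}$. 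Combining with the upper bound,
$$\lambda(\mathcal{G})^{2k} \;\geq\; \frac{n\, r_{2k} - t^{2k}}{n-1}.$$
The key analytic input is then $r_{2k}^{1/(2k)} \to 2\sqrt{t-1}$ as $k \to \infty$, which is Kesten's formula for the spectral radius of $\mathbb{T}_{t}$. This can be obtained elementarily by a Catalan-type count: every closed walk of length $2k$ on $\mathbb{T}_{t}$ is a Dyck path of length $2k$ (up-step $=$ move away from the root, down-step $=$ move toward the root) together with a choice of one of at least $t-1$ neighbors at each up-step, yielding $r_{2k} \geq C_{k}(t-1)^{k}$ with $C_{k} = \binom{2k}{k}/(k+1)$. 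Stirling's formula gives $C_{k}^{1/(2k)} \to 2$, and the asymptotic follows.

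To conclude, given $\varepsilon > 0$, fix $k$ large enough that $r_{2k}^{1/(2k)} \geq 2\sqrt{t-1} - \varepsilon/2$; then as $n \to \infty$ the term $t^{2k}/(n-1)$ becomes negligible, yielding $\lambda(\mathcal{G}) \geq 2\sqrt{t-1} - \varepsilon$ eventually. A uniform $o(1)$ statement follows by letting $k = k(n)$ grow slowly with $n$, e.g.\ $k(n) = \lfloor \log_{t}(n)/3 \rfloor$, so that $t^{2k(n)} = o(n)$ while $r_{2k(n)}^{1/(2k(n))}$ still approaches $2\sqrt{t-1}$. The main obstacle will be the careful execution of the tree-walk estimate: producing the optimal constant $2\sqrt{t-1}$, rather than the weaker $\sqrt{t-1}$ that comes from naive non-backtracking counts, requires keeping the Catalan factor $C_{k}$ in the lower bound and applying Stirling with enough precision to survive the $(2k)$-th root. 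Balancing the growth of $k$ against $n$ in the final limit is then routine.
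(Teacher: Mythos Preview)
The paper does not prove this statement; Theorem~\ref{alonbopanna} is simply quoted as the classical Alon--Boppana bound, with no argument supplied. Your proposal therefore cannot be compared to a proof in the paper, but it is a correct and standard derivation via the trace method: bounding $\mathrm{tr}(A_{\mathcal{G}}^{2k})$ from above spectrally and from below by the tree return probability $r_{2k}$, then invoking the Catalan lower bound $r_{2k}\geq C_k(t-1)^k$ and letting $k$ grow slowly with $n$. One minor remark: the reduction to the connected non-bipartite case is unnecessary, since the inequality $(A_{\mathcal{G}}^{2k})_{vv}\geq r_{2k}$ holds at every vertex of any $t$-regular graph regardless of connectivity or bipartiteness, and the rest of the argument goes through unchanged.
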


In \cite{nogaexpander}, the author proves the following.
\begin{theorem}[Theorem 1.3, \cite{nogaexpander}] \label{explicitexpander}
For every degree $t$, every $\varepsilon$ and all sufficiently large $n \geq n_{0}(t, \varepsilon)$, where $nt$ is even, there is an explicit construction of an $(n, t, \lambda)$-graph with
$$\lambda \leq 2\sqrt{t-1} + \varepsilon.$$
\end{theorem}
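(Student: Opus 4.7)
The theorem is due to Alon \cite{nogaexpander} and is invoked in our paper as a black box; the plan below sketches the proof strategy of that paper rather than giving an independent argument. The high-level idea is to combine explicit Ramanujan graphs of special degrees with the theory of graph lifts, and then to derandomize the resulting existence statement.

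First, for degrees of the form $t = p+1$ with $p$ a prime (or a prime power), the Lubotzky--Phillips--Sarnak and Morgenstern constructions give explicit Ramanujan graphs, i.e.\ $(n, t, 2\sqrt{t-1})$-graphs, but only for specific values of $n$ lying in an arithmetic progression. These will serve as base graphs for the general construction.

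Second, for arbitrary degree $t$ and arbitrary $n$, one iteratively takes $2$-lifts of a suitable base graph. A $2$-lift of a $t$-regular graph $\mG_0$ is a $t$-regular graph on $2|V(\mG_0)|$ vertices whose spectrum decomposes as the union of the spectrum of $\mG_0$ and the spectrum of a certain $\pm 1$-signed adjacency matrix; the Marcus--Spielman--Srivastava interlacing-families theorem produces a signing whose spectral radius stays within the Ramanujan bound $2\sqrt{t-1}$. Iterating yields $t$-regular graphs on $2^j m$ vertices with the desired spectral gap, and small local modifications (adding or removing a bounded number of vertices and edges) then cover every sufficiently large $n$ with $nt$ even, at the cost of an $\varepsilon$ perturbation of the spectral bound.

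The main obstacle, and the content of \cite{nogaexpander}, is to make the Marcus--Spielman--Srivastava argument \emph{explicit}. Their proof is existential via interlacing families and does not a priori yield a polynomial-time procedure for finding a good signing. Alon's contribution is to replace the random signing by one drawn from a small, explicitly constructible pseudorandom family, using trace-method estimates to control the top eigenvalue of the signed adjacency matrix to within an additive $\varepsilon$ error; combined with the lifting and patching scheme above, this yields the theorem. I expect that the derandomization step, rather than the lifting machinery, is by far the most delicate part of the argument.
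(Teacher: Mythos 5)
The paper quotes this result verbatim as Theorem 1.3 of \cite{nogaexpander} and uses it as a black box --- there is no internal proof to compare against, exactly as you note at the outset. So the only thing to assess is whether your account of \cite{nogaexpander} is faithful, and I do not think it is.

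What you describe --- iterated $2$-lifts, Bilu--Linial signings controlled by the Marcus--Spielman--Srivastava interlacing-families theorem, and a derandomization of the signing choice --- is the route taken by Mohanty, O'Donnell and Paredes, not by Alon. Alon's construction instead starts from known explicit Ramanujan (or near-Ramanujan) graphs and modifies and packs them by a sequence of elementary, local graph surgeries (rewiring a bounded number of edges, gluing small gadgets, adjusting the vertex count by small deletions and insertions); the spectral control rests on a delocalization argument for eigenvectors of regular graphs with few short cycles, not on interlacing families. Part of the point of \cite{nogaexpander}, reflected in the word ``size'' in its title, is precisely to hit every $n$ (rather than some $n' = \Theta(n)$) via these surgeries, and derandomizing MSS plays no role in that. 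Separately, your sketch glosses over a real difficulty with the route you propose: in the non-bipartite setting the MSS signing theorem controls only the largest new eigenvalue of a $2$-lift, not the smallest, so obtaining the two-sided bound $|\lambda_i| \le 2\sqrt{t-1}+\varepsilon$ from iterated $2$-lifts is a delicate point that MOP had to address directly and that your account leaves unexamined.
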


The following is an invariant of graphs, which will be fundamental to get our construction.

\begin{definition}
Let $\mathcal{G} = (V, E)$ be a simple connected graph. For any subgraph $\mathcal{H}$, let $\kappa(\mathcal{H})$
denote the largest size of a connected component in $\mathcal{H}$. The \emph{integrity} of $\mathcal{G}$ is the integer
$$\iota(\mathcal{G})=\min\{|S|+\kappa(\mathcal{G}-S)\mid S\subseteq V\}.$$
\end{definition}

\begin{proposition}[Corollary 3.4, \cite{alon2023strong}]
For an $(n, t, \lambda)$-graph $\mathcal{G}$,
$$\iota(\mathcal{G}) \geq n \cdot \frac{t-\lambda}{t+\lambda}.$$    
\end{proposition}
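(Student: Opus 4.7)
The plan is to use the Expander Mixing Lemma (EML) on a balanced bipartition. Fix $S \subseteq V(\mathcal{G})$ and write $s = |S|$ and $c = \kappa(\mathcal{G} - S)$; the goal is $s + c \geq n(t-\lambda)/(t+\lambda)$. If $\mathcal{G} - S$ is connected then $s + c = n$ and there is nothing to prove, so assume $\mathcal{G} - S$ has at least two connected components, each of size at most $c$.

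First, I would partition $V \setminus S$ into two nonempty sets $X, Y$, each a union of components of $\mathcal{G} - S$, by a greedy longest-processing-time assignment: order the components by decreasing size and place each one into the currently smaller side. The standard LPT argument (all components have size $\leq c$) gives $\bigl| |X| - |Y| \bigr| \leq c$, and combined with $|X| + |Y| = n - s$ this forces $\min(|X|, |Y|) \geq (n-s-c)/2$. Because $X$ and $Y$ are unions of distinct components of $\mathcal{G} - S$, no edge of $\mathcal{G}$ joins them, so $e(X, Y) = 0$.

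Next, I would apply the sharp spectral form of the EML. Decomposing $\mathbf{1}_X = (|X|/n)\mathbf{1} + y_X$ with $y_X \perp \mathbf{1}$ (and similarly for $Y$), the $t$-regularity of $\mathcal{G}$ kills the cross terms in $\mathbf{1}_X^T A_{\mathcal{G}} \mathbf{1}_Y$, while $|y_X^T A_{\mathcal{G}} y_Y| \leq \lambda \|y_X\|\|y_Y\|$ yields
$$\left| e(X,Y) - \frac{t|X||Y|}{n} \right| \;\leq\; \frac{\lambda}{n}\sqrt{|X|\,(n-|X|)\,|Y|\,(n-|Y|)}.$$
Substituting $e(X,Y) = 0$ and squaring gives $t^2|X||Y| \leq \lambda^2 (n-|X|)(n-|Y|)$. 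Since the function $x \mapsto x/(n-x)$ is increasing, setting $\mu := \min(|X|, |Y|)$ the product bound collapses to $\mu/(n-\mu) \leq \lambda/t$, i.e.\ $\mu \leq \lambda n/(t+\lambda)$. Combining with $\mu \geq (n-s-c)/2$ and rearranging delivers $s + c \geq n(t-\lambda)/(t+\lambda)$.

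The genuine subtlety is choosing the right form of the EML: the coarser version with error term $\lambda\sqrt{|X||Y|}$ would only yield a bound of shape $(t-2\lambda)/t$, whereas the extra factor $\sqrt{(n-|X|)(n-|Y|)}/n$ in the refined form above is precisely what produces the optimal constant $(t-\lambda)/(t+\lambda)$. Everything else — the LPT partition lemma and the final monotonicity manipulation — is routine.
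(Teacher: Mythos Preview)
The paper does not actually prove this proposition: it is merely quoted from \cite{alon2023strong} (their Corollary 3.4) and used as a black box in the subsequent construction. So there is no ``paper's own proof'' to compare against.

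That said, your argument is correct and is essentially the standard route to this bound. The greedy LPT split into $X$ and $Y$ with $\bigl||X|-|Y|\bigr|\le c$ is sound (the last component placed into the larger side witnesses the difference bound), both sides are nonempty once $\mathcal{G}-S$ has at least two components, and $e(X,Y)=0$ is clear. Your use of the sharp Expander Mixing Lemma with the extra $\sqrt{(n-|X|)(n-|Y|)}/n$ factor is exactly what is needed; the deduction $t^2|X||Y|\le\lambda^2(n-|X|)(n-|Y|)\Rightarrow \mu/(n-\mu)\le\lambda/t$ via monotonicity of $x\mapsto x/(n-x)$ is clean, and the final rearrangement $(n-s-c)/2\le \lambda n/(t+\lambda)$ indeed gives $s+c\ge n(t-\lambda)/(t+\lambda)$. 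Your closing remark that the cruder EML error term $\lambda\sqrt{|X||Y|}$ would only yield a $(t-2\lambda)/t$-type constant is also accurate and a nice diagnostic observation.

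One cosmetic point: you might state explicitly that in the disconnected case $c<n-s$ (so the lower bound on $\mu$ is positive), and that the boundary case $S=V$ gives $s+c=n$ trivially; but neither affects correctness.
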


The next result, proved in \cite{alon2023strong}, is the link between the theory of expander graphs and lines with the avoidance property, and then with strong blocking sets and non-2-cohyperplanar sets.

\begin{proposition}[Lemma 4.4, \cite{alon2023strong}] \label{propexp}
Let $\mathcal{M}=\{P_1,\ldots,P_n\}\subseteq \PG(k-1,q)$ be a projective $[n,k,d]_q$ system and $\mathcal{G}=(\mathcal{M},E)$ a graph. If $$\iota(\mathcal{G})\geq n-d+1,$$ 
then  the set of lines 
$$\mathcal{L}(\mathcal{M},\mathcal{G})=\{\langle P_i,P_j\rangle \mid P_iP_j\in E \}$$
satisfies the avoidance property.\\
Hence, if $\mathcal{G}$ is an $(n,t,\lambda)$-graph and
$$\frac{t-\lambda}{t+\lambda}\geq 1-\delta+\frac{1}{n},$$
then $\mathcal{L}(\mathcal{M},\mathcal{G})$ satisfies the avoidance property.
\end{proposition}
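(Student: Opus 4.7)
The plan is to argue by contradiction: suppose there is a codimension-$2$ projective subspace $V \subseteq \PG(k-1,q)$ meeting every line in $\mathcal{L}(\mathcal{M},\mathcal{G})$, and exhibit a vertex set $S \subseteq \mathcal{M}$ with $|S| + \kappa(\mathcal{G} - S) \leq n - d$, contradicting the hypothesis $\iota(\mathcal{G}) \geq n - d + 1$. The natural candidate is $S := \mathcal{M} \cap V$.

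First I would analyze a single surviving edge $P_iP_j$ of $\mathcal{G} - S$. Since neither endpoint lies in $V$, the line $\ell = \langle P_i, P_j\rangle$ is not contained in $V$, but still meets $V$ by our standing assumption, necessarily in a unique point $Q$. A Grassmann count gives $\dim(\ell + V) = 2 + (k-2) - 1 = k - 1$ as vector spaces, so $\ell$ and $V$ together span a well-defined hyperplane $\mathcal{H}_{ij}$ of $\PG(k-1,q)$ containing both $\ell$ and $V$.

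The step I expect to be the most delicate is showing that, in a fixed connected component of $\mathcal{G} - S$, all edges lie in the same hyperplane through $V$. Consider two adjacent edges sharing a vertex $P \notin V$, with associated hyperplanes $\mathcal{H}$ and $\mathcal{H}'$, both containing $V$. If $\mathcal{H} \neq \mathcal{H}'$, then $P \in \mathcal{H} \cap \mathcal{H}' = V$ (two distinct hyperplanes both containing a codimension-$2$ subspace meet exactly in that subspace), contradicting $P \notin V$. Propagating along paths, every connected component $C$ of $\mathcal{G} - S$ is contained in $\mathcal{M} \cap \mathcal{H}_C$ for a single hyperplane $\mathcal{H}_C$ containing $V$.

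To conclude, I would invoke the minimum distance: $|\mathcal{M} \cap \mathcal{H}| \leq n - d$ for every hyperplane $\mathcal{H}$. Since $V \subseteq \mathcal{H}_C$, we have $S \subseteq \mathcal{M} \cap \mathcal{H}_C$, so $|C| \leq (n-d) - |S|$, whence $|S| + \kappa(\mathcal{G} - S) \leq n - d$, the promised contradiction. For the second assertion I would simply plug in the integrity lower bound $\iota(\mathcal{G}) \geq n(t-\lambda)/(t+\lambda)$ recalled just before: assuming $(t-\lambda)/(t+\lambda) \geq 1 - \delta + 1/n$ and using $d = \delta n$ yields $\iota(\mathcal{G}) \geq n - d + 1$, placing us in the hypothesis of the first part.
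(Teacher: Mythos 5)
Your proof is correct, and since the paper cites this result from Alon--Bishnoi--D'haeseleer--Gijswijt (their Lemma 4.4) without reproducing a proof, there is nothing internal to compare against; your argument is the natural one. The key steps all check out: $S = \mathcal{M}\cap V$ is the right separator, the Grassmann computation $\dim(\ell+V)=k-1$ shows each surviving edge spans a hyperplane through $V$, the fact that two distinct hyperplanes through the codimension-$2$ space $V$ intersect exactly in $V$ forces the hyperplane to be constant along any edge-path, and the bound $|\mathcal{M}\cap\mathcal{H}|\le n-d$ together with $S\sqcup C\subseteq \mathcal{M}\cap\mathcal{H}_C$ gives $|S|+\kappa(\mathcal{G}-S)\le n-d$, contradicting $\iota(\mathcal{G})\ge n-d+1$. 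The second assertion then follows by plugging in $\iota(\mathcal{G})\ge n(t-\lambda)/(t+\lambda)$ and $d=\delta n$.

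The only small gap is that your ``propagating along paths'' step silently assumes each connected component of $\mathcal{G}-S$ contains at least one edge. A component that is a single isolated vertex $P\notin V$ needs a separate (trivial) sentence: take $\mathcal{H}_C=\langle V,P\rangle$, which is a hyperplane because $P\notin V$, and the same counting applies. With that one-line patch the argument is airtight.
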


Combining Theorem \ref{alonbopanna}, Theorem \ref{explicitexpander}, Proposition \ref{propexp} and Proposition \ref{pro:3points}, we obtain the following result.

\begin{theorem}\label{intersectingexpanderconstruction}
Assume that there is an explicit construction of projective $[n, Rn, \delta n]_{q}$ systems and an integer $t$ such that
$$\frac{t-2\sqrt{t-1}}{t+2\sqrt{t-1}} > 1 - \delta.$$
Then there exist \emph{explicit} families of non-2-cohyperplanar sets with size tending to
$$\left(1+ \frac{t}{2}\right)n,$$
as $n \rightarrow \infty$. 
\end{theorem}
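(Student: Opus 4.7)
The plan is to build the non-$2$-cohyperplanar set by superimposing an explicit expander graph on the vertex set of the given projective system and then adjoining exactly one extra point on each edge. This transports the avoidance-property machinery of Proposition \ref{propexp} into the 3-points-per-line hypothesis of Proposition \ref{pro:3points}, while the $nt/2$ edges of a $t$-regular graph on $n$ vertices set the size budget at $n + nt/2 = (1+t/2)n$.

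\emph{Calibrating the slack.} Using the strict hypothesis and the continuity of the map $\varepsilon \mapsto \frac{t-(2\sqrt{t-1}+\varepsilon)}{t+(2\sqrt{t-1}+\varepsilon)}$, I fix $\varepsilon > 0$ small enough that
$$\eta \;:=\; \frac{t-(2\sqrt{t-1}+\varepsilon)}{t+(2\sqrt{t-1}+\varepsilon)} - (1-\delta) \;>\; 0.$$
For every sufficiently large $n$ with $nt$ even, Theorem \ref{explicitexpander} then supplies an explicit $(n,t,\lambda_n)$-graph $\mathcal{G}_n$ with $\lambda_n \leq 2\sqrt{t-1}+\varepsilon$. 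Identify its vertex set with the $n$ points of the projective $[n,Rn,\delta n]_q$ system $\mathcal{M}_n$ guaranteed by the hypothesis.

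\emph{Applying the two propositions.} For $n > 1/\eta$,
$$\frac{t-\lambda_n}{t+\lambda_n} \;\geq\; 1-\delta+\eta \;\geq\; 1-\delta+\frac{1}{n},$$
so Proposition \ref{propexp} ensures that $\mathcal{L}(\mathcal{M}_n,\mathcal{G}_n)$ has the avoidance property. Each line $\langle P_i,P_j\rangle$ in $\mathcal{L}(\mathcal{M}_n,\mathcal{G}_n)$ already contains the two endpoints $P_i,P_j \in \mathcal{M}_n$, and since a projective line has $q+1 \geq 3$ points, I can pick a third point on it distinct from $P_i$ and $P_j$. Form $\mathcal{S}_n$ by adjoining one such third point for every edge of $\mathcal{G}_n$. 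Proposition \ref{pro:3points} then yields that $\mathcal{S}_n$ is non-$2$-cohyperplanar, with
$$|\mathcal{S}_n| \;\leq\; n + |E(\mathcal{G}_n)| \;=\; n + \frac{nt}{2} \;=\; \left(1+\frac{t}{2}\right)n.$$

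\emph{Explicitness and main difficulty.} Both $\mathcal{M}_n$ and $\mathcal{G}_n$ are explicit by assumption, and the choice of a third point per line is trivially algorithmic, so the entire family $\{\mathcal{S}_n\}$ is explicit. There is no deep obstacle: the only point requiring care is the simultaneous control of three asymptotic error terms — the slack $\varepsilon$ permitted by the near-Ramanujan construction, the $1/n$ correction inside Proposition \ref{propexp}, and the strict inequality in the hypothesis — and this is precisely what the positive margin $\eta$ is engineered to absorb as soon as $n$ is large enough.
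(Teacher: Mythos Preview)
Your proof is correct and follows essentially the same approach as the paper: superimpose an explicit near-Ramanujan $(n,t,\lambda)$-graph from Theorem~\ref{explicitexpander} on the projective system, invoke Proposition~\ref{propexp} to get the avoidance property, and then apply Proposition~\ref{pro:3points} by keeping the $n$ vertices and adding one extra point per edge. Your handling of the $\varepsilon$--$1/n$ slack via the explicit margin $\eta$ is in fact more carefully spelled out than in the paper's own proof.
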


\begin{proof}
Let $\varepsilon > 0$ and choose $n \geq n_{0}(t, \varepsilon)$ such that $nt$ is even.
We call $\mathcal{M}$ the projective $[n, Rn, \delta n]_{q}$ system.
By Theorem \ref{explicitexpander}, there is an explicit construction of a $(n, t, \lambda)$-graph with $\lambda = 2\sqrt{t-1} + \varepsilon$.
Let us call this graph $\mathcal{G}_{n, t} = (V_{n, t}, E_{n, t})$.
Notice that by our assumptions it is possible to choose $\varepsilon$ small enough and $n$ large enough so that
$$\frac{t-\lambda}{t+\lambda} \geq 1 - \delta + \frac{1}{n}.$$
Therefore, $\mathcal{L}(\mathcal{M},\mathcal{G}_{n, t})$ satisfies the avoidance property.

By Proposition \ref{pro:3points}, if we choose $3$ points on every line of $\mathcal{L}(\mathcal{M},\mathcal{G}_{n, t})$, we get a non-2-cohyperplanar set. In order to get the smallest such set, we choose every vertex and one point (different from the vertices) on every edge of $\mathcal{G}_{n, t}$.
This yields $n + nt/2$ points.
\end{proof}

Let us give one example of an application of Theorem \ref{intersectingexpanderconstruction}.
For simplicity's sake, we will consider only the case when $q$ is a square, since this yields the best possible AG codes as well as the simplest formula for the Singleton defect (which is nice for computations). Consider a family of AG $[n,Rn,\delta n]_q$ codes such that 
$$R+\delta=1-\frac{1}{\sqrt{q}-1}.$$
Note that the size of the non-2-cohyperplanar set that we may obtain is
$$\left(1+ \frac{t}{2}\right)n = \frac{1+ t/2}{R} \cdot k,$$
where $k$ is the dimension of the AG code. According to Theorem~\ref{intersectingexpanderconstruction}, we need $t$ verifying
$$\frac{t-2\sqrt{t-1}}{t+2\sqrt{t-1}} > 1 - \delta = R + \frac{1}{\sqrt{q} - 1}.$$
Setting
$$R(q, t) = \frac{t-2\sqrt{t-1}}{t+2\sqrt{t-1}} - \frac{1}{\sqrt{q} - 1}$$
and
$$\alpha(q, t) = \frac{1 + t/2}{R(q, t)},$$
we want to minimize the value of $\alpha(q,t)$. Notice that since $t$ is the degree of a vertex, $t$ has to be an integer, which rather limits the possibilities for optimization for a given $q$.

When $q \rightarrow \infty$, the second term in the expression of $R(q, t)$ vanishes. This yields an expression of $\alpha(q, t)$ which does not depend on $q$, for which it is easy to check that the minimum value is reached for $t=10$.
Hence we get explicit constructions of non-2-cohyperplanar sets with
$$R(q,10) = \frac{1}{4} - \frac{1}{\sqrt{q} - 1}$$
and
$$\alpha(q,10) = \frac{6}{R(q,10)}\to 24,$$
as $q \rightarrow \infty$.

By computing the value of $\alpha(q, t)$ for integer values of $t$, it is possible to verify that, for $q\geq 89^{2}$, $t=10$ gives the minimum value for $\alpha(q,t)$. For smaller values of $q$, the best values of $t$ and of $\alpha(q,t)$ are reported in Table \ref{table_expander}. 
For $q=4$ the Singleton defect is $1$ so $R > 0$ is impossible, meaning that our construction does not work.

\begin{table}[ht!]
\caption{Smallest values of $\alpha(q,t)$ for small square $q$}
\label{table_expander}
\begin{tabular}{c|c|c}
$q$                                 &  $t$ & $\alpha(q,t)$ \\ \hline
$3^{2}$&86&299.5378\\
$4^{2}$&39&110.0490\\
$5^{2}$&27&71.8927\\
$7^{2}$&20&48.6300\\
$8^2$&18&43.7121\\
$9^2$&17&40.4255\\
$11^2$&15&36.2747\\
$13^2$&14&33.7937\\
$16^2$&13&31.5103\\
$17^2\leq q\leq 19^2$&13&$\sim 30$\\
$23^2\leq q\leq 27^2$&12&$\sim 28$\\
$29^2$&12&27.7441\\
$31^2\leq q\leq 32^2$&11&$\sim 27$\\
$37^2\leq q\leq 49^2$&11&$\sim 26$\\
$53^2\leq q\leq 83^2$&11&$\sim 25$\\
\end{tabular}
\end{table}

\medskip

\section{On the 2-wise weighted Davenport constants}\label{sec:additive}

In this section we investigate links between intersecting codes and zero-sum problems over finite abelian groups, in particlar with generalizations of the Davenport constant. Zero-sum problems over finite abelian groups have been studied since the 1960s, and the Davenport constant and its generalizations remain a main subject in this area (see \cite{GAO2006337} for a general survey on the topic). The coding theoretical approach to problems about zero-sum subsequences in finite abelian groups is not new (see for example \cite{MacWilliams1977TheTO,cohen1999subset}) and an investigation of the Davenport constant with these methods has been already done in \cite{SP,multiwise}. However, our framework is more general: we consider intersecting codes over any finite field and their relation with weighted Davenport constants.

\subsection{The general setting}

Let $G$ be a finite abelian group.

\begin{definition}
Let $a_{1}, \dots, a_{n} \in G$ be a finite sequence of elements of $G$. For such a sequence, we define a \emph{zero-sum subsequence} as a sequence $a_{i_{1}}, \dots, a_{i_{r}}$, with $\{i_1,\ldots,i_r\}\subseteq \{1,\ldots,n\}$, verifying $\sum_{k = 1}^{r} a_{i_{k}} = 0$.  
\end{definition}

If a sequence is long enough, then necessarily it admits a zero-sum subsequence. Therefore, it makes sense to ask from which threshold this occurs for all sequences.

\begin{definition}
The \emph{Davenport constant} of $G$, noted $\D(G)$, is the smallest integer $\ell$ such that every sequence of $\ell$ elements of $G$ has a zero-sum subsequence.\\
The quantity $\dd(G)$ is the largest integer $\ell$ such that there is a sequence of length $\ell$ with no zero-sum subsequences.
\end{definition}

\begin{remark}
The quantity $\dd(G)$ is sometimes also referred to as the small Davenport constant in the literature. This will also be the case in the present article.
One has
$$\D(G) = \dd(G) +1.$$
\end{remark}

\begin{remark}
Another definition of the Davenport constant (which is equivalent in this setting) goes as follows: consider only zero-sum sequences of $G$, that is sequences $a_{1}, \dots, a_{n} \in G$ such that $\sum_{i = 1}^{n} a_{i} = 0_{G}$.
Then $\D(G)$ is the length of the longest such sequence that does not split into $2$ disjoint non-trivial zero-sum subsequences. Indeed, considering a zero-sum sequence of length $n > \D(G)$, it is possible to take the first $\D(G)$ terms, among which there will be a zero-sum subsequence by definition. Its complement must be a zero-sum subsequence as well. Therefore the whole zero-sum sequence splits into $2$ disjoint zero-sum subsequences. Conversely, there are sequences of length $\dd(G)$ with no zero-sum subsequence, meaning that by adding one last element in order to form a zero-sum sequence of length exactly $\D(G) = \dd(G) + 1$ we get a sequence which clearly does not have $2$ disjoint zero-sum subsequences.
\end{remark}

\begin{example}
Let $C_{n}$ be the (additive) cyclic group with $n$ elements and let $1$ denote a generating element.
The sequence $a_{1} = 1, \dots, a_{n-1} = 1$ is a sequence of length $n-1$ with no zero-sum subsequence.
Therefore $\dd(C_{n}) \geq n-1$, which implies $\D(C_{n}) \geq n$. It is well-known that $\D(G) \leq |G|.$ 
Hence, $\D(C_{n}) = n$.
\end{example}

The Davenport constant has been studied intensively, and in fact it has been generalized in a number of ways, two of which we present and use here.

\medskip

Let us define first the multiwise Davenport constants, a generalization introduced by Halter-Koch in \cite{HalterKoch1992AGO}.

\begin{definition}
Let $a_{1}, \dots, a_{n} \in G$ be a finite sequence of elements of $G$ and $j$ be a positive integer. We say that $j$ zero-sum subsequences are \emph{disjoint} if their indices belongs to $j$  disjoint subsets of $\{1, \dots, n\}$.

The $j$-\emph{wise Davenport constant} $\D_{j}(G)$ is the smallest integer $\ell$ such that every sequence of $\ell$ elements of $G$ has $j$ disjoint zero-sum subsequences.
\end{definition}

The usual Davenport constant is $\D_{1}(G) = \D(G)$.
Clearly, one has
\begin{equation}\label{davenportineq}
\D(G) \leq \D_{j}(G) \leq j\D(G).
\end{equation}

The second generalization arises when considering weighted zero-sum subsequences. There are various natural ways to introduce weights in these types of problems. The one that we recall below  received considerable attention in the last two decades since the work of Adhikari et al. \cite{Adhikari2006_1,Adhikari2006_2}. 

\begin{definition}
Let $a_{1}, \dots, a_{n} \in G$ be a finite sequence of elements of $G$ and let $$\emptyset \neq W \subseteq \{0, 1, \dots, \exp(G)-1\}.$$ 
A \emph{$W$-weighted zero-sum subsequence} is a sequence $a_{i_{1}}, \dots, a_{i_{r}}$, with $\{i_1,\ldots,i_r\} \subseteq \{1,\ldots,n\}$, verifying 
$$\sum_{i = 1}^{r} \varepsilon(i)a_{j_i}=0$$
for some $\varepsilon:\N\to W$.
In case $W = \{1, \dots, \exp(G)-1\}$ the sequence is called a \emph{fully-weighted zero-sum subsequence}. 

The \emph{$W$-weighted Davenport constant} $\D^W(G)$ is the smallest integer $\ell$ such that every sequence of $\ell$ elements of $G$ has a $W$-weighted zero-sum subsequence.

The \emph{fully-weighted Davenport constant} $\D^f(G)$ is the smallest integer $\ell$ such that every sequence of $\ell$ elements of $G$ has a fully-weighted zero-sum subsequence.
\end{definition}
Instead of considering subsets of $\{0, 1, \dots, \exp(G)-1\}$ one could also consider subsets of the integers, yet this is essentially equivalent. 

It is also possible to examine multiwise weighted Davenport constants.

\begin{definition}
The $j$-\emph{wise $W$-weighted Davenport constant} $\D_{j}^{W}(G)$ is the smallest integer $\ell$ such that every sequence of length $\ell$ of $G$ has $j$ disjoint $W$-weighted zero-sum subsequences. The $j$-\emph{wise fully-weighted Davenport constant} $\D_{j}^{f}(G)$ is $\D_{j}^{W}(G)$ with $W=\{1, \dots, \exp(G)-1\}$.    
\end{definition}

When $j=2$, there is a relation between this last constant and intersecting codes over prime fields, as remarked  in \cite{SP,multiwise}. We will explain this link in a more general scenario (see Theorem~\ref{thm:link}), including intersecting codes over any finite field.

\medskip

\subsection{Our generalization}

We are now ready to properly define our generalization of the fully-weighted Davenport constant. Let us first recall the definition of $\mathcal{W}$-weighted Davenport constant where the set of weights $\mathcal{W}$ is defined as a non-empty subset of endomorphisms of $G$. This was first introduced in \cite{zeng2011weighted}. The interested reader may also refer to \cite{grynkiewicz2013structural}.

\begin{definition}
Let $G$ be a finite abelian group and let $\mathcal{W}$ be a non-empty set of group endomorphisms of $G$, which we call a  \emph{set of weights} for $G$. Let $a_{1}, \dots, a_{n} \in G$ be a finite sequence of elements of $G$ and $\mathcal{W}$ a set of weights for $G$. A $\mathcal{W}$\emph{-weighted zero-sum subsequence} is a sequence $a_{i_{1}}, \dots, a_{i_{r}}$, with $\{i_1,\ldots,i_r\}\subseteq \{1,\ldots,n\}$, verifying 
$$\sum_{i = 1}^{r} \varepsilon_{i}(a_{j_i})=0$$
for some $\varepsilon_{i}\in \mathcal{W}$.

Let $j$ be a positive integer. The \emph{$j$-wise $\mathcal{W}$-weighted Davenport constant} $\D_j^\mathcal{W}(G)$ is the smallest integer $\ell$ such that any sequence of length $\ell$ of $G$ has $j$ disjoint $\mathcal{W}$-weighted zero-sum subsequences.
 Similarly, the \emph{$j$-wise $\mathcal{W}$-weighted small Davenport constant} $\dd_{j}^{\mathcal{W}}(G)$ is the largest integer $\ell$ such that there exists a sequence of length $\ell$ of $G$ that does not have  $j$ disjoint $\mathcal{W}$-weighted zero-sum subsequences.
\end{definition}

When $G$ is an elementary $p$-group, that is, when
$$G=E_{p^{hr}}=\underbrace{C_p\oplus \ldots\oplus C_p}_{hr\text{ times}},$$
the elementary abelian group of order $p^{hr}$ (here $p$ is a prime and $h$ and $r$ are positive integers), it is possible to consider a group isomorphism $E_{p^{hr}}\cong \mathbb{F}_q^r$, where $q=p^h$. Clearly, this concerns only the additive part. Below we use the multiplicative structure on $\mathbb{F}_q^r$ to introduce a set of weights that can be seen as a generalization of fully-weighted for elementary abelian groups.

\begin{definition}
For $G=E_{p^{hr}}$ an elementary abelian group of order $p^{hr}$, consider a group isomorphism $\varphi : E_{p^{hr}} \to \Fq^{r}$. Define
$$\mathcal{Q}_{h} = \{m_x:y\mapsto \varphi^{-1}(x\varphi(y)) \in {\rm End}(E_{p^{hr}}) \mid x \in \Fq \}$$
the set of weights induced by the scalar multiplication of $\F_q=\F_{p^h}$.
\end{definition}

While the sets of weights $\mathcal{Q}_{h}$ depend in principle on our choice of isomorphism $\varphi$, it is easy to see that the value of the associated Davenport constants does not depend on this choice. This is why we do not include $\varphi$ in the notation of $\mathcal{Q}_{h}$.

Below, we study the $j$-wise $\mathcal{Q}_h$-weighted Davenport constant $\D_j^{\mathcal{Q}_{h}}(E_{p^{hr}})$. In order to simplify the notation, we will denote it by $\D_j^h(E_{p^{hr}})$.

\begin{remark}\label{linear algebra}
For an elementary abelian group $E_{p^{hr}}$ of cardinality $p^{hr}$, let us underline that $\D_j^h(E_{p^{hr}})=\D_j^f(E_{p^{hr}})$ if $h=1$ and that $\D_j^h(E_{p^{hr}})=\D_j(E_{p^{hr}})$ if $p=2$ and $h=1$.
Furthermore, when $j=1$, note that $D_{j}^{h}(E_{p^{hr}}) = r+1$ for elementary reasons of linear algebra over $\Fq$.
\end{remark}

The following theorem establishes the main link between these objects and the theory of intersecting codes.

\begin{theorem}\label{thm:link}
Let $E_{p^{hr}}$ be an elementary abelian group of order $p^{hr}$, where $p$ is a prime and $h,r$ are positive integers. Then $\D_{2}^{h}(E_{p^{hr}})$ is the smallest integer $n$ such that all $[n, n-r]_{p^h}$ codes are not intersecting. Therefore
$$\D_{2}^{h}(E_{p^{hr}}) = \min \{m \geq r+1 \mid m < i(m-r, p^h) \}.$$
\end{theorem}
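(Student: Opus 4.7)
The plan is to translate each $\mathcal{Q}_h$-weighted zero-sum problem for a sequence of length $n$ in $E_{p^{hr}} \cong \F_q^r$ (with $q = p^h$) into a question about codewords with disjoint supports in a linear code over $\F_q$. Given a sequence $(a_1, \dots, a_n) \in \F_q^r$, let $A \in \F_q^{r \times n}$ be the matrix whose $i$-th column is $a_i$, and set $\C_A = \{v \in \F_q^n : A v^T = 0\}$, so that $\dim \C_A = n - \rk(A)$, and $\C_A$ is an $[n, n-r]_{p^h}$ code precisely when the sequence spans $\F_q^r$. The key observation is that a non-trivial $\mathcal{Q}_h$-weighted zero-sum subsequence with index set $I$ and nonzero scalar weights $(x_i)_{i \in I}$ corresponds exactly to a nonzero codeword $c \in \C_A$ with $\supp(c) = I$ and $c_i = x_i$ for $i \in I$; conversely, every nonzero codeword of $\C_A$ arises this way. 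Consequently, the sequence admits two disjoint weighted zero-sum subsequences if and only if $\C_A$ contains two nonzero codewords with disjoint supports, that is, if and only if $\C_A$ is not intersecting.

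Next I would prove the two directions of the equivalence $\D_2^h(E_{p^{hr}}) \leq n$ if and only if every $[n, n-r]_{p^h}$ code fails to be intersecting. The forward direction is immediate: for any such code $\C$, every parity-check matrix has full row rank $r$, so its columns form a sequence of length $n$ in $\F_q^r$, and the hypothesis $\D_2^h(E_{p^{hr}}) \leq n$ together with the correspondence produces two nonzero codewords of $\C$ with disjoint supports. The converse requires handling an arbitrary sequence of length $n$, not only those of rank $r$. If $\rk(A) = r$ the argument is direct; the delicate case is $\rk(A) = r' < r$, where $\dim \C_A = n - r' > n - r$. Then $\C_A$ admits a linear subcode $\C' \subseteq \C_A$ of dimension exactly $n - r$; as $\C'$ is an $[n, n-r]_{p^h}$ code, the hypothesis implies $\C'$ is not intersecting, and any two nonzero codewords of $\C'$ with disjoint supports are a fortiori nonzero codewords of $\C_A$ with disjoint supports. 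The correspondence then yields two disjoint weighted zero-sum subsequences of the original sequence, proving $\D_2^h(E_{p^{hr}}) \leq n$.

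Finally I would rephrase the resulting characterization using $i(k, q)$. The existence of an intersecting $[n, k]_q$ code is monotone in the length: appending any column to a generator matrix produces a generator matrix of an intersecting $[n+1, k]_q$ code, since dimension and the intersection property are both preserved (supports can only grow). Hence ``no intersecting $[n, n-r]_{p^h}$ code exists'' is equivalent to $n < i(n-r, p^h)$, and combining with the previous step gives $\D_2^h(E_{p^{hr}}) = \min\{m \geq r+1 : m < i(m-r, p^h)\}$. The main obstacle in the plan is the low-rank case in the converse direction: sequences whose span is a proper subspace of $\F_q^r$ produce codes of dimension greater than $n-r$ and so are not directly controlled by a hypothesis phrased only for $[n, n-r]_{p^h}$ codes; the clean resolution is that intersecting codes are closed under passage to linear subcodes, which lets one drop down to an $[n, n-r]_{p^h}$ subcode and invoke the hypothesis there.
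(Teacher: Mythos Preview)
Your proof is correct and follows the same core idea as the paper: the parity-check correspondence between sequences in $\F_q^r$ and linear codes, under which weighted zero-sum subsequences become codewords and ``two disjoint zero-sums'' becomes ``not intersecting.''

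The one place where your route diverges slightly is the low-rank case. The paper works only with the extremal sequence of length $n=\D_2^h(E_{p^{hr}})-1$ and argues directly that this particular sequence must span $\F_q^r$: if not, one could append an element $b$ outside the span and obtain a sequence of length $\D_2^h(E_{p^{hr}})$ still without two disjoint weighted zero-sum subsequences, contradicting the definition. You instead prove the full equivalence ``$\D_2^h\le n$ iff no $[n,n-r]_{p^h}$ code is intersecting'' for every $n$, and handle an arbitrary low-rank sequence by passing to a subcode of $\C_A$ of dimension exactly $n-r$ and using that intersecting is inherited by subcodes. Both arguments are clean; yours is a bit more self-contained (it never singles out the extremal length), while the paper's extension trick is marginally shorter. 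Your explicit monotonicity step for the reformulation via $i(k,q)$ is also more careful than the paper, which leaves that passage implicit.
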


\begin{proof}
Let $n=\D_2^h(E_{p^{hr}})-1$. By definition, there exists a sequence $a_{1}, \dots, a_{n} \in E_{p^{hr}}$ that does not admit two disjoint weighted zero-sum subsequences. Note that $n$ must be greater than $r$ by the above remark \ref{linear algebra}.
Via the isomorphism $E_{p^{hr}} \cong \F_{p^h}^{r}$, every $a_{i}$ can be seen as a (column) vector. 
Let $H$ be the matrix defined as
$$H = 
\left[
\begin{array}{c|c|c}
\, & \, & \, \\
a_{1} & \cdots & a_{n} \\
\, & \, & \,
\end{array}
\right].$$
The matrix $H$ is full-rank, because otherwise there would be a sequence of length $\D_2^h(E_{p^{hr}})$ that does not admit two disjoint weighted zero-sum subsequences, contradicting the definition: simply consider $b \notin \langle a_{1}, \dots, a_{n}\rangle$ and the prolonged sequence $a_{1}, \dots, a_{n}, b$.

Let $\C$ be the $[n,n-r]_{p^h}$ code defined by the parity-check matrix $H$. A codeword of $\C$ corresponds to a $\mathcal{W}$-weighted zero-sum subsequence of $a_{1}, \dots, a_{n}$. By the above assumption, $\C$ is then an intersecting code. Therefore 
$$n \geq i(n-r, p^h).$$
Hence 
$$\D_{2}^{h}(E_{p^{hr}}) = \max \{n>r \mid n \geq i(n-r, p^h) \}+1.$$
or, equivalently, the thesis.
\end{proof}

\begin{example}\label{exa:16}
Consider the elementary abelian group $E_{16}$ of order $16$. Let $h=1$ and $r=4$. The set $\{m \geq 5 \mid m < i(m-4, 2) \}=\{8,9,\ldots\}$ (see Table \ref{table2}), so that $\D_2(E_{16})=8$. On the other hand, if $h=2$ and $r=2$, the set 
$\{m \geq 3 \mid m < i(m-2, 4) \}=\{6,7,\ldots\}$  (see again Table \ref{table2}), so that $\D_2^2(E_{16})=6$.
\end{example}

\begin{example}\label{exa:1024}
Consider the elementary abelian group $E_{1024}$ of order $1024$. The set $\{m \geq 11 \mid m < i(m-10, 2) \}=\{17,18,\ldots\}$ (see Table \ref{table2}), so that $\D_2(E_{1024})=17$. On the other hand, if $h=2$ and $r=5$, the set $\{m \geq 6 \mid m < i(m-5, 4) \}=\{11,12,\ldots\}$ (see again Table \ref{table2}), so that $\D^2_2(E_{1024})=11$.
\end{example}

\begin{remark}
Below we record the values of $(r,\D_2(E_{2^r}))$ deduced from Table \ref{table2}:
$(1, 4)$, $(2, 5)$, $(3, 7)$, $(4, 8)$, $(5, 10)$, $(6, 11)$, $(7, 12)$, $(8,14)$, $(9,16)$, $(10,17)$, $(11,18)$, $(12,19)$, $(13,21)$, $(14,22)$, $(15,23)$, $(16,25)$, $(17,27)$. Any further improvement on the knowledge of $i(k,2)$ would allow to extend this list.
\end{remark}

\subsection{Asymptotic bounds}

Let us fix a prime $p$ and a positive integer $h$. Let us denote $q=p^h$.  
We investigate the asymptotic behavior of $\D_{2}^{h}(E_{p^{hr}})$ when $r$ grows.

\begin{lemma}\label{equivalence}
Let $\alpha \leq \liminf_{k \rightarrow \infty} i(k, p^h)/k$, and $\beta \geq \limsup_{k \rightarrow \infty} i(k, p^h)/k$.
Then
$$\limsup_{r \rightarrow \infty} \frac{\D_{2}^{h}(E_{p^{hr}})}{r} \leq \frac{\alpha}{\alpha - 1}$$
and
$$\liminf_{r \rightarrow \infty} \frac{\D_{2}^{h}(E_{p^{hr}})}{r} \geq \frac{\beta}{\beta - 1}.$$
\end{lemma}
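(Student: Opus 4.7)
The plan is to reduce the lemma to an asymptotic statement about $i(k, p^h)$ via Theorem \ref{thm:link}. Substituting $k = m - r$ in the formula $\D_2^h(E_{p^{hr}}) = \min\{m \geq r+1 \mid m < i(m-r, p^h)\}$ rewrites it as $\D_2^h(E_{p^{hr}}) = r + k^*(r)$, where
$$k^*(r) := \min\{k \geq 1 \mid i(k, p^h) > k + r\}.$$
Since $i(k, p^h) \geq 2k-1$ by Theorem \ref{thm:bound}, the quantity $i(k, p^h) - k$ is unbounded in $k$, so $k^*(r)$ is well-defined for every $r$ and moreover $k^*(r) \to \infty$ as $r \to \infty$. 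Writing $\D_2^h(E_{p^{hr}})/r = 1 + k^*(r)/r$, the lemma reduces to establishing $\limsup_{r\to\infty} k^*(r)/r \leq 1/(\alpha-1)$ and $\liminf_{r\to\infty} k^*(r)/r \geq 1/(\beta-1)$.

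For the upper bound, fix $\varepsilon > 0$. The hypothesis $\alpha \leq \liminf_{k\to\infty} i(k, p^h)/k$ yields a $K_0$ with $i(k, p^h) \geq (\alpha - \varepsilon) k$ for all $k \geq K_0$. Setting $k_0 = \lfloor r/(\alpha - 1 - 2\varepsilon) \rfloor + 1$, one checks that for $r$ sufficiently large $k_0 \geq K_0$ and $(\alpha - 1 - \varepsilon) k_0 > r$, which gives $i(k_0, p^h) \geq (\alpha - \varepsilon) k_0 > k_0 + r$, so $k^*(r) \leq k_0$. Taking $\limsup$ in $r$ and then letting $\varepsilon \to 0$ delivers the claim. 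The lower bound is dual: from $\limsup_{k\to\infty} i(k, p^h)/k \leq \beta$ one gets $i(k, p^h) \leq (\beta + \varepsilon) k$ for $k \geq K_0$; since $k^*(r) \geq K_0$ eventually, combining this upper bound with the defining strict inequality $i(k^*(r), p^h) > k^*(r) + r$ forces $k^*(r) > r/(\beta + \varepsilon - 1)$, and again $\varepsilon \to 0$ finishes the job.

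The one delicate point is reconciling the \emph{strict} inequality in the definition of $k^*(r)$ with the non-strict asymptotic estimates provided by $\liminf$ and $\limsup$. This is resolved by the standard device of absorbing a small slack: replacing $\alpha - 1$ by $\alpha - 1 - 2\varepsilon$ (respectively $\beta - 1$ by $\beta + \varepsilon - 1$) creates the strict margin required, and this slack vanishes in the final step when $\varepsilon \to 0$. Beyond this bookkeeping, the argument is an elementary inversion of the asymptotic rate of $i(k, p^h)$ into that of its ``inverse'' function $k^*(r)$, so no deeper obstacle arises.
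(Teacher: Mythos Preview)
Your proof is correct and follows essentially the same approach as the paper: both arguments invoke Theorem~\ref{thm:link} and convert the asymptotic hypotheses on $i(k,p^h)/k$ into bounds on $\D_2^h(E_{p^{hr}})/r$ via an $\varepsilon$-of-room manoeuvre. Your substitution $k=m-r$ and introduction of $k^*(r)$ is a clean repackaging; the paper instead works directly with $\D_2^h(E_{p^{hr}})$, using that it lies in the defining set while $\D_2^h(E_{p^{hr}})-1$ does not, but the two computations are equivalent.
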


\begin{proof}
Let $\varepsilon > 0$ and let $r$ be large enough so that, for all $m \geq r+1$, one has both 
$$i(m-r, p^h) \leq (\beta+\varepsilon)\cdot(m-r) \text{ and } i(m-r, p^h) \geq (\alpha - \varepsilon) \cdot (m-r).$$
Recall that, by Theorem \ref{thm:link}, 
$$\D_{2}^{h}(E_{p^{hr}}) = \min \{m \geq r+1 \mid m < i(m-r, q) \}.$$
Hence $\D_{2}^{h}(E_{p^{hr}}) < i(\D_{2}^{h}(E_{p^{hr}})-r, q) \leq (\beta + \varepsilon) \cdot (\D_{2}^{h}(E_{p^{hr}})-r)$, from which we obtain
$$\D_{2}^{h}(E_{p^{hr}}) \geq r \cdot \frac{\beta+\varepsilon}{\beta -1+\varepsilon}.$$
The other bound follows similarly by considering that $$\D_{2}^{h}(E_{p^{hr}})-1 \geq i(\D_{2}^{h}(E_{p^{hr}})-1-r, q)\geq (\alpha-\varepsilon)\cdot(\D_2^h(E_{p^{hr}})-1-r),$$
from which we obtain
$$\D_{2}^{h}(E_{p^{hr}})\leq 1+r\cdot \frac{\alpha-\varepsilon}{\alpha-1-\varepsilon}.$$
\end{proof}

Using the above lemma we can transform our asymptotic upper and lower bounds from the previous sections into respectively lower and upper bounds for the asymptotic value of $\D_{2}^{h}(E_{p^{hr}})$, as well as give the length of constructions of long sequences with no zero-sum subsequence.

\begin{theorem}\label{dupper}
For all prime $p$ and positive integer $h$, one has
$$\limsup_{r \rightarrow \infty} \frac{\D_{2}^{h}(E_{p^{hr}})}{r} \leq 2-\frac{1}{p^h}.$$
Moreover, for $p^h\leq 17$, this bound is improved in Table \ref{table4}.
\end{theorem}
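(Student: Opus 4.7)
The plan is to apply Lemma \ref{equivalence} directly, using as $\alpha$ the best available asymptotic lower bound for $i(k,p^h)/k$. Since the Lemma gives
$$\limsup_{r \rightarrow \infty} \frac{\D_{2}^{h}(E_{p^{hr}})}{r} \leq \frac{\alpha}{\alpha - 1}$$
and the function $x \mapsto x/(x-1)$ is decreasing on $(1,\infty)$, every improved lower bound on $\liminf_{k \to \infty} i(k, p^h)/k$ translates into an improved upper bound on the limsup of $\D_{2}^{h}(E_{p^{hr}})/r$.

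For the general bound, I would invoke Theorem \ref{thm:lowerasymp}, which gives
$$\liminf_{k \to \infty} \frac{i(k, p^h)}{k} \;\geq\; 2 + \frac{1}{p^h - 1}.$$
Setting $\alpha = 2 + \frac{1}{p^h - 1}$ in Lemma \ref{equivalence}, a short computation yields
$$\frac{\alpha}{\alpha - 1} \;=\; \frac{2 + \frac{1}{p^h-1}}{1 + \frac{1}{p^h-1}} \;=\; \frac{2(p^h-1) + 1}{p^h} \;=\; 2 - \frac{1}{p^h},$$
which is exactly the claimed inequality.

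For the sharper bounds stated in the (forthcoming) Table \ref{table4}, I would instead use the values of $\liminf_{k \to \infty} i(k, q)/k$ listed in Table \ref{table1}, which come from combining the asymptotic constraint $R \leq \delta$ for intersecting codes with the MRRW bound. For each $q = p^h \leq 17$ I would take $\alpha$ to be the corresponding entry of Table \ref{table1}, plug it into $\alpha/(\alpha-1)$, and record the result in Table \ref{table4}.

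There is essentially no obstacle here: the content of the theorem is already packaged in Lemma \ref{equivalence} and Theorem \ref{thm:lowerasymp}, so the proof reduces to the algebraic simplification of $\alpha/(\alpha-1)$ for $\alpha = 2 + 1/(p^h-1)$, together with a routine numerical computation for the entries of Table \ref{table4}. The only thing worth double-checking is that the values of $\alpha$ used from Table \ref{table1} are indeed valid asymptotic lower bounds (not just heuristics), which they are, since they follow from genuine $q$-ary upper-bounding functions applied to the region $R \leq \delta$.
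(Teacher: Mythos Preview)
Your proposal is correct and follows exactly the same approach as the paper's proof: apply Lemma~\ref{equivalence} with $\alpha = 2 + \frac{1}{p^h-1}$ from Theorem~\ref{thm:lowerasymp} for the general bound, and with the values from Table~\ref{table1} for the improved entries. Your explicit computation of $\alpha/(\alpha-1) = 2 - 1/p^h$ is the only added detail, and it is correct.
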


\begin{center}
\begin{table}[ht!]
\caption{Upper bound on the asymptotic $2$-wise weighted Davenport constant}
\label{table4}
\begin{tabular}{c|c|c}
$p$ & $h$ & $\limsup_{r \rightarrow \infty} \frac{\D_{2}^{h}(E_{p^{hr}})}{r}$ \\ \hline
2 & 1   & 1.3956                                        
    \\
2 & 2  & 1.6364 
\\
2 &  3 & 1.8057                                            \\
2 & 4  & 1.9257                                            \\
3 & 1   & 1.5472                                           \\
3 & 2   & 1.8291                                            \\
5 & 1   & 1.6972                                            \\
7 & 1   & 1.7774                                           \\
11 & 1  & 1.8660                                            \\
13 & 1  & 1.8940                                            \\

17 & 1  & 1.9343                                            \\
\end{tabular}
\end{table}
\end{center}

\begin{proof}
Simply apply Lemma \ref{equivalence} using $\alpha=2+\frac{1}{p^h-1}$ from Theorem \ref{thm:lowerasymp}. As observed, $\alpha$ may be improved by looking at the Table \ref{table1} and applying Lemma \ref{equivalence}.
\end{proof}

\begin{theorem}\label{dlower1}
For every prime $p$ and every positive integer $h$, the following holds:
\begin{itemize}
\item if $h=1$ or $p^h \in \{4, 8, 9, 16, 25, 27, 32, 125\}$,
$$\liminf_{r \rightarrow \infty} \frac{\D_{2}^{h}(E_{p^{hr}})}{r} \geq \frac{2}{\log_{p^h}(2p^h-1)};$$
\item if $h=2m$ is even and $p^h \notin \{4, 9, 16, 25\}$, then
$$\liminf_{r \rightarrow \infty} \frac{\D_{2}^{h}(E_{p^{hr}})}{r} \geq 2 - \frac{2}{p^{m}};$$
\item if $h = 2m+1$ is odd and $p^h \notin \{8, 27, 32, 125\}$, then
$$\liminf_{r \rightarrow \infty} \frac{\D_{2}^{h}(E_{p^{hr}})}{r} \geq 2 - \frac{2}{2 \frac{(p^{m} - 1) (p^{m+1}-1)}{(p^{m+1} + p^{m} -2)} + 1}.$$
\end{itemize}
\end{theorem}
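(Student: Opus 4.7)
The plan is to derive each of the three bounds by applying Lemma \ref{equivalence} with the best available upper bound $\beta \geq \limsup_{k\to\infty} i(k,p^h)/k$ coming from either Theorem \ref{prob} or Theorem \ref{agthm}. Since the map $\beta\mapsto\beta/(\beta-1)$ is strictly decreasing on $(1,\infty)$, a smaller admissible $\beta$ yields a larger lower bound on $\liminf \D_2^h(E_{p^{hr}})/r$, so in each region we must pick the tightest available upper bound. The trichotomy in the statement is designed precisely to match the remark after Theorem \ref{agthm}, which records exactly when the probabilistic bound beats the AG one and vice versa.

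For the first item I would plug the probabilistic bound from Theorem \ref{prob} into Lemma \ref{equivalence}. Setting $\beta = 2/\log_{p^h}(p^{2h}/(2p^h-1))$ and $L = 2 - \log_{p^h}(2p^h-1)$, one has $\beta = 2/L$, so
\[
\frac{\beta}{\beta-1} = \frac{2/L}{2/L-1} = \frac{2}{2-L} = \frac{2}{\log_{p^h}(2p^h-1)},
\]
which is the announced bound. The exceptional set $\{4,8,9,16,25,27,32,125\}$, together with the case $h=1$, is exactly where this bound beats any AG-based one: indeed, for $h=1$ the only applicable bullet of Theorem \ref{agthm} gives $3+3/(q-2)\to 3$ as $q\to\infty$, and for the listed composite values the probabilistic bound is smaller than the AG one.

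For the second item, with $h=2m$ and $p^h\notin\{4,9,16,25\}$ (hence $p^h\geq 49$ a square), the first bullet of Theorem \ref{agthm} gives $\beta = 2 + 2/(p^m-2)$, so $\beta-1 = p^m/(p^m-2)$ and
\[
\frac{\beta}{\beta-1} = \frac{2(p^m-1)}{p^m} = 2 - \frac{2}{p^m}.
\]
For the third item, with $h=2m+1\geq 3$ and $p^h\notin\{8,27,32,125\}$, the second bullet of Theorem \ref{agthm} gives $\beta = 4/\bigl(2-1/(p^m-1)-1/(p^{m+1}-1)\bigr)$. Letting $A = 2(p^m-1)(p^{m+1}-1)/(p^{m+1}+p^m-2)$ and using the identity
\[
\frac{1}{p^m-1} + \frac{1}{p^{m+1}-1} = \frac{p^{m+1}+p^m-2}{(p^m-1)(p^{m+1}-1)} = \frac{2}{A},
\]
one obtains $\beta = 2A/(A-1)$ and therefore $\beta/(\beta-1) = 2A/(A+1) = 2 - 2/(A+1)$, exactly as claimed.

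The argument carries no genuine obstacle beyond bookkeeping. The only point that deserves care, and which is the main source of potential slip-ups, is checking that the three case hypotheses partition all pairs $(p,h)$ and that within each region the chosen bound is the best known. This is immediate from the remark after Theorem \ref{agthm} once one observes that $\{4,9,16,25\}$ are exactly the prime-power squares less than $49$, and $\{8,27,32,125\}$ are exactly the odd prime-powers of exponent at least $3$ less than $128$.
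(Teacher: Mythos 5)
Your proposal is correct and takes essentially the same route as the paper: the paper's proof is the one-liner ``apply Lemma \ref{equivalence} using $\beta$ from Theorem \ref{prob} and Theorem \ref{agthm},'' and you supply precisely the algebraic simplifications needed to go from those $\beta$'s to the three stated bounds, together with the (correct) accounting for which bound is used in which regime.
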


\begin{proof}
Simply apply Lemma \ref{equivalence} using $\beta$ from Theorem \ref{prob} and Theorem \ref{agthm}.
\end{proof}

\begin{remark}
Whenever $h=1$, we obtain the same asymptotic upper and lower bounds as in \cite{multiwise}.    
\end{remark}

\begin{remark}
Notice that Theorem \ref{dupper} and Theorem \ref{dlower1} can be considered to be an improvement on \eqref{davenportineq} for $j=2$.
Indeed, notice that \eqref{davenportineq} also holds for weighted versions of the Davenport constant.
Moreover, since $D_{1}^{h}(E_{p^{hr}}) = r+1$, as stated in Remark \ref{linear algebra}, an asymptotic weighted version of \eqref{davenportineq} is
$$1 \leq \liminf_{r \rightarrow \infty} \frac{\D_{2}^{h}(E_{p^{hr}})}{r} \leq 2.$$

Similar improvements for other values of $j$ have been presented in \cite{multiwise}.
\end{remark}

Note that since Theorem \ref{agthm} provides explicit constructions of short intersecting codes, combined with Lemma \ref{equivalence} it can be used to construct long sequences of elements of $E_{p^{hr}}$ with no $2$ disjoint weighted zero-sum subsequences.
This is stated precisely in the following remark.

\begin{remark} \label{davenportexplicitconstructions}
Let $p$ be a prime and let $h$ and $r$ be positive integers. There exist \emph{explicit} sequences of elements of $E_{p^{hr}}$ with no $2$ disjoint weighted zero-sum subsequences of length $\ell r$ with
\begin{itemize}
\item $\ell = 2 - \frac{2}{p^{m}}$, if $p \geq 5$, $h=2m$ and $p^h\geq 25$;
\item $\ell = 2 - \frac{2}{2 \frac{(p^{m} - 1) (p^{m+1}-1)}{(p^{m+1} + p^{m} -2)} + 1}$, if $h=2m+1$ and $p^h\geq 32$;
\item $\ell = \frac{3p-3}{2p-1}$, if $p\geq 11$ and $h=1$. 
\end{itemize}
The remaining cases are summarized in Table \ref{table5}.

\begin{table}[ht!]
\centering
\caption{Values of $\ell$ in the exceptional cases}
\label{table5}
\begin{tabular}{c|c|c|c}
$p$  & $h$ & $\ell$ & Probabilistic bound \\ \hline
$2$  & $1$ & $1.206$      & $1.261$            \\
$3$  & $1$ & $1.297$      & $1.365$            \\
$2$  & $2$ & $1.315$      & $1.424$            \\
$5$  & $1$ & $1.344$      & $1.464$            \\
$7$  & $1$ & $1.388$      & $1.517$            \\
$2$  & $3$ & $1.4$        & $1.535$            \\
$3$  & $2$ & $1.411$      & $1.551$            \\
$2$  & $4$ & $1.451$      & $1.614$            \\
$3$  & $3$ & $1.471$      & $1.660$           
\end{tabular}
\end{table}

\end{remark}

\bigskip

\section{Applications to factorization theory}\label{sec:factorization}

In the following section we will illustrate the impact of the previous results on factorization in the ring of integers of number fields. It is well-known that problems of factorization in ring of integers of number fields and more generally in Dedekind domains and Krull monoids are related to problems of zero-sum sequences in their class group (see \cite{Geroldinger2006NonUniqueF}). We will use some notions of algebraic number theory and we highlight their connection with the previous part of the paper. For the sake of brevity, we will not recall all the definitions, but we refer the interested reader to \cite{neukirch2013algebraic} for more details.

\medskip

\subsection{The classic scenario} Let $K$ be a number field, and let $\mathcal{O}_{K}$ be its integer ring. It is well-known that $\mathcal{O}_{K}$ is a Dedekind domain. We can define its ideal class group
$$\Cl(\mathcal{O}_{K}) = {\rm Frac}(\mathcal{O}_{K}) / {\rm Prin}(\mathcal{O}_{K})$$
where ${\rm Frac}(\mathcal{O}_{K})$ is the set of fractional ideals of $\mathcal{O}_{K}$ and ${\rm Prin}(\mathcal{O}_{K})$ its set of principal ideals.

For number fields it is well-known that the class group is finite and each class contains a prime ideal. In fact, all of the following assertions remain true for Dedekind domains (or even more generally for Krull monoids) with finite class group where each class contains a prime ideal.

Indeed, the original motivation for studying the Davenport constant stems from its connection to factorizations; see for example \cite{OLSON19698}, which already mentions the link recalled below between the Davenport constant of $\Cl(\mathcal{O}_{K})$ and factorizations in $\mathcal{O}_{K}$. 

\begin{lemma}\label{factorization}
The Davenport constant $\D(\Cl(\mathcal{O}_{K}))$ is the largest number of prime ideals (with multiplicities) occurring in the factorization of the ideal generated by an irreducible element $x \in \mathcal{O}_{K}$. Equivalently, the small Davenport constant $\dd(\Cl(\mathcal{O}_{K}))$ is the largest number of prime ideals such that their product is not divisible by a non-trivial principal ideal.
\end{lemma}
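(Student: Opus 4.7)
The strategy is to establish, via unique factorisation of ideals, a dictionary between factorisations of non-zero non-unit elements $x \in \mathcal{O}_K$ and zero-sum sequences in $\Cl(\mathcal{O}_K)$, and then to translate ``irreducibility'' and ``divisibility by a non-trivial principal ideal'' across this dictionary.

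For the first statement, fix a non-zero non-unit $x \in \mathcal{O}_K$ and factor $(x) = \mathfrak{p}_1 \cdots \mathfrak{p}_n$ (prime ideals repeated according to multiplicity). Since $(x)$ is principal, the multiset $S(x) := ([\mathfrak{p}_1], \ldots, [\mathfrak{p}_n])$ is a zero-sum sequence in $\Cl(\mathcal{O}_K)$ of length $n$. The key claim is that $x$ is irreducible in $\mathcal{O}_K$ if and only if $S(x)$ does not split into two non-trivial zero-sum subsequences. Indeed, any such splitting $\{1,\ldots,n\} = I \sqcup J$ exhibits two sub-products $\prod_{i \in I} \mathfrak{p}_i$ and $\prod_{j \in J} \mathfrak{p}_j$ which are both principal, say equal to $(y)$ and $(z)$ with $y, z$ non-units; then $(x) = (y)(z)$ gives $x = u\,yz$ for some unit $u$, so $x$ is reducible. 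Conversely, a non-trivial factorisation $x = yz$ pulls back, via the prime factorisations of $(y)$ and $(z)$, to such a splitting of $S(x)$. By the alternative characterisation of $\D(G)$ recalled after its definition (the maximal length of a zero-sum sequence which does not split into two non-trivial zero-sum subsequences), the length $n$ of $S(x)$ is at most $\D(\Cl(\mathcal{O}_K))$ whenever $x$ is irreducible. For the reverse inequality, take a non-splitting zero-sum sequence $(g_1,\ldots,g_N)$ of maximal length $N = \D(\Cl(\mathcal{O}_K))$; using the hypothesis that every class of $\Cl(\mathcal{O}_K)$ contains a prime ideal, pick $\mathfrak{p}_i$ with $[\mathfrak{p}_i] = g_i$; then $\mathfrak{p}_1 \cdots \mathfrak{p}_N$ is principal, and any of its generators is irreducible by the dictionary.

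The second statement uses essentially the same dictionary: a sequence $(g_1, \ldots, g_\ell)$ in $\Cl(\mathcal{O}_K)$ has no non-empty zero-sum subsequence if and only if, for any choice of prime representatives $\mathfrak{p}_i \in g_i$, no sub-product $\prod_{i \in I} \mathfrak{p}_i$ with $\emptyset \neq I \subseteq \{1,\ldots,\ell\}$ is principal; equivalently, $\mathfrak{p}_1 \cdots \mathfrak{p}_\ell$ is not divisible by any non-trivial principal ideal (recall that for ideals in a Dedekind domain, divisibility coincides with reverse inclusion). Taking the supremum of $\ell$ on both sides, and again using that every ideal class contains a prime ideal in order to realise arbitrary sequences in $\Cl(\mathcal{O}_K)$ by sequences of prime ideals, yields the claimed equality.

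The main subtlety lies in the bookkeeping of units and in translating carefully between ``element-level'' factorisations $x = yz$ and ``ideal-level'' splittings of the prime factorisation of $(x)$, so that non-triviality is preserved in both directions. The decisive non-trivial input is the hypothesis that every class of $\Cl(\mathcal{O}_K)$ contains a prime ideal: without it, each of the two equalities would degrade to an inequality, whereas this hypothesis promotes the correspondence between maximal (small) Davenport sequences and extremal irreducible, respectively non-principal-divisible, configurations to a genuine realisation result.
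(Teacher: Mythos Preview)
Your proof is correct and follows essentially the same route as the paper's: you set up the same dictionary between prime-ideal factorisations of principal ideals and zero-sum sequences in $\Cl(\mathcal{O}_K)$, invoke the equivalent characterisation of $\D(G)$ via non-splitting zero-sum sequences, and use the fact that every class contains a prime ideal to realise an extremal sequence by an irreducible element. If anything, your write-up is slightly more complete than the paper's, since you spell out the converse direction of the irreducibility claim and treat the second (small Davenport) statement explicitly, whereas the paper's proof focuses almost entirely on the first statement.
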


Since we will expand on this lemma, we consider it useful to provide the reader with a proof.

\begin{proof}
Let $(x) = \mathfrak{p}_{1} \cdot \ldots \cdot \mathfrak{p}_{n}$ be the unique factorization of $(x)$ as a product of prime ideals.
The image of this factorization in $\Cl(\mathcal{O}_{K})$ (considered with additive notation) is an identity of the form
$$0_{\Cl(\mathcal{O}_{K})} = [\mathfrak{p}_{1}] +\cdots + [\mathfrak{p}_{n}]$$
because $(x)$ is a principal ideal.
Note that the above identity means that $[\mathfrak{p}_{1}], \dots, [\mathfrak{p}_{n}]$ is a zero-sum sequence in $\Cl(\mathcal{O}_{K})$.
If this zero-sum subsequence can be decomposed into $2$ disjoint zero-sum subsequences, then $(x)$ is the product of $2$ (non-trivial) principal ideals, meaning that $x$ cannot be irreducible.
Therefore the zero-sum sequence must have length smaller than $\D(\Cl(\mathcal{O}_{K}))$.

Conversely, consider a zero-sum sequence in $\Cl(\mathcal{O}_{K})$ of length $n = \D(\Cl(\mathcal{O}_{K}))$ with no $2$ disjoint zero-sum subsequences (such a sequence must exist from the definition of the Davenport constant).
Such a sequence is of the form
$$0_{\Cl(\mathcal{O}_{K})} = [\mathfrak{a}_{1}] + \dots + [\mathfrak{a}_{n}].$$
Every class in $ \Cl(\mathcal{O}_{K})$ is represented by a prime ideal in $\mathcal{O}_{K}$. Every $[\mathfrak{a}_{n}]$ can therefore be represented by a prime ideal, say $\mathfrak{p}_{i}$.
Let $x \in \mathcal{O}_{K}$ be a generator of the principal ideal $\prod_{i = 1}^{n} \mathfrak{p}_{i}$.
Since there are no $2$ disjoint zero-sum subsequences, $x$ must be irreducible, and its unique factorization into prime ideals has length $\D(\Cl(\mathcal{O}_{K}))$.

Therefore $\D(\Cl(\mathcal{O}_{K}))$ is the largest number of prime ideals contained in the factorization of an irreducible element, as claimed.
\end{proof}

For every ideal $\mathcal{I}$ in $\mathcal{O}_{K}$, the ideal $\mathcal{I}^{\exp(\Cl(\mathcal{O}_{K}))}$ is principal.
In particular $(\mathfrak{p}_{1} \cdot \ldots \cdot \mathfrak{p}_{n})^{\exp(\Cl(\mathcal{O}_{K}))}$ is always principal.

In view of the preceding lemma, a natural question is: considering an ideal $\mathcal{I}$ in $\mathcal{O}_{K}$, which powers of $\mathcal{I}$ are divisible by a non-trivial principal ideal? Consider for example the case
$$\mathcal{I} = \prod_{i=1}^{n} \mathfrak{p}_{i}$$
where the $\mathfrak{p}_{i}$ are prime ideals. If $\mathcal{I}^{k}$ is divisible by a non-trivial principal ideal, then there must be a product
$$\prod_{i = 1}^{n} \mathfrak{p}_{i}^{\alpha_{i}}$$
(with $0 \leq \alpha_{i} \leq k$) which is a non-trivial principal ideal.
This can of course be interpreted as a $\{1, \dots, k\}$-weighted zero-sum subsequence in the ideal class group $\Cl(\mathcal{O}_{K})$.
The fully-weighted Davenport constant is a particular case of this general question, namely for $k = \exp(\Cl(\mathcal{O}_{K})) - 1$.

\begin{lemma}\label{weightedfactorization2}
The following interpretation of the different Davenport constants holds:
\begin{itemize}
\item for every $k \in \N$, the weighted small Davenport constant $\dd^{\{1, \dots, k\}}(\Cl(\mathcal{O}_{K}))$ is the largest number $\ell \in \N$ such that there exist $\ell$ prime ideals $\mathfrak{p}_{1}, \ldots, \mathfrak{p}_{\ell}$ such that the product
$$(\mathfrak{p}_{1} \cdot \ldots \cdot \mathfrak{p}_{\ell})^{k}$$
is not divisible by a non-trivial principal ideal;
\item the fully-weighted small Davenport constant $\dd^{f}(\Cl(\mathcal{O}_{K}))$ is the largest number $\ell \in \N$ such that there exist $\ell$ prime ideals $\mathfrak{p}_{1}, \ldots, \mathfrak{p}_{\ell}$ such that the product
$$(\mathfrak{p}_{1} \cdot \ldots \cdot \mathfrak{p}_{\ell})^{\exp(\Cl(\mathcal{O}_{K})) - 1}$$
is not divisible by a non-trivial principal ideal;
\item the $2$-wise small Davenport constant $\dd_{2}(\Cl(\mathcal{O}_{K}))$ is the largest number $\ell \in \N$ such that there exist $\ell$ prime ideals $\mathfrak{p}_{1}, \ldots, \mathfrak{p}_{\ell}$ such that the product
$$\mathfrak{p}_{1} \cdot \ldots \cdot \mathfrak{p}_{\ell}$$
is not divisible by a product of two non-trivial principal ideals (or equivalently, this product is divisible only by principal ideals generated by an irreducible element);
\item the $2$-wise fully-weighted small Davenport constant $\dd_{2}^{f}(\Cl(\mathcal{O}_{K}))$ is the largest number $\ell \in \N$ such that there exist $\ell$ prime ideals $\mathfrak{p}_{1}, \ldots, \mathfrak{p}_{\ell}$ such that the product
$$(\mathfrak{p}_{1} \cdot \ldots \cdot \mathfrak{p}_{\ell})^{\exp(\Cl(\mathcal{O}_{K})) - 1}$$
is not divisible by a product of two non-trivial principal ideals (or equivalently, this product is divisible only by principal ideals generated by an irreducible element).
\end{itemize}
\end{lemma}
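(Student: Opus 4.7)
The plan is to mirror the proof of Lemma \ref{factorization}, adapting the translation between zero-sum conditions in $\Cl(\mathcal{O}_K)$ and divisibility conditions on products of prime ideals. The central tool is the correspondence $\mathfrak{p} \leftrightarrow [\mathfrak{p}]$: every sequence of prime ideals $\mathfrak{p}_1,\ldots,\mathfrak{p}_\ell$ yields a sequence of classes $[\mathfrak{p}_1],\ldots,[\mathfrak{p}_\ell]$, and conversely, since every class contains a prime ideal (with repetitions allowed among the $\mathfrak{p}_i$), every sequence of classes arises in this way. Under this correspondence, an ideal $\prod_i \mathfrak{p}_i^{\alpha_i}$ is principal if and only if $\sum_i \alpha_i [\mathfrak{p}_i] = 0$ in $\Cl(\mathcal{O}_K)$, and it is non-trivial (i.e.\ strictly contained in $\mathcal{O}_K$) if and only if at least one $\alpha_i$ is nonzero.

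For the first bullet, I would argue that $(\mathfrak{p}_1\cdots\mathfrak{p}_\ell)^k$ is divisible by a non-trivial principal ideal exactly when there exist exponents $\alpha_i \in \{0,1,\ldots,k\}$, not all zero, such that $\prod_i \mathfrak{p}_i^{\alpha_i}$ is principal. Restricting to $S = \{i : \alpha_i \neq 0\}$ and using the weight $\varepsilon_i = \alpha_i \in \{1,\ldots,k\}$ on each $i \in S$, this is precisely a $\{1,\ldots,k\}$-weighted zero-sum subsequence of $[\mathfrak{p}_1],\ldots,[\mathfrak{p}_\ell]$; conversely, any such weighted subsequence reconstructs a non-trivial principal divisor of the $k$-th power. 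Taking suprema over $\ell$ on both sides yields the first statement. The second bullet is the special case $k = \exp(\Cl(\mathcal{O}_K)) - 1$, since exponents reaching $\exp(\Cl(\mathcal{O}_K))$ may be reduced modulo the exponent without altering the class sum while only splitting off an additional principal factor.

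For the third bullet, $\mathfrak{p}_1\cdots\mathfrak{p}_\ell$ is divisible by a product of two non-trivial principal ideals exactly when there exist two disjoint non-empty subsets $I_1, I_2 \subseteq \{1,\ldots,\ell\}$ such that both $\prod_{i \in I_1} \mathfrak{p}_i$ and $\prod_{i \in I_2} \mathfrak{p}_i$ are principal, which is precisely the existence of two disjoint zero-sum subsequences of $[\mathfrak{p}_1],\ldots,[\mathfrak{p}_\ell]$. The fourth bullet then follows by combining this partitioning argument with the weighted translation from the first bullet, applied to each of the two disjoint index sets simultaneously. The only real obstacle is careful bookkeeping: matching the exponent-vector formalism used for ideal factorizations with the subsequence-plus-weights formalism of the weighted Davenport constant (treating $\alpha_i = 0$ as exclusion from the chosen subsequence), and verifying that ``non-trivial principal'' corresponds exactly to non-emptiness of the relevant index sets. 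Both directions in each equivalence are constructive, so the claimed identities between suprema follow without further work.
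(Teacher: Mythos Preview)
Your proposal is correct and takes exactly the approach the paper itself takes: the paper's proof consists of the single sentence ``The proof is a straightforward adaptation of the arguments of the proof of Lemma~\ref{factorization}, taking into account the different definitions of the Davenport constants.'' You have simply spelled out that adaptation in detail, with the correct dictionary between exponent vectors on prime-ideal products and (weighted, disjoint) zero-sum subsequences of their classes.
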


\begin{proof}
The proof is a straightforward adaptation of the arguments of the proof of Lemma \ref{factorization}, taking into account the different definitions of the Davenport constants.
\end{proof}

We merely mention the $2$-wise case explicitly as the preceding section focused on this case. 

\subsection{The elementary abelian case: multiplicative action}
We are now going to focus only on the case when $ \Cl(\mathcal{O}_{K})$ is an elementary abelian group $E_{p^{hr}}$.
As in the previous section, writing $q = p^{h}$, we can consider a multiplicative action of $\Fq$ on $E_{p^{hr}}$.
Note that there is no unique way of defining such a multiplicative action (because there is no canonical isomorphism $\Fq^{r} \cong E_{p^{hr}}$).
However, our results do not depend on the choice of this isomorphism.

In this case, we have the following.

\begin{theorem}\label{thm:factorization}
Let $p$ be a prime and $h,r$ be positive integers. Let $K$ be an algebraic number field such that $\Cl(\mathcal{O}_{K}) = E_{p^{hr}}$.

The $2$-wise weighted small Davenport constant $\dd_{2}^{h}(\Cl(\mathcal{O}_{K}))$ is the largest number $\ell \in \N$ such that there exist $\ell$ prime ideals $\mathfrak{p}_{1}, \ldots, \mathfrak{p}_{\ell}$ such that any product
$$\prod_{i=1}^{\ell} \mathfrak{q}_{i},$$
where $\mathfrak{q}_{i}$ is an ideal in the class of $\varphi_{i}([\mathfrak{p}_i])$, with $\varphi_{i} \in \mathcal{Q}_{h}$, is not divisible by a product of two non-trivial principal ideals.
\end{theorem}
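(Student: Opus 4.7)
The plan is to translate the $2$-wise $\mathcal{Q}_h$-weighted Davenport condition on $\Cl(\mathcal{O}_K) = E_{p^{hr}}$ into the factorization condition of the statement, exactly paralleling the arguments used for Lemma~\ref{factorization} and Lemma~\ref{weightedfactorization2}. The only structural ingredients beyond unique prime factorization in the Dedekind domain $\mathcal{O}_K$ are (i) the classical fact that every class in $\Cl(\mathcal{O}_K)$ contains infinitely many pairwise distinct prime ideals, so that any sequence $(a_1,\ldots,a_\ell)$ in the class group can be realized as $a_i = [\mathfrak{p}_i]$ with the $\mathfrak{p}_i$ distinct primes, and (ii) the convention implicit in the previous section that the weights $\varphi_i$ range over $\mathcal{Q}_h \setminus \{m_0\}$, since $m_0$ would force $\mathfrak{q}_i$ itself to be principal and trivialize the factorization condition.

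I would first show that $\dd_2^h(\Cl(\mathcal{O}_K))$ is bounded above by the integer $\ell$ described by the factorization statement. Starting from a sequence $(a_1, \ldots, a_\ell)$ with $\ell = \dd_2^h(\Cl(\mathcal{O}_K))$ admitting no two disjoint $\mathcal{Q}_h$-weighted zero-sum subsequences, realize it as classes of distinct primes $\mathfrak{p}_i$. If for some weights $\varphi_i\in\mathcal{Q}_h\setminus\{m_0\}$ and prime ideals $\mathfrak{q}_i$ in class $\varphi_i(a_i)$ the product $\prod_i \mathfrak{q}_i$ were divisible by $(x_1)(x_2)$ with both $(x_k)$ non-trivial principal, unique factorization in $\mathcal{O}_K$ would allocate each index $i$ to one of $(x_1)$, $(x_2)$, or the remainder, producing disjoint non-empty $I_1, I_2 \subseteq \{1,\ldots,\ell\}$ with $(x_k) = \prod_{i \in I_k} \mathfrak{q}_i$. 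Passing to ideal classes, $\sum_{i \in I_k} \varphi_i(a_i) = 0$ for $k = 1, 2$, contradicting the assumption on $(a_i)$.

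For the reverse inequality, I take $\ell$ primes $\mathfrak{p}_i$ satisfying the factorization condition and prove that $(a_i) := ([\mathfrak{p}_i])$ admits no two disjoint $\mathcal{Q}_h$-weighted zero-sum subsequences. Given disjoint $I_1, I_2$ and weights $(\varphi_i^{(k)})_{i\in I_k}$ witnessing such subsequences, I glue them into a global weight assignment by setting $\varphi_i = \varphi_i^{(k)}$ on $I_k$ and $\varphi_i$ to any nonzero scalar outside $I_1 \cup I_2$, and pick any prime $\mathfrak{q}_i$ in class $\varphi_i(a_i)$. Each product $\prod_{i \in I_k} \mathfrak{q}_i$ has trivial class and hence equals a non-trivial principal ideal $(x_k)$. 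The resulting identity $\prod_i \mathfrak{q}_i = (x_1)(x_2) \prod_{i \notin I_1 \cup I_2} \mathfrak{q}_i$ contradicts the factorization hypothesis, giving $\ell \leq \dd_2^h(\Cl(\mathcal{O}_K))$.

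The step deserving the most care is matching the index-disjointness built into the definition of two disjoint weighted zero-sum subsequences with the slot-disjointness arising from ideal divisibility when several of the $\mathfrak{q}_i$ happen to coincide as prime ideals. Because the product $\prod_i \mathfrak{q}_i$ is indexed by $\{1,\ldots,\ell\}$, its unique factorization carries one slot per index even when primes repeat, and this per-index accounting lets every prime factor of $(x_1)(x_2)$ be attributed to a unique $i$. Primeness of the $\mathfrak{q}_i$ is essential here: allowing a general ideal $\mathfrak{q}_i$ (for example a prime times a principal ideal) would create spurious principal divisors with no counterpart in the weighted zero-sum structure of the $(a_i)$, and this is why the theorem is phrased with prime ideal representatives of the classes $\varphi_i([\mathfrak{p}_i])$.
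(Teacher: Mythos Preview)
Your proposal is correct and follows exactly the approach the paper indicates: the paper's own proof consists of the single sentence ``Again, the proof is an adaptation of the Lemma~\ref{factorization}'s proof to the definition of $2$-wise weighted Davenport constant,'' and you have carried out precisely that adaptation. Your explicit handling of the conventions (excluding the zero weight $m_0$, taking prime representatives $\mathfrak{q}_i$, and the index-versus-prime disjointness issue) is more careful than what the paper spells out, but entirely in the spirit of the intended argument.
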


\begin{proof}
Again, the proof is an adaptation of the Lemma \ref{factorization}'s proof to the definition of $2$-wise weighted Davenport constant.
\end{proof}

\begin{remark}
It is quite remarkable that the above property does not depend on the chosen multiplicative action. It would be nice to have a general number-theoretical interpretation of such an invariant. At the end of this section, we will illustrate a link to the Galois action, which holds in some particular cases.
\end{remark}

Note that it is not known if every abelian group is the class group of the ring of integers of a number field. However, it is well-established that all (finite) abelian groups are the ideal class group of some Dedekind ring, as proved in \cite{Claborn1966EveryAG}. 
For illustrative purposes of the results above, we use the explicit construction presented in \cite[Theorem~2]{gerth1975number}, as well as explicit calculation in {\sc Magma}, to provide the following examples.

\begin{example}\label{exquad}
Let 
$$\alpha = 5\cdot 13\cdot 41\cdot 29\cdot 61$$
and $K = \Q(\sqrt{\alpha})$. We have that $\Cl(\mathcal{O}_{K})\cong E_{16}$. By Example \ref{exa:16} we know that $\dd_2(E_{16})=\D_2(E_{16})-1=7$, and $\dd_2^2(E_{16})=\D_2^2(E_{16})-1 = 5$.
Hence there exist $7$ prime ideals $\mathfrak{p}_1,\ldots,\mathfrak{p}_7$ such that their product is not divisible by a product of two non-trivial principal ideals and $5$ prime ideals $\mathfrak{p}_1,\ldots,\mathfrak{p}_5$ such that any product
$$\prod_{i=1}^{5} \mathfrak{q}_{i},$$
where $\mathfrak{q}_{i}$ is an ideal in the class of $\varphi_{i}([\mathfrak{p}_i])$, with $\varphi_{i} \in \mathcal{Q}_{2}$, is not divisible by a product of two non-trivial principal ideals.
\end{example}

\begin{example}
Let 
$$\alpha = 316861\cdot451897\cdot455333\cdot476977\cdot490549\cdot523793\cdot560641\cdot724481\cdot736993\cdot828829\cdot916621$$
and $K = \Q(\sqrt{\alpha})$. We have that $\Cl(\mathcal{O}_{K})\cong E_{1024}$. By Example \ref{exa:1024} we know that $\dd_2(E_{1024})=\D_2(E_{1024})-1=16$ and $\dd^2_2(E_{1024})=D^2_2(E_{1024})-1 = 10$.

Hence there exist $16$ prime ideals $\mathfrak{p}_1,\ldots,\mathfrak{p}_{16}$ such that their product is not divisible by a product of two non-trivial principal ideals and $10$ prime ideals $\mathfrak{p}_1,\ldots,\mathfrak{p}_{10}$ such that any product
$$\prod_{i=1}^{10} \mathfrak{q}_{i},$$
where $\mathfrak{q}_{i}$ is an ideal in the class of $\varphi_{i}([\mathfrak{p}_i])$, with $\varphi_{i} \in \mathcal{Q}_{2}$, is not divisible by a product of two non-trivial principal ideals.
\end{example}

\medskip

\subsection{The elementary abelian case: Galois group action}
By recent work \cite[Theorem 7.1]{boukheche2022monoids} it is known that the monoids of norms of rings of algebraic integers of Galois number fields admit a transfer homomorphism to monoids of weighted zero-sum sequences where the weights correspond to the elements of the Galois group. For a generalization, see \cite{geroldinger2022monoids}. Therefore studying weighted zero-sum problems over class groups with weights corresponding to the action of the Galois group has an immediate motivation from the point of view of factorization theory.

\medskip

It is well-known that the Galois group defines an action on the ideal class group (see Hilbert's Ramification Theory \cite[Chapter 1,\S 9]{neukirch2013algebraic}). It is then interesting to determine when this action is the same as that of $\mathcal{Q}_{h}$ defined above. This will give a natural interpretation of our definition of $\mathcal{Q}_{h}$ and our notion of generalized weights. Recall that the action of $\mathcal{Q}_{h}$ is the same as the multiplicative action of $\Fq$ on $E_{p^{hr}}$. The multiplicative group of $\Fq$ is the cyclic group $C_{q-1}$, and the orbits of the scalar action of $\Fq$ on $\Fq^{r}$ all have size $q-1$, that is the action is free on $\Fq^{r}\setminus\{0\}$. The following theorem shows that this is actually also a sufficient condition.

\begin{theorem}\label{thm:galois}
Let $p$ be a prime, and let $h$ and $r$ be positive integers, and let $q=p^h$. Let $K$ be a number field, with Galois group $\text{Gal}(K/\Q) = C_{q-1}$ and ideal class group $\Cl(\mathcal{O}_{K}) = E_{p^{hr}}$. If the action of ${\rm Gal}(K/\Q)$ is free on ${\rm Cl}(\mathcal{O}_K)\setminus\{0\}$, then there exists an isomorphism $\varphi: E_{p^{hr}} \rightarrow \Fq^{r}$ such that the action of $\text{Gal}(K/\Q)$ on the class group is the same as that of $\mathcal{Q}_{h}$.
\end{theorem}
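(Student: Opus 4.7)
The plan is to equip $V = \Cl(\mathcal{O}_K)$ with an $\F_q$-vector space structure compatible with the Galois action, and then produce $\varphi$ by choosing an $\F_q$-basis. Fix a generator $\sigma$ of $\mathrm{Gal}(K/\Q) = C_{q-1}$ and let $T \in \mathrm{End}_{\F_p}(V)$ be the $\F_p$-linear automorphism it induces; then $T$ has order dividing $q-1$, and $V$ is a semisimple $\F_p[C_{q-1}]$-module by Maschke's theorem, since $\gcd(p, q-1) = 1$.

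First I translate the freeness hypothesis into linear algebra. Freeness on $V \setminus \{0\}$ is equivalent to the invertibility of $T^i - I$ for every $1 \leq i \leq q-2$, which forces every eigenvalue of $T$ in $\overline{\F_p}$ to have multiplicative order exactly $q-1$. Such primitive $(q-1)$-th roots of unity lie in $\F_q = \F_{p^h}$ and in no proper subfield (the order $q-1 = p^h - 1$ fails to divide $p^k - 1$ for $k < h$), so each has minimal polynomial of degree exactly $h$ over $\F_p$. Consequently, the minimal polynomial $m_T(x)$ divides the cyclotomic polynomial $\Phi_{q-1}(x) \in \F_p[x]$, and all its irreducible factors have degree $h$.

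The decisive step is to show that $m_T(x)$ is itself irreducible, equivalently that $V$ is an isotypic $\F_p[C_{q-1}]$-module. Granting this, the subring $\F_p[T] \subseteq \mathrm{End}_{\F_p}(V)$ is a field isomorphic to $\F_p[x]/(m_T(x)) \cong \F_q$, and $V$ thereby inherits the structure of an $\F_q$-vector space on which $T$ acts as multiplication by a primitive element $\alpha \in \F_q^\ast$ (the image of $T$). Since $|V| = q^r$, we have $\dim_{\F_q}(V) = r$; choosing any $\F_q$-basis yields a group isomorphism $\varphi : V \to \F_q^r$ for which $T$ corresponds to $m_\alpha \in \mathcal{Q}_h$, and the image of $\mathrm{Gal}(K/\Q)$ in $\mathrm{Aut}(V)$ therefore coincides, under $\varphi$, with $\langle m_\alpha \rangle = \mathcal{Q}_h \setminus \{m_0\} \cong \F_q^\ast$.

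The main obstacle is proving irreducibility of $m_T$. At the purely module-theoretic level, $V$ could split as a direct sum of simple summands indexed by distinct Frobenius orbits of primitive $(q-1)$-th roots of unity, and the resulting action would still be free on $V \setminus \{0\}$; so freeness alone, applied formally, does not force a single simple type. I expect the resolution to rely on arithmetic input specific to class groups of Galois number fields — most plausibly an application of Hilbert's ramification theory to the decomposition groups at the primes of $\mathcal{O}_K$, combined with the fact that the classes of prime ideals generate $V$, to exhibit a class whose Galois orbit determines a single simple constituent; or, failing that, a Galois-equivariant structural result particular to the elementary abelian class group setting that rules out mixed simple types. Once the isotypic property is secured, the remainder of the argument follows the construction outlined above.
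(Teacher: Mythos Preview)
Your strategy—view $V=\Cl(\mathcal{O}_K)$ as an $\F_p[C_{q-1}]$-module via a generator $\sigma$, use freeness to force every eigenvalue of $T$ to be a primitive $(q-1)$-th root of unity, then upgrade $V$ to an $\F_q$-vector space through $\F_p[T]$ and pick a basis—is exactly the paper's approach. The paper likewise reduces everything to the claim $\F_p[\sigma]\cong\F_q$, which it justifies in a single sentence: ``Since $x^q-x=0$ annihilates $\sigma$ and $\sigma$ has order $q-1$, the ring of endomorphisms $\F_p[\sigma]$ is isomorphic to $\F_q$.'' It then builds the $\F_q$-basis just as you do, by choosing $v_1\neq 0$, identifying $\{0\}\cup\omega(v_1)$ with $\F_q$, choosing $v_2$ outside, and iterating.

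So the paper does \emph{not} invoke the arithmetic input you anticipate (ramification theory, decomposition groups, Galois-equivariant structure of the class group). It treats the irreducibility of $m_T$ as a consequence of the annihilating polynomial together with the order condition. Your concern is well-founded at the abstract module-theoretic level: whenever $\phi(q-1)>h$—already for $q=8$, where $\Phi_7(x)$ splits over $\F_2$ into two distinct irreducible cubics—one can assemble an $\F_p[C_{q-1}]$-module from two non-isomorphic simple summands; the action is still free on nonzero vectors, yet $\F_p[T]\cong\F_q\times\F_q$ and no $\F_q$-structure on $V$ makes $T$ a scalar. In short, your proof sketch matches the paper's line for line up to the step you flag as the main obstacle; the paper resolves that step by assertion rather than by a further number-theoretic argument, so you should not expect to find the missing piece there.
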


\begin{proof}
First note that the action of the Galois group on the ideal class group preserves the group addition of $\Cl(\mathcal{O}_{K})$.

Let $\sigma$ be a generator of the Galois group. Observe that $E_{p^{hr}} \cong \mathbb{F}_{p}^{hr}$ as an $\F_p$-vector space.  It is clear that $\sigma$ corresponds to an endomorphism of $\mathbb{F}_{p}^{hr}$, which we also write $\sigma$. Since $x^q-x=0$ annihilates $\sigma$ and $\sigma$ has order $q-1$, the ring of endomorphisms $\mathbb{F}_{p}[\sigma]$ is isomorphic to $\F_q$.

Since the orbit of every non-zero element $v$ has order $q-1$, 
the map $$f(\sigma)=f_0+f_1\sigma+\ldots+f_{h-1}\sigma^{h-1}\mapsto f(\sigma)(v)=f_0v+f_1\sigma(v)+\ldots+f_{h-1}\sigma^{h-1}(v)$$ is a bijection between $\mathbb{F}_{p}[\sigma]$ and $\{0\}\cup \omega(v)$, which is also an isomorphism of $\F_p$-vector spaces. Via this map, we may endow $\{0\}\cup \omega(v)$ with a multiplicative structure that makes it isomorphic to $\F_q$.

Now, let $v_1$ be a non-zero element in $\Cl(\mathcal{O}_{K})$. Consider a non zero element $v_{2} \in \Cl(\mathcal{O}_{K}) \setminus  \omega(v_{1})$. As we said, both sets $\{0\} \cup \omega(v_{1})$ and $\{0\} \cup \omega(v_{2})$ are isomorphic to $\F_q$. The orbits are disjoint. Hence $W=\langle \omega(v_{1}), \omega(v_{2})\rangle \cong \Fq^{2}$. Moreover, $W$ is stable under the action of the Galois group. Now, we can continue by taking a nonzero element outside $W$ and so on, until getting 
$r$ elements, say $v_1,\ldots,v_r$. In this way, we have $\langle \omega(v_{1}), \ldots, \omega(v_{r})\rangle \cong \Fq^{r}$.
\end{proof}

\begin{remark}\label{rmk:mersenne}
If $p=2$ and $q-1=2^h-1$ is a Mersenne prime, then the action of ${\rm Gal}(K/\Q)$ is free on ${\rm Cl}(\mathcal{O}_K)\setminus\{0\}$. Actually, it is well-known that any prime $\ell\in \Z$ yields the following decomposition:
$$(\ell)=\left(\mathfrak{p}_1\cdots \mathfrak{p}_r\right)^e$$
and $er$ divides $q-1$. Hence, either $e=1$ and $r=q-1$, so that $\ell$ splits completely, or $e=q-1$ and $r=1$, so that $\ell$ is totally ramified, or $e=r=1$, so that $\ell$ is inert. Note in particular that when $(\ell)=(\mathfrak{p})^{q-1}$ (that is when $\ell$ is totally ramified), the ideal $\mathfrak{p}$ is principal. 
\end{remark}

To the best of our knowledge, it is unknown in general for which values of $p, h, r$ a number field satisfying all the hypotheses of Theorem \ref{thm:galois} exists. However, the following is an example, in the easiest case discussed in Remark \ref{rmk:mersenne}.

\begin{example}
Let $p(x) = x^{3} - x^{2} - 2562x + 48969$ and let $K=\Q[\alpha]$ be the cubic number field obtained extending $\Q$ with a root of $p(x)$. We have that 
$${\rm Gal}(K/\Q) = C_{3} \text{ and }\Cl(\mathcal{O}_{K})= E_{16}.$$
Moreover, $q-1=3$ is a Mersenne prime.
In this case, as in the Example \ref{exquad}, there exist $7$ prime ideals $\mathfrak{p}_1,\ldots,\mathfrak{p}_7$ such that their product is not divisible by a product of two non-trivial principal ideals and $5$ prime ideals $\mathfrak{p}_1,\ldots,\mathfrak{p}_5$ such that any product
$$\prod_{i=1}^{5} \mathfrak{q}_{i},$$
where $\mathfrak{q}_{i}$ is an ideal in the class of $\sigma([\mathfrak{p}_i])$, with $\sigma \in {\rm Gal}(K/\Q)$, is not divisible by a product of two non-trivial principal ideals. 
\end{example}

We conclude the paper with the following remark, which opens new perspectives for future research.

\begin{remark}    
There are number fields satisfying the hypotheses of Theorem \ref{thm:galois} which are not of prime degree, as in Remark \ref{rmk:mersenne}. For example, let $K=\Q[\alpha]$ where
$$\alpha^6 - \alpha^5 + 22\alpha^4 + 11\alpha^3 + 1038\alpha^2 - 1993\alpha + 16649=0.$$ In this case, 
$${\rm Gal}(K/\Q) = C_{6} \text{ and }\Cl(\mathcal{O}_{K})= E_{49}.$$
The action of ${\rm Gal}(K/\Q)$ is free on ${\rm Cl}(\mathcal{O}_K)\setminus\{0\}$ and the $8$ orbits of order $6$ are those of the following $8$ classes: $[\mathfrak{p}_\ell]$ where $\ell\in \{47,59,107,127,131,151, 173, 193\}$ and $\mathfrak{p}_\ell$ is a factor of $(\ell)$. 

It would certainly be interesting to further investigate fields that satisfy the hypotheses of Theorem \ref{thm:galois}, as well as to develop the coding-theoretical implications of a non-free action. This is certainly beyond the scope of the present paper, and it may be an interesting topic for  future researches.
\end{remark}

\bigskip

\noindent \textbf{Acknowledgements.} The three authors are partially supported by the ANR-21-CE39-0009 - BARRACUDA (French \emph{Agence Nationale de la Recherche}).
They would like to thank  Daniele Bartoli, Julien Lavauzelle and Alessandro Neri for the fruitful discussions on the topic and their insightful advice. The first and the third author would also like to warmly thank Inria GRACE team for hospitality and excellent working conditions, while this paper has mainly been written.

\medskip

\bibliography{articles}
\bibliographystyle{abbrv}

\end{document}